 \numberwithin{equation}{subsection}
\newtheorem{theorem}{Theorem}[subsection]
\newtheorem{lemma}[theorem]{Lemma}
\newtheorem{corollary}[theorem]{Corollary}
\newtheorem{proposition}[theorem]{Proposition}
\newtheorem{thm-intro}{Theorem}
\newtheorem{ap-theorem}{Theorem}
\newtheorem{ap-corollary}{Corollary}
\newtheorem{ap-lemma}{Lemma}
\newtheorem{ap-proposition}{Proposition}
\newtheorem{ap-definition}{Definition}
\theoremstyle{definition}
\newtheorem{remark}[theorem]{Remark}
\newtheorem{example}[theorem]{Example}
\newtheorem{notation}[theorem]{Notation}
\newtheorem{definition}[theorem]{Definition}
\newtheorem{construction}[theorem]{Construction}
\newtheorem{warning}[theorem]{Warning}
\newcommand{\cXysquare}[8]{
\[\cXymatrix{
#1 \ar@{#5}[r] \ar@{#6}[d] & #2 \ar@{#7}[d]\\
#3 \ar@{#8}[r] & #4
}\]
}
\def\lim{\mathrm{lim}}
\DeclareRobustCommand{\SkipTocEntry}[5]{}
\newcommand{\GLn}{\mathrm{GL}_n}
\newcommand{\anGLn}{\mathbf{GL}_n^\an}
\newcommand{\fib}{\mathrm{fib}}
\newcommand{\dLocSys}{\mathrm R \mathrm{LocSys}_{p, n}}
\newcommand{\LocSys}{\mathrm{LocSys}_{p ,n}}
\newcommand{\LocSysfr}{\LocSys^{\square}}
\newcommand{\fr}{\widehat{\rmF}_r}
\newcommand{\id}{\mathrm{id}}
\newcommand{\Idem}{\mathrm{Idem}}
\newcommand{\cofib}{\mathrm{cofib}}
\newcommand{\sm}{\mathrm{sm}}
\newcommand{\nil}{\mathrm{nil}}
\newcommand{\kc}{\cO_k}
\newcommand{\triv}{\mathbf{triv}}
\newcommand{\Map}{\mathrm{Map}}
\newcommand{\Hom}{\mathrm{Hom}}
\newcommand{\ad}{\mathrm{ad}}
\newcommand{\Ok}{\cO_k}
\DeclareMathOperator{\Spec}{Spec}
\DeclareMathOperator{\Spf}{Spf}
\DeclareMathOperator{\Sp}{Sp}
\newcommand{\st}{\mathrm{st}}
\newcommand{\trun}{\mathrm{t}}
\newcommand{\cEnd}{\mathbf{End}}
\newcommand{\Mat}{\mathrm{Mat}}
\newcommand{\Mon}{\mathrm{Mon}}
\newcommand{\dSt}{\mathrm{dSt}}
\newcommand{\fc}{\mathrm{fc}}
\newcommand{\Sh}{\mathrm{Sh}}
\newcommand{\Ad}{\mathrm{Ad}}
\newcommand{\cont}{\mathrm{cont}}
\newcommand{\CAlg}{\mathcal{C}\mathrm{Alg}}
\newcommand{\adCAlg}{\cC \mathrm{Alg}^{\ad}_{\Ok}}
\newcommand{\Cat}{\mathcal{C}\mathrm{at}_\infty}
\newcommand{\Coh}{\mathrm{Coh}^+}
\newcommand{\op}{\mathrm{op}}
\newcommand{\Shv}{\mathrm{Shv}}
\newcommand{\ind}{\mathrm{Ind}}
\newcommand{\pro}{\mathrm{Pro}}
\newcommand{\Perf}{\mathrm{Perf}}
\newcommand{\infcat}{$\infty$-category\xspace}
\newcommand{\infcats}{$\infty$-categories\xspace}
\newcommand{\PerfSys}{\mathrm{PerfSys}_{p}}
\newcommand{\bPerfSys}{\mathbf{PerfSys}_{p}}
\newcommand{\et}{\text{\'et}}
\newcommand{\emphet}{\emph{\text{\'et}}}
\newcommand{\functor}{(-)}
\newcommand{\rigg}{(-)^{\mathrm{rig}}}
\newcommand{\rig}{\mathrm{rig}}
\newcommand{\Fun}{\mathrm{Fun}}
\newcommand{\Mod}{\mathrm{Mod}}
\newcommand{\St}{\mathrm{St}}
\newcommand{\Afd}{\mathrm{Afd}}
\newcommand{\An}{\mathrm{An}}
\newcommand{\dAfd}{\mathrm{dAfd}}
\newcommand{\dAn}{\mathrm{dAn}}
\newcommand{\an}{\mathrm{an}}
\DeclareMathOperator*{\colim}{colim}
\newcommand{\rmB}{\mathrm{B}}
\newcommand{\rmF}{\mathrm F}
\newcommand{\rmH}{\mathrm H}
\newcommand{\rmM}{\mathrm M}
\newcommand{\rmP}{\mathrm P}
\newcommand{\rmR}{\mathrm R}
\newcommand{\bA}{\mathbb A}
\newcommand{\bE}{\mathbb E}
\newcommand{\bL}{\mathbb L}
\newcommand{\bQ}{\mathbb Q}
\newcommand{\bR}{\mathbb{R}}
\newcommand{\bT}{\mathbb T}
\newcommand{\bZ}{\mathbb Z}
\newcommand{\cC}{\mathcal C}
\newcommand{\cD}{\mathcal D}
\newcommand{\cF}{\mathcal{F}}
\newcommand{\cO}{\mathcal{O}}
\newcommand{\cT}{\mathcal{T}}
\newcommand{\cV}{\mathcal{V}}
\newcommand{\cX}{\mathcal X}
\newcommand{\cY}{\mathcal Y}
\newcommand{\cS}{\mathcal S}
\author{Jorge Ant\'onio}
\date{}
\newcommand{\adress}{{
  \bigskip
  \footnotesize

\textsc{Jorge Ant\'onio,  IRMA, UMR 7501
 7 rue René-Descartes
 67084 Strasbourg Cedex}\par\nopagebreak
  \textit{E-mail address}:  \texttt{antonio@math.unistra.fr}

}}
\title{Moduli of $p$-adic representations of a profinite group} 
\begin{document}

\maketitle

\begin{abstract} Let $X$ be a smooth and proper scheme over an algebraically closed field.
The purpose of the current text is twofold. First, we construct the moduli stack parametrizing rank $n$ continuous $p$-adic representations of the \'etale fundamental group $\pi_1^\et(X)$. Our construction realizes such object as a $\bQ_p$-analytic stack, denoted $\LocSys(X)$.
Secondly, we prove that $\LocSys(X)$ admits a canonical derived structure. This derived structure allow us to intrinsically recover the deformation theory of continuous $p$-adic representations, studied in \cite{galatius2018derived}.
Our proof of geometricity of $\LocSys(X)$ uses in an essential way the $\bQ_p$-analytic analogue of Lurie-Artin representability,
proved in \cite{porta2017representability}.
\end{abstract}

\tableofcontents

\section{Introduction}
\subsection{Main results} The study of continuous $p$-adic representations of \'etale fundamental groups has been a central subject in both number theory and algebraic geometry. In \cite{mazur1989deforming}, B. Mazur first studied the
obstruction theory of continuous $p$-adic representations of absolute Galois groups, bringing the techniques of deformation theory into the realm of number theory. More recently, S. Galatius and A. Venkatesh generalized much of Mazur's work by
studying the (derived) deformation theory of continuous representations of absolute Galois groups, see \cite{galatius2018derived}. Nonetheless, a construction of a global moduli space parameterizing continuous representations of \'etale fundamental
groups was still lacking. The existence of such object and its role in counting arithmetic $\ell$-adic sheaves on smooth and proper schemes, in positive characteristic, was conjectured by P. Deligne, in \cite{deligne2015comptage}. 

Let $X$ be a smooth and proper scheme over an algebraically closed field. The goal of the present text is to construct the moduli space, envisioned by P. Deligne, parameterizing $p$-adic \'etale lisse sheaves on $X$ and study its corresponding deformation theory. More precisely,
we will construct a functor
	\[
		\LocSys(X) \colon \Afd_{\bQ_p}^\op \to \mathrm{Grpd},
	\]
where $\Afd_{\bQ_p}$ denotes the category of $\bQ_p$-affinoid spaces and $\mathrm{Grpd}$ the category of groupoids, given by the formula
	\[
		A \in \Afd_k^\op \mapsto  \LocSys(X)(A) \in \mathrm{Grpd}.
	\]
Here $\LocSys(X)(A)$ denotes the groupoid of conjugation classes of continuous group homomorphisms
	\[
		\rho \colon\pi_1^\et(X) \to \GLn(A).
	\]
We endow $\GLn(A)$ with the topology induced by the non-archimedean topology on $A \in \Afd_k^\op$. Our first main result is the following:

\begin{theorem}[\cref{thm:Theorem_1}, \cref{scholze&bhatt}, \cref{loc_vs_rep}]
The moduli functor 
	\[
		\LocSys(X) \colon \Afd_{\bQ_p}^\op \to \mathrm{Grpd},
	\]
is representable by a $\bQ_p$-analytic stack. More precisely, there exists a $k$-analytic space $\LocSysfr(X) \in \An_k$ together with a canonical smooth map
	\[
		\pi \colon \LocSysfr(X) \to \LocSys(X),
	\]
which exhibits $\LocSysfr(X) $ as a smooth atlas for $\LocSys(X)$. More concretely, $\pi$ exhibits $\LocSys(X)$ as the $\bQ_p$-analytic quotient stack of $\LocSysfr(X)$ under the conjugation action of $\anGLn$ on $\LocSysfr(X)$. Moreover,
the groupoid
	\[\LocSys(X)(A) \in \mathrm{Grpd},\]
can be naturally identified with the groupoid of rank $n$ pro-\'etale $A$-local systems on $X$.
\end{theorem}

The $\bQ_p$-analytic space $\LocSysfr(X)$ is first constructed via its functor of points: 
	\begin{equation} \label{first}
		\LocSysfr(X) \colon A \in \Afd_{\bQ_p}^\op \mapsto \Hom_{\cont} \big( \pi_1^\et(X), \GLn(A) \big) \in \mathrm{Set}.
	\end{equation}
The proof that the above formula defines a functor which is representable by a $\bQ_p$-analytic space will occupy the first part of \S 2. Our proof is inspired by the complex case, see \cite{Simpson_Non_abelian_HT_II}.

Nonetheless, the fact that we have to keep track of both the profinite and $\bQ_p$-analytic topologies on $\pi_1^\et(X)$ and $\GLn(A)$, respectively, renders our arguments considerably more involved. 
We will exploit, at length, the fact that the $
\bQ_p$-analytic topology on the group $\anGLn(A)$, $A \in \Afd_{\bQ_p}$, admits a natural description in terms of ind-pro objects. Indeed, the pro-structure
arises from the choice of a formal model $A_0$ for $A$. The ind-structure is induced by the formula,
	\begin{align*}
		A & \simeq A_0 \otimes_{\bZ_p} \bQ_p \\
		   & \simeq \colim_{\textrm{mult by p}} A_0.
	\end{align*}
The second half of \S 2 is devoted to present the theory of $\bQ_p$-analytic stacks and show that $\LocSys(X)$ can be identified with the $\bQ_p$-analytic stack quotient of $\LocSysfr(X) $, under the conjugation action of $\anGLn$. 

We will further use the
the results of \cite{bhatt2013pro}, to naturally identify families of points of $\LocSys(X)$ with rank $n$ pro-\'etale local systems on $X$.

Having constructed the global moduli space $\LocSys(X)$, we then proceed to study the corresponding deformation theory. In order to do so, we will show that $\LocSys(X)$ admits a canonical derived enhancement. More precisely, we prove:

\begin{theorem} \label{intro_thm:derived_enhancement} Let $\dAfd_{\mathbb{Q}_p}$ denote the \infcat of derived $\mathbb{Q}_p$-affinoid spaces, introduced in \cite[7.3]{porta2016derived}. We have the following:
	\begin{enumerate}
	\item (\cref{t1}) The $\bQ_p$-analytic stack 
		\[
			\LocSys(X) \colon \Afd_{\bQ_p}^\op \to \mathrm{Grpd}
		\]
	admits a natural derived enhancement
		\[
			\dLocSys(X) \colon \dAfd_{\bQ_p}^\op \to \cS,
		\]
	where $\cS$ denotes the \infcat of spaces. 
	\item (\cref{computation} and \cref{perfect_etale}) The derived moduli stack $\dLocSys(X)$ admits a global analytic cotangent complex, denoted $\bL^\an_{\dLocSys(X)}$. Given a continuous representation
		\[
			\rho \colon  \pi_1^\emph{\et}(X) \to \GLn(\overline{\bQ}_p),
		\]
	we can naturally identify the analytic cotangent complex at $\rho$ with
		\[
			\bL^\an_{\dLocSys(X), \rho } \simeq C^*_\emphet \big(X, \Ad(\rho ) \big)^\vee[-1] \in \Mod_{\overline{\bQ}_p},
		\]
	where $\Mod_{\overline{\bQ}_p}$ denotes the stable \infcat of $\overline{\bQ}_p$-modules, and $C^*_\emphet \big(X, \Ad(\rho ) \big)^\vee$ denotes the dual of the complex of global \'etale cochains on the associated adjoint representation
		\[
			\Ad( \rho ) \coloneqq \rho \otimes \rho^\vee,
		\]
	regarded naturallly as a pro-\'etale local system on $X$.
	\item (\cref{thm:final_thm}) Moreover, the functor 
	\[
		\dLocSys(X) \colon \dAfd_{\bQ_p}^\op \to \cS
	\]
is representable by a derived (geometric) $\bQ_p$-analytic stack whose $0$-truncation canonically agrees with $\LocSys(X)$.
	\end{enumerate}
\end{theorem}

We observe that the computation of the analytic cotangent complex, $\bL^\an_{\dLocSys(X)}$, provided in \cref{intro_thm:derived_enhancement} is not surprising and matches the complex analytic case, see \cite[Derived mapping stacks, p.41]{toen2006higher}.

A main difficulty in proving \cref{intro_thm:derived_enhancement} is to extend the definition of continuity to derived coefficients. We first observe that given $Z \in \dAfd_{\bQ_p}$, a derived $\bQ_p$-affinoid space,
the derived ring of global sections
	\[
		A \coloneqq \rmR \Gamma(Z, \cO_Z) \in \CAlg_{\bQ_p},	
	\]
admits higher homotopy groups. Since the pro-\'etale fundamental group, $\pi_1^\et(X)$ only classifies pro-\'etale local systems on $X$ with discrete coefficients we will need to replace it with
the pro-\'etale homotopy type, which we denote by $\Sh^\et(X)$. The latter is naturally an object in the \infcat of profinite spaces, denoted $\pro^{\mathrm{fc}}(\cS)$, cf. \cite[Theorem 3.6.5]{2009derived}.

A more serious problem is the fact that the object $A \in \CAlg_{\bQ_p}$ does not come equipped with any meaningful additional \emph{topological structure}. Therefore, we have to define a reasonable notion of a \emph{derived
continuous representation}
	\[
		\rho \colon \Sh^\et(X) \to \rmB \GLn(A).
	\]	
For this purpose, we will consider the object $\rmB \GLn(A)$ roughly as an object naturally enriched in the \infcat of ind-pro-spaces, $\ind(\pro(\cS))$. We then define a derived continuous representation
	\[
		\rho \colon  \Sh^\et(X) \to \rmB \GLn(A),	
	\]
as a morphism of monoid-like objects enriched in $\ind(\pro(\cS))$. 

In order to make the above heuristics into valid statements we need to study in detail the theory of $\ind(\pro(\cS))$-enriched \infcats. This is the main bulk of the second part of the paper.
In \S 
3, we study the theory of enriched \infcats. Our main example of application corresponds to the \infcat of perfect complexes over $A$. After having constructed the required enrichment on $\Perf(A)$, we continue with our study of derived continuous representations in \S 4. We will prove a
main ingredient, of independent interest, namely a derived analogue of the well-known result that a continuous representation
	\[
		\rho \colon \pi_1^\et(X) \to \GLn(E),	
	\]
where $E$ is a finite field extension of $\bQ_p$, can be lifted (up to conjugation) to an integral continuous representation,
	\[
		\rho' \colon \pi_1^\et(X) \to \GLn(\cO_E).
	\]
The reader can find more details in \cref{homotopy1}. Another crucial ingredient for us is the derived Raynaud localization theorem, cf. \cite[Theorem 4.4.10]{antonio2018p}.
Finally, in \S 5, we recall the non-archimedean analogue of the Artin-Lurie representability theorem, proved in \cite[Theorem 7.1]{porta2017representability}. We will then proceed to study in the detail the deformation
theory of $\LocSys(X)$. We will then deduce, out of combining these ingredients together, the representability statement stated in \cref{intro_thm:derived_enhancement} (iii).

\subsection{Notations and Conventions} In the body of the text we will need to enlarge a starting Grothendieck universe, and we often omit such procedure.
Fortunately, this is innocuous for us.

\begin{enumerate} 
\item Let $k$ denote either a finite field extension or a complete algebraically closed field extension of $\bQ_p$. We denote by $\Ok$ its ring of integers and $p \in \Ok$ a uniformizer for $k$. Given $n \ge 1$, we shall denote by $\cO_{k, n}$ the reduction mod $p^n$ of the valuation ring $\Ok$.

\item We denote $\An_k$ the category of
strict $k$-analytic spaces and $\Afd_k$ the full subcategory spanned by strict $k$-affinoid spaces, see \cref{def:analytic_space}.

\item We refer the reader to  \cite[\S 2.6]{berkovich1993etale} for a definition of the Berkovich's analytification functor. We denote $\anGLn$ denotes the analytification of the usual general linear group scheme, $\GLn$, over $k$.

\item In this paper we extensively use the language of \infcats. Most of the times, we reason model independently. However, whenever one needs to argue by means of an explicit model, we will use the theory of quasi-categories developped in
\cite{lurie2009higher}. We use caligraphic letters $\cC, \ \mathcal{D}$ to denote \infcats. We denote $\Cat$ the \infcat of (small) \infcats. We further denote by $\Cat^{\mathrm{perf}}$ the \infcat of idempotent complete \infcats whose equivalences correspond
to Morita equivalences. We will denote by $\cS$ the \infcat of spaces, $\cS^\fc$ the \infcat of finite constructible spaces, cf
\cite[\S 3.1]{2009derived}.

\item Given $\cC \in \Cat$, we denote by $\ind(\cC)$ and $\pro(\cC)$ the corresponding \infcats of ind-objects and pro-objects on $\cC$, respectively. When $\cC = \cS^\fc$, the \infcat $\pro(\cS^\fc)$ is referred as the \infcat of \emph{profinite spaces}.

\item Let $\cC^\otimes$ denote a symmetric monoidal \infcat. We shall denote by $\mathbf 1_\cC$ the unit for the symmetric monoidal structure on $\cC$. When $\cC$ is clear from the context, we simply denote $\mathbf 1_\cC$ by $\mathbf 1$.

\item Let $\cV^\otimes$ denote a \emph{presentably symmetric monoidal \infcat}. We shall denote the \infcat of $\cV^\otimes$-enriched \infcats by $\Cat(\cV^\otimes)$. We will mainly follow \cite{gepner2015enriched} as the foundational framework for the theory of enriched \infcats. 

\item Let $R$ be a simplicial commutative ring, which we also refer to as \emph{derived ring}. We will denote by $\CAlg_R$ the \infcat of simplicial $R$-algebras. The latter can be realized as the \infcat associated to the usual category of simplicial $R$-algebras equipped with its usual model structure. For simplicity, we shall refer to objects of $\CAlg_R$ as \emph{derived $R$-algebras}.

\item We shall denote by $\CAlg_{\Ok}^\ad$ the \infcat of \emph{topologically almost of finite presentation} \emph{derived $\Ok$-adic algebras}. We refer the reader to \cite[Definition 3.1.5]{antonio2018p} for the definition of derived $\Ok$-adic algebra and \cite[Definition 3.2.1]{antonio2018p}
for the notion of topologically almost of finite presentation derived $\Ok$-adic algebra.

\item Let $R$ denote a derived ring. We denote $\Mod_R$ the derived $\infty$-category of $R$-modules, $\Coh(R) \subseteq \Mod_R$ the full subcategory spanned by almost perfect $R$-modules and $\Perf(R)$ the full subcategory of perfect $A$-modules.
Let $M , \ N \in \Mod_R$, we denote the (derived) tensor product by $M \otimes_R N \in \Mod_R$.

\item Given $A \in \adCAlg$, we denote by $A_n \coloneqq A \otimes_{\Ok} \cO_{k, n} \in \CAlg_{\cO_{k, n}}$. More generally, given $M \in \Mod_A$ and $n \ge 1$, we denote by $M_n \coloneqq M \otimes_A A_n$ the base change of $M$ along the canonical morphism
$\kc \to \cO_{k, n}$.

\item Let $A \in \adCAlg$. Given $M, N \in \Coh(A \otimes_{\kc} k)$ we shall usually use gothic symbols $\mathfrak M , \mathfrak N \in \Coh(A)$ to denote formal models for $M$ and $N$, respectively. More specifically, $\mathfrak M \in \Coh(A)$
satisfies $ \mathfrak M \otimes_{\kc} k \simeq M$ and similarly for $N$ and $\mathfrak N$.

\item Let $R \in \CAlg$. The \infcat of $R$-modules, $\Mod_R$, is naturally $R$-linear. For this reason, it admits a natural enrichment over $\Mod_R$, itself. Given $M, \ N \in \Mod_R$ we shall denote by 
	\[
		\underline{\Map}_{\Mod_R}(M, N) \in \Mod_R,
	\]
the \emph{mapping $R$-module}. For each $i \in \bZ$, its underlying spectrum has homotopy groups given by
	\[
		\pi_i(\underline{\Map}_{\Mod_R}(M, N) ) \simeq \pi_0(\underline{\Map}_{\Mod_R}(M, N[-i]) ).
	\]

\item In this paper we will make fundamental use of the theory of derived $k$-analytic geometry, developed in \cite{porta2016derived, porta2017representability}. We shall denote by $\dAfd_k$ the \infcat of \emph{derived $k$-afffinoid spaces}, introduced in
\cite[Definition 7.3]{porta2016derived}. Given $Z \in \dAfd_k$ a derived $k$-affinoid space we shall denote by
	\begin{align*}
		\rmR \Gamma(Z, \cO_Z)   &\coloneqq \Map_{\dAn_k}(Z, (\bA^1_k)^\an ) \\
							& \in \CAlg_k,
	\end{align*}
its derived ring of global sections, see \cite[]{porta2016derived}.
\end{enumerate}

\subsection{Acknowledgments} I would like to express my deep gratitude to my advisor B. Toen for all his advice and by sharing many ideas and suggestions concerning the contents of this text. I am grateful to M. Porta from who I learned derived non-
archimedean geometry and for useful mathematical suggestions along these last years.
I would also like to thank to M. Robalo, A. Vezzani, B. Hennion, D. Gepner, M. d'Addezio, V. Daruvar, C. Simpson, V. Melani, P. Scholze, B. Bhatt, B. Conrad, B. Stroh,  and J. Tapia for useful 
commentaries and suggestions during the elaboration of the text. The author also expresses his gratitude to the math departments of Universit\'e Paris 13 and Universit\'e de Strasbourg, where I could discuss and share many of the ideas presented in this text.

\section{Representability of the space of morphisms}
Let $G$ be a profinite group topologically of finite generation. For example, one can consider $G$ to be the \'etale fundamental group of a proper and smooth scheme over an algebraically closed field, see \cite[Thm 2.9, Expos\'e 10]{grothendieck224revetements}. We define the functor
	\[
		\LocSysfr(G) \colon \Afd_{\bQ_p}^\op \to \mathrm{Set},
	\] 
given on objects by the formula
	\[
		\Sp(A) \in \Afd_{k} \mapsto \Hom_{\cont} \big(G, \GLn(A) \big) \in \mathrm{Set}.
	\]
We denote by $\Hom_{\cont} \big(G, \GLn(A) \big)$ the set of continuous group homomorphisms
	\[
		G \to \GLn(A),
	\]
where $\GLn(A)$ is endowed with the $p$-adic topology induced by the one on the $k$-affinoid algebra $A$.
We will prove that $\LocSysfr(G)$ is representable by a $k$-analytic space, i.e. 
	\[
		\LocSysfr(X) \in \An_{k}.
	\]
The proof of representability is established first when $G$ is a free
profinite group. This is the main result of \S 2.2. The case where $G$ is merely a topologically finitely generated profinite group follows by the previous result, see \cref{pr:Hom_G}.

We will further show in \cref{thm:Theorem_1}, that $\LocSysfr(G)$ defines a smooth atlas for the moduli of continuous representations of $G$, $\LocSys(G)$.
Consequently, it follows that $\LocSys(G)$ is representable by a geometric stack with respect to the \emph{$k$-analytic geometric context}. See \cref{geometric_stack_definition} for the definition of the latter term.

\subsection{Preliminaries} We shall first make a brief review of certain basic notions of $k$-analytic geometry, that will be useful for us. 
\begin{definition}
Let $n \geq 1$ be an integer. The \emph{Tate $k$-algebra on $n$ generators with radius $(r_1, \dots, r_n)$} is defined as
	\[
		k \langle r_1^{-1} T_1, \dots, r_n^{-1} T_n \rangle := \{ 
		\sum_{ i_1, \dots, i_n } a_{i_1, \dots, i_n} T_1^{i_1} \dots T_n^{i_n} \in k [[T_1, \dots T_n ]]| \ a_{i_1, \dots, i_n} r_1^{i_1} \dots r_n^{i_n} \to 0 
		\},
	\]
whose multiplicative structure is induced by the multiplicative structure on the formal power series ring $k [[T_1, \dots T_n ]]$.
\end{definition}

\begin{definition}
A \emph{$k$-affinoid algebra} is a quotient of a Tate algebra $k \langle r_1^{-1} T_1, \dots, r_n^{-1} T_n \rangle $ by a finitely generated ideal $I$. 
\end{definition}

\begin{definition}
Let $A$ be a $k$-affinoid algebra we say that $A$ is a \emph{strict $k$-affinoid algebra}
if we can choose such a presentation for $A$ as before with all the $r_i = 1$. We denote by $\cC \Afd_k$ the category of strict $k$-affinoid algebras together with continuous $k$-algebra homomorphisms
between them.
\end{definition}

\begin{remark}The $k$-algebra $k \langle r_1^{-1} T_1, \dots, r_n^{-1} 
T_n \rangle$ admits a canonical $k$-Banach structure induced by the 
usual Gauss norm. Moreover, any finitely generated ideal $I \subset 
k \langle r_1^{-1} T_1, \dots, r_n^{-1} T_n \rangle$ is closed. As a consequence, any $k$-affinoid algebra $A$ admits a $k$-Banach
structure, depending on the choice of a presentation of $A$. 
Nonetheless it is possible to show that any two such $k$-Banach 
structures, on $A$, are equivalent. Therefore, the latter inherits a 
canonical topology, induced from the canonical topology on $k \langle r_1^{-1} 
T_1, \dots, r_n^{-1} T_n \rangle$.
\end{remark}

Strict $k$-affinoid algebras correspond to the affine objects in (rigid) $k$-analytic geometry: 

\begin{definition}The category of \emph{$k$-affinoid spaces} is defined as the opposite category
	\[
		\Afd_k \coloneqq ( \cC \Afd_k)^\op.
	\]
\end{definition} 

\begin{remark}
Let $A \in \cC \Afd_k$ denote a $k$-affinoid algebra. Given a presentation 
for $A$
	\[
		A \cong k \langle T_1, \dots, T_m \rangle / I
	\]	
one is able to construct a formal model for $A$. More precisely, we consider the $p$-adic complete $\Ok$-adic algebra
topological of finite presentation 
	\[
		A_0 \coloneqq \Ok \langle T_1, \dots T_m \rangle / I \cap \Ok  \langle T_1, \dots T_m \rangle.
	\]
It is then clear that $A_0$ satisfies 
	\[
		A \simeq A_0 \otimes_{\Ok} k.
	\]	
\end{remark}

\begin{notation} We will often denote by $A_0$ a formal model for $A$, i.e., a ($p$-adically complete) $\Ok$-algebra of topological
	finite 
	presentation such that we have an isomorphism
		\begin{align*}
			A & \simeq A_0 \otimes_{\Ok} k \\
			    & \simeq  \colim_{\textrm{mult by }p} A_0 .
		\end{align*}
\end{notation}

\begin{remark}
	The ring $A_0$, above, is an open subring of $A$. 
	For this reason, the topology of $A$ can be thought  as an ind-pro topology, in which the pro-structure comes from the fact that formal models are $p$-adically complete and the ind-structure arises after 
	localizing at $p$.
\end{remark}

\begin{definition}
Given a $k$-affinoid algebra $A$, we let $\rmM(A)$ denote
the set of semi-multiplicative seminorms on $A$. Given $x \in \rmM(A)$ we can associate to it a (closed) prime ideal of $A$. Namely, it corresponds to the kernel of the induced morphism
	\[
		x \colon A \to \bR.
	\]
The fact that it defines
a prime ideal of $A$ follows from the multiplicativity of $x \in \rmM(A)$. The elements $x \in \rmM(A)$ are usually referred to as \emph{closed points of $\Sp(A)$}.
\end{definition}

\begin{notation}
We shall denote by $\rmH(x)$ the completion of the residue field $\mathrm{Frac}( A / \mathfrak{p})$, where
$\mathfrak{p} \coloneqq \ker(x)$. The field $\rmH(x)$ possesses a canonical valuation, denoted $|\bullet|_x$, induced by the one on $A$. Given $a \in A$ we denote by $|a|_x \in 
\mathbb{R}$ the evaluation of $| \bullet |_x$ on the image of $a$ in $\rmH(x)$.
\end{notation}

\begin{definition} Let $\phi \colon A \to A'$ denote a bounded morphism of $k$-affinoid algebras. Let $x \in \rmM(A)$ denote a closed point of $\Sp(A)$. We say that $x$ is
\emph{inner with respect to $A$} if there exists a continuous surjective map 
	\[
		\varphi \colon A \langle r_1^{-1} T_1, \dots, r_n^{-1} T_n \rangle  \to A',
	\]
of $k$-affinoid algebras such that the norm on $A'$, induced by $\varphi$, agrees with the usual one and $ \vert T_i \vert_{x'} < r_i$, for each $i$. 
\end{definition}

In Berkovich's non-archimedean geometry it is possible to define the notion of relative interior, which will be very useful for us:

\begin{definition} Let $
		\phi: A \to A',
	$
denote a bounded morphism of $k$-affinoid algebras. The \emph{relative interior} of $\phi$, denoted by $\mathrm{Int}( \rmM(A')/ \rmM(A))$, is by 
definition the set of points
	\[
		\mathrm{Int} \big( \rmM(A')/ \rmM(A) \big) \coloneqq \{ x' \in \rmM(A') \vert \ A' \to \rmH(x') \text{ is inner with respect to }A \}.
	\]
\end{definition}

\begin{construction}
Let $A \in \cC \Afd_k$. Fix an $\Ok$-adic algebra, $A_0$, such that
	\[
		A_0 \otimes_{\Ok} k \simeq A.
	\]
The topology on $A_0$ admits the family $\{ p^n A_0 \}_{n \geq 1}$ as a fundamental family of open 
neighborhoods around $0 \in A_0$.
Consequently, for $m \geq 0$, we have a fundamental family of normal open subgroups 
	\[
		\mathrm{Id} + p^{m+1} \cdot \mathrm{M}_n( A_0) \unlhd \GLn(A_0),
	\]
which induce the $p$-adic topology on $\GLn(A_0)$. We have further canonical isomorphisms 
	\[
		\GLn( A_0) / \big( \mathrm{Id} + p^m \cdot M_n( A_0) \big) \simeq \GLn( A_0/p^m A_0).
	\]
Indeed, the canonical group homomorphism $g_m \colon \GLn(A_0) \to \GLn(A_0/p^m A_0)$ sends the subgroup 
	\[
		 \mathrm{Id} + p^m \cdot M_n( A_0),
	\]
to the identity on $\GLn(A_0/p^m A_0)$. Moreover, one checks directly that if $M \in \GLn(A_0)$ is such that
	\[
		g_m(M) = \mathrm{Id},
	\]
then the matrix $M- \mathrm{Id} $ is necessarily divisible by $ p^m$ component-wise. Moreover, since $A_0$ is $p$-adically complete and $\GLn(\bullet)$ commutes with limits we obtain a natural isomorphism
	\[
		\GLn(A_0) \cong \underset{m \geq 1}{\lim} \big( \GLn( A_0) / \big( \mathrm{Id} + p^m M_n( A_0) \big) \big).
	\]
The same reasoning holds for the topological group
$\mathrm{Id} + p^{m} \cdot \mathrm{M}_n( A_0)$, for $m \geq 1$. More precisely, we have isomorphisms
	\[
		\mathrm{Id} + p^{m} \cdot \mathrm{M}_n( A_0) \cong \underset{s \geq 1}{\lim } \big( \mathrm{Id} + p^{m} \cdot \mathrm{M}_n( A_0)/ ( \mathrm{Id} + p^{m + s}  \cdot \mathrm{M}_n( A_0) \big).
	\]
\end{construction}

\begin{definition} \label{def:analytic_space}
A \emph{$k$-analytic space} is defined as a locally ringed space which locally is equivalent to a $k$-affinoid space. We denote by
$\An_k$ the category of $k$-analytic spaces and morphisms between these. 
\end{definition}

One is then able to globalize most of the previous notions, in particular it is possible to give a global definition of the relative interior of a morphism between $k$-analytic spaces.
We refer the reader to \cite{berkovich1993etale}, \cite{conrad2008several} and \cite{bosch2005lectures} for a more detailed exposition on rigid geometry, from different points of view.

\subsection{Hom spaces}

\begin{definition}
Let $\mathrm{FinGrp}$ denote the category of finite groups. The category of profinite group is defined as the pro-completion of $\pro( \mathrm{FinGrp})$. 
\end{definition}

\begin{notation}
We denote by $\fr$ a fixed free profinite group of rank $r$. It can be explicitly realized as the profinite completion of a free group on $r$ generators, denoted $\rmF_r$. The latter is a dense full subgroup of $\fr$. We will thus fix throughout the text a
continuous 
dense group inclusion homomorphism $\rmF_r \to \fr$. This provide us with a set of topological generators $e_1, \dots , e_r \in \fr$.
\end{notation}

For each $r \geq 1$, the groups $\fr \in \pro(\mathrm{FinGrp})$ satisfy the universal property given by the
formula
	\[ 
		\Hom_{\pro(\mathrm{FinGrp})}(\fr, G) \cong G^r, \quad \text{for any }G \in \pro(\mathrm{FinGrp}).
	\]

\begin{notation}
Let us fix $\mathcal{J}_r$ a final family of normal open subgroups of finite index in $\fr$, i.e., such that we have a continuous group isomorphism,
	\[ 
		\lim_{ U \in \mathcal{J}_r} \fr/ U \simeq \fr.
	\]
\end{notation}

\begin{remark}
Given $U \in \mathcal{J}_r$, the quotient group 
	\[
		\fr / U \simeq \Gamma,
	\]
is finite and thus it is of finite presentation. Moreover, thanks to the Nielsen-Schreier theorem, cf. \cite[Theorem 3.3.1]{ribes2008wreath}, the group 
$U$ is topologically finitely generated free profinite by elements $\sigma_1, \dots \sigma_l \in U$. Consider furthermore the dense group inclusion homomorphism
	\[
		\rmF_r \to \fr, 
 	\] 
then $U \cap \rmF_r \to U$ is a discrete subgroup of U which is again dense in $U$. Therefore, we can assume without loss of generality that  $\sigma_1, \dots , \sigma_l \in U 
\cap  \rmF_r$. 
\end{remark}

\begin{notation} Let $A$ denote a $k$-affinoid algebra.
Let $\sigma = \prod_i e_i^{n_{j_i}} \in \fr$ be a general element of the profinite group $\fr$. Suppose furthermore we are given a continuous group morphism 
	\[
		\rho \colon \fr \to \GLn(A).
	\]
Let $M_1 \coloneqq \rho(e_1), \dots , M_r \coloneqq \rho(e_r)$.
We denote by 
	\[
		\sigma \big( M_1, \dots M_r \big) \coloneqq \prod_i M_i^{n_{j_i}} \in \GLn(A), 
	\]
whenever the right hand side is well defined, (which is always the case when the displayed product is finite).
\end{notation}

\begin{definition}
Let $U \in \mathcal{J}_r$ and fix $\{\sigma_i\}_{i=1}^l \in U \cap \fr$ a finite number of topological generators for $U$. We define the functor
	\[
		\LocSysfr(U, \{\sigma_i\}_{i=1}^l) \colon \cC \Afd_k \to \mathrm{Set},
	\] 
given on objects by the formula,
	\begin{align*}
		A & \mapsto \LocSysfr(U , \{\sigma_i\}_{i=1}^l)( A) \\
		&\coloneqq \{ (M_1, \dots, M_r) \in \mathrm{GL}_n (A)^r \colon \text{for each } i \in [1,l], \ \vert \sigma_i(M_1, \dots, M_r)- \mathrm{Id} \vert \leq \vert p \vert \}. 
	\end{align*}
When $A_0  $ is a formal model for $A$, we denote by $\LocSysfr(U, \{\sigma_i\}_{i=1}^l ) (A_0) $ the set of those $(M_1, \dots, M_r ) \in \GLn(A_0)^r$ such that
	\[
		\sigma( M_1, \dots, M_r) = \mathrm{Id}, \quad \textrm{mod } p.
	\]
\end{definition}

\begin{remark}
	By the fact that each $\sigma_i \in U \cap \fr$ it follows that these can be expressed as a finite product involving (powers) of the $e_j \in \fr.$ For this reason, given any $M_1, \dots , M_r \in \GLn(A)$, the elements
		\[
			\sigma_i(M_1, \dots, M_r) \in \GLn(A),
		\]
	are well defined.
\end{remark}

\begin{remark} \label{sdf}
Let $U \in \mathcal{J}_r$, $A \in \Afd_k^\op$ and $A_0$ be a formal model for $A$. Then the set 
	\[
		\LocSysfr ( U, \{\sigma_i\}_{i=1}^l)( A_0) \in \mathrm{Set}
	\]
does not depend on the choice of the topological generators for $U$. Indeed, if $\tau_1, \dots, \tau_m$ are different generators for $U$ we can express each $\sigma_i$ as a non-linear combination of the $\tau_1, \dots, \tau_m$, and vice-versa. Since
$\mathrm{Id} + p \cdot \rmM_n(A_0)$ is an (open) subgroup of $\GLn(A_0)$
we deduce that the elements $\{ \sigma_i(M_1, \dots, M_r) \}_{i =1}^l$ belong to $\mathrm{Id} + p \cdot \rmM_n(A_0)$ if and only if the elements in the system $\{ \tau_j(M_1, \dots, M_r) \}_{j =1 }^m$, do.
\end{remark}

\begin{notation}
Following \cref{sdf}, we will denote the set $\LocSysfr(U, \{\sigma_i\}_{i=1}^l )(A_0) $ simply by 
	\[
		\LocSysfr(U)(A_0) \in \mathrm{Set}.
	\]
\end{notation}

\begin{lemma}
Let $A \in  \cC \Afd_k$ be a $k$-affinoid algebra and $A_0$ an $\Ok$-formal model for $A$. Then, for each $r \ge 1$, there is a bijection
	\[
		\Hom_{\mathrm{cont}}\big(\fr, \GLn(A_0) \big) \cong
		\colim_{U \in \mathcal{J}_r} \LocSysfr \big( U \big)(A_0) ,
	\]
of sets.
\end{lemma}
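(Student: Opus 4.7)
The forward direction is immediate. Given a continuous $\rho \colon \fr \to \GLn(A_0)$, set $M_i \coloneqq \rho(e_i)$. Composition with the quotient $\GLn(A_0) \twoheadrightarrow \GLn(A_0/p)$ — whose target is discrete, being the quotient of a topological group by its open subgroup $\mathrm{Id} + p\,\Mat_n(A_0)$ — yields a continuous $\bar \rho \colon \fr \to \GLn(A_0/p)$ with open kernel. By cofinality of $\mathcal{J}_r$ this kernel contains some $U \in \mathcal{J}_r$, and any topological generators $\sigma_1, \dots, \sigma_l$ of $U$ then satisfy $\sigma_i(M_1, \dots, M_r) \equiv \mathrm{Id} \pmod p$, so $(M_1, \dots, M_r) \in \LocSysfr(U)(A_0)$.

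For the backward direction, given $(M_1, \dots, M_r) \in \LocSysfr(U)(A_0)$, I would first invoke \cref{sdf} together with Nielsen–Schreier, applied to the finitely generated free discrete group $U \cap \rmF_r$, to assume without loss of generality that the $\sigma_i$'s form a free basis of $U \cap \rmF_r$. Define a discrete group homomorphism $\rho^{\mathrm{disc}} \colon \rmF_r \to \GLn(A_0)$ by $e_i \mapsto M_i$. Since $\GLn(A_0) = \lim_k \GLn(A_0/p^k)$ is complete Hausdorff and $\rmF_r$ is dense in $\fr$, extending $\rho^{\mathrm{disc}}$ uniquely by continuity to a continuous $\rho \colon \fr \to \GLn(A_0)$ amounts to exhibiting, for each $k \geq 1$, an open subgroup $V_k \subseteq \fr$ with
\[
\rho^{\mathrm{disc}}(V_k \cap \rmF_r) \subseteq \mathrm{Id} + p^k \,\Mat_n(A_0).
\]

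The subgroups $V_k$ are constructed by induction on $k$. For $k = 1$ take $V_1 = U$: the $\sigma_i$'s freely generate $U \cap \rmF_r$ and each is sent into the subgroup $\mathrm{Id} + p\, \Mat_n(A_0)$, so the whole of $U \cap \rmF_r$ is. For the inductive step, observe that the successive quotients of the congruence filtration are abelian, since $(\mathrm{Id} + p^k X)(\mathrm{Id} + p^k Y) \equiv \mathrm{Id} + p^k(X + Y) \pmod{p^{k+1}}$ whenever $k \geq 1$. Consequently the composite
\[
\phi_k \colon V_k \cap \rmF_r \xrightarrow{\rho^{\mathrm{disc}}} \mathrm{Id} + p^k\, \Mat_n(A_0) \twoheadrightarrow \bigl(\mathrm{Id} + p^k\, \Mat_n(A_0)\bigr)/\bigl(\mathrm{Id} + p^{k+1}\, \Mat_n(A_0)\bigr) \simeq \Mat_n(A_0/p)
\]
is an abelian group homomorphism. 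Now $V_k \cap \rmF_r$ is finitely generated (Nielsen–Schreier once more) and the target is a module over the characteristic-$p$ ring $A_0/p$ (residue characteristic), so the image of $\phi_k$ is finitely generated and $p$-torsion, hence finite. Therefore $\ker \phi_k$ has finite index in $V_k \cap \rmF_r$, and a fortiori in $\rmF_r$. Residual finiteness of $\rmF_r$ — every finite-index subgroup of $\rmF_r$ has the form $W \cap \rmF_r$ for some open $W \subseteq \fr$ — lets me choose $V_{k+1} \in \mathcal{J}_r$ with $V_{k+1} \subseteq V_k \cap W$, so that $V_{k+1} \cap \rmF_r \subseteq \ker \phi_k$, completing the induction.

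The resulting continuous $\rho \colon \fr \to \GLn(A_0)$ satisfies $\rho(e_i) = M_i$ by construction, and the two constructions are mutually inverse by inspection; one also checks that the assignment is compatible with the transition maps $\LocSysfr(U)(A_0) \to \LocSysfr(U')(A_0)$ for $U' \subseteq U$, so the backward map factors through the colimit. The principal obstacle is the inductive step: one must exploit the abelian and $p$-torsion nature of the successive congruence quotients to force the image of $\phi_k$ to be finite, and then convert a finite-index discrete subgroup of $\rmF_r$ into an open subgroup of the profinite completion $\fr$, thereby closing the induction.
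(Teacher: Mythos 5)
Your proof is correct, and it follows the same underlying strategy as the paper's---constructing the continuous extension by climbing the congruence filtration---but with a cleaner, more self-contained organization. Where the paper passes through the limit-over-$k$-of-colimits presentation of $\Hom_{\cont}(\fr, \GLn(A_0))$ and, in the construction of the inverse map $\psi_U$, appeals to the separate Burnside-type \cref{rmk1} to produce a continuous homomorphism $\rho_1 \colon U \to \mathrm{Id}+p\rmM_n(A_0)$ before iterating with inverse images, you bypass that lemma entirely: you build the continuous extension of $\rho^{\mathrm{disc}}$ from $\rmF_r$ directly by inducting on the congruence level, exploiting the fact that each graded piece $(\mathrm{Id}+p^k\rmM_n(A_0))/(\mathrm{Id}+p^{k+1}\rmM_n(A_0)) \cong \rmM_n(A_0/p)$ is an elementary abelian $p$-group in which a finitely generated subgroup is finite. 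This is precisely the mechanism buried in the proof of \cref{rmk1}, so the two arguments rest on the same structural input, but yours surfaces it exactly where it is used and avoids the detour, which is arguably more transparent. One small mislabeling worth flagging: the fact that every finite-index subgroup of $\rmF_r$ arises as $W \cap \rmF_r$ for some open $W \subseteq \fr$ is not ``residual finiteness'' per se (residual finiteness gives only injectivity of $\rmF_r \hookrightarrow \fr$); what you are using is the correspondence between finite-index subgroups of a discrete group and open subgroups of its profinite completion, which the paper's setup already has in place.
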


\begin{proof}
The hom-space
	\[
		\Hom_{\mathrm{cont}}\big( \fr, \GLn(A_0))\big) \in \mathrm{Set},
	\]
denotes the set of continuous group homomorphisms in the category of pro-discrete groups. Consequently, we have a bijection of the form
	\[
		\Hom_{\mathrm{cont}}\big( \fr, \GLn(A_0)\big) \cong \underset{k}{ \lim} \colim_{U \in \mathscr{J}_r} \Hom_{\mathrm{Grp}}\big( \Gamma_U, \GLn(A_0/ p^{k+1} A_0)\big)  ,
	\]
where $\Gamma_U \coloneqq \fr / U$.
It therefore suffices to show that there exists a bijection of the form
	\[ 
		\underset{k}{\lim} \colim_{U \in \mathcal{J}_r} \Hom_{\mathrm{Grp}}\big( \Gamma_U, \GLn(A_0/ p^{k+1} A_0)\big) \cong \colim_{U \in \mathcal{J}_r} \LocSysfr \big( U\big) (A_0).
	\]
We first assert that there exists a canonical morphism
	\[
		\phi: \underset{k}{\lim} \colim_{U \in 
		\mathcal{J}_r} \Hom_{\mathrm{Grp}}\big( \Gamma_U, \GLn(A_0/ p^{k+1} 
		A_0)\big) \to \colim_{U \in \mathcal{J}_r} \LocSysfr \big( U \big)(A_0).
	\]
In order to prove this assertion we start by observing that a group morphism
	\[
		\rho_k: \Gamma_{U_k} \to \GLn(A_0 / p^{k+1}A_0),
	\]
with $U_k \in \mathcal{J}_r$, is completely determined by the image of the $r$ generators of $\Gamma_{U_k}$. Furthermore, these correspond to $r$ matrices in $\GLn(A_0/ 
p^{k+1} 
A_0)$. Therefore, given such a system of compatible group homomorphisms $\{ \rho_k \}_k$, one can associate an $r$-vector $(M_1, \dots, M_r) \in \GLn(A_0)^r$, whose mod $p$ reduction satisfies 
	\[
		\sigma_i( M_1, \dots , M_r) = \text{Id}.
	\]
By $\sigma_1, 
		\dots , \sigma_l \in U_1 \cap F_r$
we denote a choice of a finite set of topological generators for $U_1$. Thus 
	\[
		(M_1, \dots , M_r) \in \LocSysfr(U_1)(A_0).
	\]
This shows the existence of the desired map. For each $U \in \mathcal{J}_r$, we construct maps 
	\[
		\psi_{U}: \LocSysfr \big( U \big) (A_0) \to \underset{k}{ \lim} \colim_{U' \in 
		\mathcal{J}_r} \Hom_{\mathrm{Grp}}\big( \Gamma, \GLn(A_0/ p^{k+1} 
		A_0) \big),
	\]
such that when we assemble these together we obtain the desired inverse for $\phi$. In order to construct $\psi_{U}$, we start by fixing topological generators 
	\[
		\{\sigma_i\}_{i=1}^l \in U \cap F_r,
	\]
for $U$. Let $(M_1, 
\dots, M_r) \in \LocSysfr \big(U \big)(A_0)$. As we have seen, these matrices define a continuous group homomorphism 
	\[
		\fr \to \GLn(A_0/ p A_0).
	\]
Thanks to \cref{rmk1}, below, the matrices 
	\[
		\sigma_1(M_1, \dots , M_r), \dots, \sigma_l (M_1, \dots, M_r)  \in  \mathrm{Id} + p \cdot \rmM_n(A_0) 
	\]
determine a continuous group homomorphism 
	\[
		\rho_1: U \to \mathrm{Id} + p \cdot \rmM_n(A_0).
	\]
Then the inverse image 
	\[
		U_2' := 
		\rho_1^{-1} \big( \mathrm{Id} + p^2 \cdot \rmM_n(A_0) \big)
	\]
is an open normal subgroup of $U$, of finite index. As $U$ itself is an open subgroup of $\fr$ of finite index, we conclude that $U_2'$ is also a finite index subgroup of $\fr$. As open normal subgroups of finite index in $\fr$ define a final family for $\fr$, we further deduce
that there exists $U_2 \in \mathcal{J}_r$ such that $\rho_1(U_2) $ is a subgroup of $\mathrm{Id} + p^2 \cdot \rmM_n(A_0)$. Consequently, the matrices $(M_1,\dots,M_r) \in \LocSysfr(U)(A_0)$ define a group homomorphism 
	\[
		\rho_2 \colon \fr / U_2 \to \GLn( A_0 / p^2 A_0).
	\]
By 
iterating the process, we obtain a sequence of continuous group homomorphisms
	\[
		\{ \rho_i: \fr / U_i \to \GLn(A_0 / p^i A_0) \}_i \in \underset{i}{ \lim} \colim_{U \in \mathcal{J}_r} \Hom_{\mathrm{grp}}( \Gamma_U, \GLn(A_0 / p^i A_0)).
	\]
Assembling these together, we obtain a continuous group homomorphism $\rho \in \Hom_{\cont}(\fr, \GLn(A_0)$. It follows easily by our construction that, 
	\[
		\colim_{U \in \mathcal{J}_r}( \psi_U) \colon \colim_{U \in \mathcal{J}_r} \LocSysfr \big(U \big) (A_0) \to \Hom_{\cont}\big( \fr, \GLn(A_0)\big), 
	\]
is the inverse map of $\phi$, as desired.
\end{proof}

\begin{lemma}[Burnside problem for topologically nilpotent $p$-groups] \label{rmk1} Let $A_0$ be a $p$-adic complete $\Ok$-algebra. For each $m \geq 1$, we have a natural bijection
	\[
		\Hom_{\mathrm{cont}}\big(\fr, \mathrm{Id}+ p^{m+1} \cdot \rmM_n(A_0) \big) \cong \big( \mathrm{Id} + p^{m+1} \cdot \rmM_n(A_0) \big)^r.
	\]
\end{lemma}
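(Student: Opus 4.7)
The plan is to construct an inverse to the evident evaluation map
\[
\mathrm{ev} \colon \Hom_{\cont}\big( \fr, \mathrm{Id} + p^{k+1} \rmM_n(A_0) \big) \longrightarrow \big( \mathrm{Id} + p^{k+1} \rmM_n(A_0) \big)^r, \qquad \rho \mapsto \bigl(\rho(e_1), \ldots, \rho(e_r)\bigr).
\]
Injectivity of $\mathrm{ev}$ is immediate since $e_1, \dots, e_r$ topologically generate $\fr$ and $\rho$ is continuous. For the inverse, I would write $G := \mathrm{Id} + p^{k+1} \rmM_n(A_0)$ and more generally $H_j := \mathrm{Id}+p^j \rmM_n(A_0)$, so that the subgroups $G_m := H_{k+1+m}$ form a fundamental system of open normal subgroups of $G$ and $G \cong \lim_m G/G_m$ by $p$-adic completeness of $A_0$.

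The key structural input is provided by two elementary computations controlling each quotient $G/G_m$. \textbf{(Commutator estimate.)} One checks directly by multiplying out that $[H_i, H_j] \subseteq H_{i+j}$; by induction the lower central series of $G$ therefore satisfies $\gamma_j G \subseteq H_{j(k+1)}$, so $G/G_m$ is nilpotent of class at most $\lceil 1+m/(k+1)\rceil$. \textbf{(Exponent estimate.)} Using Kummer's theorem to bound $v_p\bigl(\binom{p^j}{i}\bigr) = j - v_p(i)$ term by term in the binomial expansion, one verifies that $(\mathrm{Id}+p^{k+1}x)^{p^j} \in H_{k+1+j}$ for every $x \in \rmM_n(A_0)$, so every element of $G/G_m$ has order dividing $p^m$.

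Given a tuple $(M_1,\dots,M_r) \in G^r$, I would define $\rho_0 \colon \rmF_r \to G$ on generators by $e_i \mapsto M_i$ and, for each $m \geq 0$, consider the composite $\bar{\rho}_m \colon \rmF_r \to G/G_m$. Its image is the subgroup of $G/G_m$ generated by $r$ elements inside a nilpotent group of exponent dividing $p^m$, and hence is finite; this is the (positive) nilpotent case of Burnside's problem, a quick induction on the nilpotency class that ultimately reduces to the elementary fact that a finitely generated abelian torsion group is finite. Consequently $U_m := \ker(\bar{\rho}_m)$ is a finite-index normal subgroup of $\rmF_r$, and the induced homomorphism $\rmF_r/U_m \to G/G_m$ admits a unique continuous extension $\fr \to G/G_m$ through the profinite completion. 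Taking the inverse limit over $m$ and using $G \cong \lim_m G/G_m$ yields a continuous homomorphism $\rho \colon \fr \to G$ with $\rho(e_i) = M_i$, which is manifestly an inverse to $\mathrm{ev}$.

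The main obstacle is the pair of quantitative estimates above: once one knows that each $G/G_m$ is nilpotent of bounded class and of exponent dividing $p^m$, the Burnside-type finiteness of $r$-generated subgroups and the subsequent extraction of the continuous homomorphism from its finite quotients $\rmF_r/U_m \to G/G_m$ are formal manipulations with profinite completions.
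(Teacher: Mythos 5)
Your proof is correct and follows the same overall strategy as the paper: write $G := \mathrm{Id}+p^{k+1}\rmM_n(A_0)$ as the inverse limit of its finite quotients $G/G_m$, reduce the construction of the inverse of the evaluation map to showing that the image of $\rmF_r$ in each $G/G_m$ is finite, and then deduce the continuous extension via the universal property of the profinite completion. Where you differ from the paper is in how the finiteness of that image is established. The paper observes only that the quotients are torsion and then gives a brief word-expansion-plus-Pigeonhole sketch for why a finitely generated subgroup must be finite; this is not entirely satisfying, since "finitely generated torsion" is emphatically not sufficient for finiteness in general. You make the Burnside step airtight by isolating the two quantitative inputs that the paper's argument implicitly relies on --- the commutator estimate $[H_i,H_j]\subseteq H_{i+j}$, which bounds the nilpotency class of $G/G_m$, and the exponent estimate via Kummer's valuation of binomial coefficients, which bounds the exponent by a power of $p$ --- and then invoking the (genuinely elementary) positive solution of Burnside's problem for nilpotent groups. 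This is the cleaner and more robust version of what the paper gestures at; indeed the lemma's own title, ``Burnside problem for topologically nilpotent $p$-groups,'' suggests that the nilpotency estimate you prove is exactly what the author had in mind. One small note: the statement as printed has $\mathrm{Id}+p^{k+1}\rmM_n(A_0)$ on the right-hand side where $\big(\mathrm{Id}+p^{k+1}\rmM_n(A_0)\big)^r$ is intended; your proof correctly treats the $r$-fold product, consistent with the paper's own proof.
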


\begin{proof} Let $s > 0$ be an integer.
Notice that the quotient groups,
	\[ 
		\big( \mathrm{Id} + p^m \cdot \rmM_n( A_0)/ ( \mathrm{Id} + p^{m + s +1} \cdot \rmM_n( A_0) \big), 
	\]
are torsion, i.e., every element has finite order. We conclude that
	\[ 
		\Hom_{\mathrm{cont}}  \big( \widehat{\bZ}, \mathrm{Id} + p^{m+1} \cdot \rmM_n( A_0) \big) \cong \mathrm{Id} + p^{m+1} \cdot \rmM_n( A_0),
	\]
where $\widehat{\bZ}$ denotes the profinite completion of $\mathbb{Z}$. This finishes the proof when $r =1$.
The same holds for general $\fr$, i.e., we have a canonical equivalence, 
	\[
		\Hom_{\mathrm{cont}}\big(\fr, \mathrm{Id}+ p^{m+1} \cdot \rmM_n(A_0) \big) \cong  \big( \mathrm{Id} + p^{m+1} \cdot \rmM_n(A_0) \big)^m.
	\]
In order to prove this last assertion it suffices to show that
any finitely generated subgroup of the quotient 
	\[
		\big( \mathrm{Id} + p^m \cdot \rmM_n( A_0)/ ( \mathrm{Id} + p^{m + s +1} \cdot \rmM_n( A_0) \big),
	\]
is finite (i.e. the 
Burnside problem admits an affirmative answer in this particular case). In order to justify the given assertion we fix $G$ a finitely generated subgroup of 
	\[
		 \big( \mathrm{Id} + p^m \cdot \rmM_n( A_0)/ ( \mathrm{Id} + p^{m + l+1} \cdot  \mathrm{M}_n( A_0) 
		\big).
	\] 
By assumption it is generated by matrices of the form $\mathrm{Id} + p^{m+1}N_1,  \dots , \mathrm{Id} + p^{m+1}N_s$. Therefore a general element of $G$ can be written as, 
	\[
		\mathrm{Id} + p^{m+1}( n_{1,1} N_{1, 1} + \dots + n_{1, s }N_{s, 1} ) + \dots + p^{m+l-1}(n_{a-1, 1} N_{a-1, 1} + \dots + n_{a-1, s^{l-1}} N_{a-1,s^{l-1}}), 
	\]
where the $N_{i, j}$, for $i,j \in [1, a-1] \times [1, s^{a-1}]$, denote non-linear products of the $N_i$ having at most $a-1$ terms, counted with multiplicity, where $a$ denotes the least integer such that $m \times (a+1) \geq l$.
By the Pigeonhole principle there are only finite number of such choices for the integers $n_{i,j}$ for $(i,j) \in [1, l-1]\times [1, s^{l-1}]$ and the result follows.
\end{proof}

\begin{proposition} \label{union_subguys}
Let $A\in  \cC \mathrm{Afd}^{\op}_k$ denote a $k$-affinoid algebra then we have a natural bijection,
	\[
		\Hom_{\mathrm{cont}}\big(\fr, \GLn(A)\big) \cong  \colim_{(U ,  \sigma_1, \dots , \sigma_l)} \LocSysfr \big( U,\{ \sigma_i \}_{i=1}^l \big)(A),
	\]
where $U \in \mathcal{J}_r$ and $\{\sigma_i\}_{i=1}^l$ denote a choice of topological generators for $U$.
\end{proposition}

\begin{proof}
Let $\rho: \fr \to \GLn( A)$ be a continuous homomorphism of topological groups and let $e_1, \dots, e_r$ be the fixed topological generators of $\fr$. Let
	\[
		M_i \coloneqq \rho( e_i) \in \GLn(A)
	\]
for each $1 \leq i \leq r$. The group $\mathrm{Id} + p \cdot \mathrm{M}_n( A_0)$ is 
open in $\GLn( A_0)$ and the latter open in $\GLn(A)$. We thus deduce that the inverse image 
	\[
		U \coloneqq \rho^{-1} \big( \mathrm{Id} + p \cdot \mathrm{M}_n( A_0) \big)
	\]
is an open subgroup of $\fr $ and it has thus finite index in $\fr$. Moreover, as $\mathcal{J}_r$ is a final 
family for $\fr$ one can suppose without loss of generality, up to shrinking $U$, that $U \in \mathcal{J}_r$. Thus we might as well assume that $U$ is normal in $\fr$.
Choose a finite set of topological generators for $ \rho^{-1} \big( \mathrm{Id} + p \cdot \mathrm{M}_n( A_0) 
\big)$. We deduce that the $(M_1, \dots, M_r)$ satisfy the family of inequalities
	\[
		\{ \sigma_i(M_1, \dots, M_r) \le \vert p \vert \}_{i = 1}^l.
	\]
Therefore 
	\[
		(M_1, \dots, M_r ) \in \colim_{(U, \{\sigma_i\}_{i=1}^l)} \LocSysfr \big( U , \{ \sigma_i \}_{i=1}^l\big)(A),
	\]
with $U \in \mathscr{J}_r$ and $\sigma_1, \dots , \sigma_l$ topological generators for $U$.
This proves the direct inclusion. We also deduce that we have a well defined map of sets
	\[ 
		\rho \in \Hom_{\mathrm{cont}}(\fr, \GLn(A)) \mapsto ( \rho(e_1), \dots , \rho(e_r)) \in \colim_{(U  \sigma_1, \dots , \sigma_l)} \LocSysfr \big( U, \{ \sigma_i \}_{i=1}^l \big) (A).
	\]
Let us construct an inverse map. Consider 
	\[
		(M_1, \dots, M_r) \in \GLn( A)^r,
	\]
such that 
	\[
		\sigma_i (M_1, \dots, M_r) \in \mathrm{Id} + p \cdot M_n( A_0).
	\]
Here we assume that the $		\{\sigma_i \}_{i \in [1,l]} ,$
all lie in
the dense subgroup $F_r \subset \fr$. Let $U$ denote the finite index normal open subgroup of $\fr$, generated by the $\{ \sigma_i \}_{i \in [1, l]}$. 

We remark that U is free profinite by the version of Nielsen-Schreier theorem for open subgroups of free profinite groups, see 
\cite[Theorem 3.3.1]{ribes2008wreath}. By \cref{rmk1}, we deduce that 
	\[
		( \sigma_1( M_1, \dots , M_r) , \dots, \sigma_l (M_1, \dots , M_r) ) \in \GLn(A)^r
	\]
defines a continuous group homomorphism 
	\[
		\bar{\rho}: U \simeq  \widehat{\rmF}
		_l \to \mathrm{Id} + p \cdot \mathrm{M}_n( A_0).
	\]
Therefore, we have the following diagram in the category of topological groups, 
	\[
	\begin{tikzcd}
		F_r \arrow{d}{(M_1, \dots, M_r)} \arrow{r} & \fr & U \arrow{l} \arrow{d} \\
		\GLn(A) \arrow{r}{=} & \GLn(A) & \text{Id} + p \rmM_n(A_0) \arrow{l}.
	\end{tikzcd}
	\]
We want to show that we can fill the above diagram with a continuous morphisms $\fr \to \GLn(A)$ making the whole diagram commutative.
Since $U$ is of finite index in $\fr$, we can choose elements 
	\[
		g_1, \dots g_m \in F_r \subset \fr
	\]
such that these form a (faithful) system of representatives for the finite group $\fr / U$.
For $i \in [1, m]$, write 
	\[
		g_i \coloneqq \prod_{j_i} e_{j_i}^{n_{j_i}},
	\] where this product is finite and unique by the assumption that the $g_i \in F_r$. Every element of $h \in \fr$ can be written as $h = g_i \sigma$, for some $g_i$ as above and $\sigma \in U$. Let us then 
define 
	\[
		\rho( h) \coloneqq \big( \prod_{j_i} M_{j_i}^{n_{j_i}} \big) \bar{\rho}(\sigma) \in \GLn(A).
	\]
We are left to verify that the association 
	\[
		h \in \fr \mapsto \bar{\rho}(h) \in \GLn(A),
	\]
gives a well defined continuous group homomorphism. Let 
	\[
		g \coloneqq \prod_s e_s^{n_s} \in F_r 
		\subset 
		\fr,
	\]
and $\sigma' \in U$ such that $g \sigma' = h = g_i \sigma$. We first prove that 
	\[
		\bar{\rho}(h) = \big( \prod_s M_s^{n_s} \big) \bar{\rho}(\sigma').
	\]
Suppose that $\sigma, \sigma' \in U \cap F_r$, then it follows that $h \in F_r$. The result now follows, in this case, since we have 
fixed a group homomorphism
	\[
		(M_1 , \dots, M_r) \colon F_r \to \GLn(A),
	\]
which is necessarily continuous. Otherwise, assume that it is not the case that
	\[
		\sigma , \sigma' \in U \cap F_r.
	\]
Let $(\sigma_n)_n $ and $(\sigma_{n'})_{n'}$ be sequences of elements in $U \cap F_r$ converging to $\sigma$ and $\sigma'$, 
respectively. We observe that this is possible since $F_r $ is dense in $\fr$ and $U \cap F_r$ is a free (discrete) group whose profinite completion is canonically equivalent to $U$, thus dense in $U$.
For this reason, we obtain that 
	\[
		g^{-1} g_i \sigma = \sigma' ,
	\]
and we get moreover 
that $g^{-1} g_i \sigma_n$ converges to $\sigma'$. Write $g = \prod_i e_i^{n_i}$ and, for each $i$, $g_i = \prod_{j_i} e_{j_i}^{n_{j_i}}$. The elements
	\[
		(\prod_j M_{i}^{-n_{i^{-1}}} ) (\prod_{j_i} M_{j_i}^{n_{j_i}}) \rho(\sigma_m) \in \GLn(A) \ , 
	\]
converge to $\rho(\sigma')$ by continuity of $\rho$. They also converge to the element 
	\[
		(\prod_j M_{i}^{-n_{i^{-1}}} ) (\prod_{j_i} M_{j_i}^{n_{j_i}})  \rho(\sigma) \in \GLn(A).
	\]
This last assertion follows by continuity of the group multiplication on $\GLn(A)$. Since the topology on $A$ comes from a norm on $A$, 
making the latter a Banach $k$-algebra, we conclude that $A$ is Hausdorff and so it is $\GLn(A)$. This implies that converging sequences in $\GLn(A)$ admit a unique limit. We conclude therefore that,
	\[ 
		\rho(\sigma') = (\prod_j M_{i}^{-n_{i^{-1}}} ) (\prod_{j_i} M_{j_i}^{n_{j_i}}) \rho( \sigma).
	\]
We obtain then the desired equality,
	\[
		(\prod_j M_{i}^{-n_{i^{-1}}} ) \rho(\sigma')  = (\prod_{j_i} M_{j_i}^{n_{j_i}}) \rho(\sigma).
	\]		
This proves that $\overline{\rho} \colon \fr \to \GLn(A)$ is a well defined map. We wish to show that it is a continuous group homomorphism. Our definitions make clear that to prove multiplicativity of $\overline{\rho}$ it suffices to show that for every $g \in F_r$ and $\sigma \in 
U$ 
we 
have,
	\[
		\overline{\rho}( g \sigma g^{-1} ) = \overline{\rho}(g) \rho(\sigma ) \overline{\rho}(g^{-1}).
	\]
Choose again a converging sequence $(\sigma_n)_n$, in $F_r \cap U$, such that $\sigma_n$ converges to $\sigma$. Then, for each $n$, we have
	\[
	\overline{\rho}(g \sigma_n g^{-1} ) = \overline{\rho}( g) \rho( \sigma_n) \bar{\rho}( g^{-1}).
	\]
Passing to the limit implies that we obtain the desired equality. We are reduced to show that $\overline{\rho}$ is continuous. Let $V $ be an open subset of $\GLn(A)$. The intersection $V \cap( \mathrm{Id} + pM_n(A_0)$ is open in $\GLn(A)$.
Thus, 
	\[
		\overline{\rho}^{-1} ( V 
		\cap( \mathrm{Id} + p \cdot \rmM_n(A_0)) = \rho^{-1}(V \cap( \mathrm{Id} + p \cdot \rmM_n(A_0)),
	\]
is open in $U$. Therefore, the quotient $U/ (V \cap( \mathrm{Id} + p \cdot \rmM_n(A_0))$ is discrete, since $U$ is of finite index in $\fr$. We conclude that, 
	\[
		 U / \rho^{-1}(V \cap( \mathrm{Id} + p \cdot \rmM_n(A_0)) \to \fr / \overline{\rho}^{-1} (V),
	\]
exhibits the quotient
	\[
		 U / \rho^{-1}(V \cap( \mathrm{Id} + p \cdot \rmM_n(A_0)),
	\]
as a subgroup of finite index in $ \fr / \overline{\rho}^{-1} (V)$. Thus the latter is necessarily discrete. The result now follows, since we have that $\overline{\rho}^{-1}(V)$ is an open subset in $\fr$.
\end{proof}

\begin{notation}
We shall denote by
	\begin{align*}
		\LocSysfr(\fr) (A) & \coloneqq  \colim_{(U , \{ \sigma_i \}_{i=1}^l )} \LocSysfr \big( U,\{ \sigma_i \}_{i=1}^l \big)(A) \\
		& \cong \Hom_{\mathrm{cont}}\big(\fr, \GLn(A)\big).
	\end{align*}
\end{notation}

\begin{remark} Recall the analytification functor, introduced in \cite[\S 2.6]{berkovich1993etale}. Let $\anGLn$ denote the analytification of the general linear group scheme $\mathrm{GL}_n$ over $\mathrm{Spec} (k)$. 
Proposition \ref{union_subguys} allows us to write $\LocSysfr(\fr)$ as a union of subfunctors of $\anGLn$, namely $\{ \LocSysfr(U, \{\sigma_i\}_{i=1}^l) \}_{U \in \fr}$.
This is similar to the situation in the complex case.
\end{remark}

\begin{lemma} \label{subguys_rep}
The functor $\LocSysfr(U, \{ \sigma_i \}_{i=1}^l ) \colon \Afd_k^\mathrm{op} \to \mathrm{Set}$ is representable by a (strict) $k$-analytic space. 
\end{lemma}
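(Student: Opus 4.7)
The plan is to realize $\LocSysfr(U, \sigma_1, \dots, \sigma_l)$ as a Weierstrass subdomain of the $k$-analytic space $(\anGLn)^r$, which represents the functor $A \mapsto \GLn(A)^r$ and is the analytification of the affine $k$-group scheme $\GL_n^r$.

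First I would fix an admissible cover of $\anGLn$ by the Laurent strict affinoid subdomains
\[
	V_s \coloneqq \big\{ M \in \anGLn \mid \max_{i,j} |M_{ij}|_x \leq |p|^{-s} \text{ and } |\det(M)|_x \geq |p|^{s} \big\}, \quad s \geq 0,
\]
so that $(\anGLn)^r$ is covered by the strict $k$-affinoid spaces $V_s^r$. For a fixed word $\sigma_i \in \fr$ of some finite length in the $e_j$ and their inverses, multiplication and inversion in $\GL_n$ are bounded analytic operations on each $V_s$: the map $\sigma_i$ sends $V_s^r$ into $V_{s'}$ for some $s' = s'(s, \sigma_i)$. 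Hence
\[
	\sigma_i - \mathrm{Id} \colon V_s^r \longrightarrow \anA^{n^2}
\]
is a bounded morphism of strict $k$-affinoid spaces, the target being the analytic $n \times n$ matrix space.

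Assembling over $i = 1, \dots, l$ gives
\[
	\Phi_s \coloneqq (\sigma_1 - \mathrm{Id}, \dots, \sigma_l - \mathrm{Id}) \colon V_s^r \longrightarrow \anA^{l n^2},
\]
and the requirement that $|\sigma_i(M_1, \dots, M_r) - \mathrm{Id}| \leq |p|$ for every $i$ is exactly the condition that each of the $l n^2$ component functions of $\Phi_s$ has pointwise norm $\leq |p|$. This is a Weierstrass closed condition, cutting out a strict affinoid subdomain $W_s \subset V_s^r$ via the standard Weierstrass construction in $\Afd_k$.

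Finally, the $W_s$ are compatible under the open inclusions $V_s \hookrightarrow V_{s+1}$, and I would glue them along these overlaps to obtain a strict $k$-analytic space $\sY \in \An_k$. Any $\rho \in \LocSysfr(U, \sigma_1, \dots, \sigma_l)(A)$ factors through $V_s^r(A)$ for some $s$, by compactness of $\rmM(A)$ and the fact that each $M_i = \rho(e_i)$ has bounded spectral norm, so the $W_s$ exhaust the functor; together with Yoneda on $\Afd_k$ this identifies $\sY$ with $\LocSysfr(U, \sigma_1, \dots, \sigma_l)$. The main obstacle I expect is purely bookkeeping: tracking how $s'$ depends on $s$ and on the length of the $\sigma_i$ so that the Weierstrass inequalities are literal bounded conditions on each $V_s^r$, and verifying separatedness of the glued space. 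No deeper analytic difficulty arises.
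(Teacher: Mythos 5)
Your proposal is correct and follows essentially the same route as the paper's proof: cut out $\LocSysfr(U, \sigma_1, \dots, \sigma_l)$ as a Weierstrass subdomain of a strict affinoid exhaustion of $(\anGLn)^r$ and glue. The only small point to be more careful about is the gluing step: the inclusions $V_s \hookrightarrow V_{s+1}$ are affinoid subdomain inclusions, not open immersions, and what actually makes the nested union a genuine $k$-analytic space (rather than a mere ind-affinoid) is that each $V_s$ sits in the relative interior $\mathrm{Int}\big(V_{s+1}/\anGLn\big)$, and hence likewise for the $W_s$ --- the same relative-interior nesting the paper's proof invokes with its rescaled copies of $\GLn^0$.
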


\begin{proof}
 Let $\GLn^0 = \mathrm{Sp} \big( k \langle T_{ij} \rangle [ \frac{1}{\mathrm{det}} ] \big)$ denote the closed unit disk in $\anGLn$. Let
	\[
		\LocSysfr \big( U, \{\sigma_i\}_{i=1}^l \big)^0 \in \mathrm{Fun}(\An_k, \mathrm{Set})
	\]
denote the pullback of $\LocSysfr \big( U, \{\sigma_i\}_{i=1}^l \big)$,
along the inclusion morphism $
		 \GLn^0 \hookrightarrow 
		 \anGLn$.
Consider the following cartesian diagram
	\[
	\begin{tikzcd}
		\LocSysfr (U, \{ \sigma_i \}_{i=1}^l, B)^0 \ar{d} \ar{r} & \mathrm{Sp}_\rmB B \ar{d} \\
		\LocSysfr \big( U, \{\sigma_i\}_{i=1}^l \big)^0 \ar{r} & (\GLn^0)^r.
	\end{tikzcd}
	\]
Where $B \in \Afd_k^\op$ is a $k$-affinoid algebra and 
	\[
		(M_1, \dots , M_r) \in \GLn^0(\rmB)^r = \GLn( \rmB^0)^r,
	\]
corresponds to a given morphism of $k$-analytic spaces $\mathrm{Sp} (B) \to (\GLn^0)^r$. It follows that 
	\[
		\LocSysfr (U, \{ \sigma_i \}_{i=1}^l, B)^0 \in \Fun \big( \Afd_k^\op, \mathrm{Set } \big),
	\]
corresponds to the 
subfunctor of $
\mathrm{Sp} (B)$, whose value on any $k$-affinoid algebra $A$ is in bijection with the set
	\[ 
		\mathscr{X}_{U, \{\sigma_i\}_{i=1}^l, B}^0 ( A)  \coloneqq \{ f: B \to A: \text{for each } i, \vert \sigma_i(M_1, \dots, M_r) - \mathrm{Id} \vert \leq \vert p \vert , \ \textrm{in } A\}.
	\]
Therefore, at the level of points, the functor
	\[
		\LocSysfr \big(U, \{\sigma_i\}_{i=1}^l, B \big)^0 
	\]
parametrizes those points $ x \in \mathrm{Sp} (B)$ such that, for each $i$, 
	\[
		\vert \sigma(M_1, \dots, M_r) - \mathrm{Id} \vert (x) \leq \vert p \vert(x),
	\]
It is clear from our description, that this latter functor is
representable by a 
Weierstrass subdomain of $\mathrm{Sp} (B)$. As $(\GLn^0)^r$ is a (strict) $k$-affinoid space, it follows that  $\LocSysfr \big( U, \{\sigma_i\}_{i=1}^l \big)$ is representable in the category $\Afd_k$, (consider in the above diagram with $\mathrm{Sp}
(B) = \GLn^0 $ and $(M_1, 
\dots, M_r) $ the $r$-vector whose matrix components correspond to identity morphism of $\GLn^0$). 

Let $c_i \in |k^{\times}|$ be a decreasing sequence of real numbers converging to $0$, there is a natural isomorphism
	\[
		(\GLn^0)^r \simeq \colim_i (\GLn^0)^r_{c_i},
	\]
where $(\GLn^0)^r_{c_i} $ denotes a copy of $(\GLn^0)^r$ indexed by $c_i$. The inclusion 
morphisms in the corresponding diagram sends $(\GLn^0)^r_{c_i }$ to the closed disk of radius $c_i^{-1}$ inside of $(\GLn^0)^r_{c_{i+1}}$. Henceforth we have canonical isomorphisms 
	\[
		\LocSysfr \big( U, \{\sigma_i\}_{i=1}^l \big) \cong \colim_i \LocSysfr (U, \{ \sigma_i \}_{i=1}^l)^0 \big)_{c_i}.
	\]
The latter is a union of $k$-affinoid subdomains where the image of an element in the filtered diagram lies in the relative interior, of the successive one. We thus conclude that $\LocSysfr \big(U, \{\sigma_i\}_{i=1}^l \big)
$ is itself representable by an $k$-analytic space.
\end{proof}

The following formal statement will be helpful in the proof of our main theorem:

\begin{proposition} \label{prop:representability_of_filtered_union_of_k-analytic_spaces}
Let $\{ \cX_i \}_{i \in I}$ denote a filtered diagram of $k$-analytic subdomains of a $k$-analytic space $\cY$. If for every $i \in I$, there exists an index $j \in I$ together with a morphism $i \to j$ in $J$ such that the image of the transition morphism
	\[
		f_{i , j } \colon \cX_i \to \cX_j,
	\]
lies in the relative interior $\mathrm{Int}(\cX_j/ \cY)$, then the colimit $\cX \coloneqq \colim_{i \in I} \cX_i$ exists in the category $\An_k$ and the induced morphism
	\[
		\cX \to \cY,
	\]
exhibits $\cX$ as a $k$-analytic subdomain of $\cY$.
\end{proposition}

\begin{proof} By our assumptions,
for each $i \in I$, $\cX_i$ is a $k$-analytic subdomain of $\cY$. For this reason, the $k$-analytic subdomain $\cX_i$ represents a functor, also denoted
	\[
		\cX_i \colon \Afd_k^\op \to \mathrm{Set},
	\]
which is a sub-functor of the one represented by $\cY$. This last assertion follows readily 
from the definitions. We will actually need to prove a slightly more general assertion than the stated one, namely: let $\cX \coloneqq \colim_{i \in I} \cX_i$ as functors $\Afd_k^\op \to \mathrm{Set}$. It follows from the construction that we
have a natural morphism
	\[
		f \colon \cX \to \cY,
	\]
which exhibits the former as a sub-functor of the later. We will show that $\cX$ is representable by a $k$-analytic space and $f$ exhibits the later as a $k$-analytic subdomain of $\cY$. Moreover,
$\cX$ coincides with the filtered colimit of the $(\cX_i, i \in I)$, computed in $\An_k$. Thanks to \cite[Exercise 4.5.3]{conrad2008several}, it follows that
the relative interior
	\[
		\mathrm{Int} \big( \cX_i / \cY \big) \hookrightarrow \cY
	\]
is an open subset of 
$\cY$. As a consequence, to $\cX$ we canonically associate a topological subspace 
	\begin{align*}
		\cX^{\mathrm{top}}  & \coloneqq \colim_{i \in I} \cX_i^\mathrm{top} \\
		& \cong \colim_{i \in I } \mathrm{Int} \big(\cX_i / \cY \big)^{\mathrm{top}}, 
	\end{align*}
where the superscript $(\textrm{-})^\mathrm{top}$ denotes the underlying topological space of a $k$-analytic space. By our previous considerations, $\cX^{\mathrm{top}}$ is an open subset of $\cY^\mathrm{top}$.
Consequently, the former is necessarily an 
Hausdorff space. We will construct a canonical $k$-analytic 
structure on it and show that such $k$-analytic space represents 
the functor $\cX$, above. Since, each
	\[
		\cX_i \in \An_k, \quad i \in I
	\] 
is a $k$-analytic space, we can take the maximal atlas and quasi-net on it consisting of $k$-affinoid 
subdomains of $\cX_i$, which we denote by $\mathcal{T}_{i}$. As $\cX^\mathrm{top}$ can be realized as a filtered 
union of the $\cX_i$ we claim that the union of the quasi-nets $\mathcal{T}_i$ induces a quasi-net $
\mathcal{T}$ on $\cX^\mathrm{top}$. In order to prove this we shall show that given a point $x \in \cX^\mathrm{top}
$ we need to be able to find a finite collection $V_1, \dots, V_n$ 
of compact Hausdorff subsets of $\cX^\mathrm{top}$ such that
	\[x \in \bigcap_i V_i,\]
and furthermore $V_1 \cup \dots \cup V_n$ is an open 
neighborhood of $x$ inside $\cX^\mathrm{top}$. In order to show such 
condition on $\mathcal{T}$, we notice first that that we can choose a sufficiently large index $
i \in I$ such 
that 
	\[
		x \in \cX_i,
	\]
lies in the relative interior $\mathrm{Int} \big(
\cX_i /\cY \big)$. Indeed, there exists some sufficiently large $i \in I $ such that $x \in \cX_i^\mathrm{top}$. By our assumptions, there exists $i \to j$ in $I$ such that
	\[
		\cX_i^{\mathrm{top}} \to \cX_j^\mathrm{top},
	\]
has image in the relative interior $\mathrm{Int}(\cX_j / \cY)$. Thus $x \in \mathrm{Int}(\cX_j / \cY)$, as desired. By the $k$-analytic structure on 
	\[
		\cX_j \in \An_k,
	\]
we conclude that we can take $V_1, \dots, V_n$ $k$-
affinoid subdomains of $\cX_j$ satisfying the above 
condition, as desired. 

We are reduced to show that the union $V_1 \cup \dots  
\cup V_n$ is open in $\cX^\mathrm{top}$. As before, we can assume that the $V_i$ are subdomains of a given $\cX_i,$ for a sufficiently large $i \in I$. Furthermore, by applying the same reasoning as before, we can further suppose that
the $ V_i $ lie in the relative interior $\mathrm{Int}(\cX_i, \cY)$. For this reason, the union $V_1 \cup 
\dots \cup V_n$ lies itself in the relative interior $\mathrm{Int} \big(
\cX_i/ \cY \big)$. By our choice of the $V_i$'s, their union is open in $\cX_i^{\mathrm{top}}$ and thus open in $\mathrm{Int}(\cX_i/ \cY)$, as it is contained in the latter. Since 
	\[
		\mathrm{Int}( 
		\cX_i/ \cY \big).
	\]
is open in $\cY$, we conclude that the union $V_1\cup \dots \cup V_n$ is open in $\cY$ and consequently also open in the subspace $\cX^\mathrm{top}$, itself. Clearly, $\cT$ induces quasi-nets on the intersections 
	\[
		W \cap W',
	\]
for any $W, \ W' \in \cT$. Indeed, this is an immediate consequence of the fact that
we can always choose a sufficiently large index $ i \in I$ such that $W, \ W' 
\subset \cX_i$. 

We have just proved that the union of the maximal atlas 
on each $\cX_i$ induces a well defined atlas on $\cX^\mathrm{top}$, with respect to $\cT
$. We conclude that the 
topological space $\cX^\mathrm{top}$ is endowed with a natural structure 
of $k$-analytic space (in fact, a $k$-analytic subdomain of $\cY
$, since our choices are all compatible with the inclusion $\cX^\mathrm{top} \to \cY^\mathrm{top}$). Let us denote by $\cX'$ the $k$-analytic space $(\cX^\mathrm{top}, \cT)$. We shall show that $\cX'$ 
represents the functor
	\[
		\cX \colon \Afd_k^\op \to \mathrm{Set}.
	\]
As $k$-affinoid spaces are 
quasi-compact we conclude that any map 
	\[
		\mathrm{Sp}( A ) \to \cX'
	\]
factors through some $\cX_i$, as their 
union equals the union of the respective relative interiors, which are open $k$-analytic subspaces of $\cX'$. It thus follows that $\cX'$ represents the functor given by filtered colimit
	\[
		\cX \simeq \colim_{i \in I} \cX_i,
	\]
as desired.
\end{proof}

\begin{theorem} \label{truncation_representability_free}
For each $r \geq 1$, the functor 
	\[
		\LocSysfr (\fr) : \Afd_k^{\op} \to \mathrm{S}\mathrm{et},
	\]
given on objects by the formula
	\[
		A \in \Afd_k \mapsto \{ (M_1, \dots , M_r) \in \GLn (A): \textrm{there exists }i , \ |\sigma_i(M_1, \dots M_r) - \mathrm{Id}| \leq |p| \} \in \mathrm{Set},
	\]
is representable by a (strict) $k$-analytic space.
\end{theorem}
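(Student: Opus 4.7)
The plan is to glue together the representable pieces produced by Lemma \ref{subguys_rep} into a single $k$-analytic space, using Proposition \ref{union_subguys} to identify the result with $\LocSysfr(\fr)$.

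First, Proposition \ref{union_subguys} already delivers a functorial identification
\[
    \LocSysfr(\fr)(A) \cong \colim_{(U, \sigma_1, \dots, \sigma_l)} \LocSysfr(U, \sigma_1, \dots, \sigma_l)(A),
\]
where the colimit runs over $U \in \mathcal{J}_r$ together with a choice of finite topological generators $\sigma_1, \dots, \sigma_l \in U \cap \rmF_r$. Lemma \ref{subguys_rep} guarantees that each term on the right is representable by a $k$-analytic space. To avoid size issues inherent in filtered colimits in $\An_k$, I will first extract a countable cofinal system: fix a decreasing chain $U_1 \supseteq U_2 \supseteq \cdots$ of open normal subgroups in $\fr$ that is cofinal in $\mathcal{J}_r$, together with a compatible sequence of finite generating sets $\bar{\sigma}_n \subset U_n \cap \rmF_r$. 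Then $\LocSysfr(\fr)$ is the countable filtered colimit of the representable functors $\LocSysfr(U_n, \bar{\sigma}_n)$.

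The heart of the argument is to show that, for each $n$, the natural inclusion
\[
    \LocSysfr(U_n, \bar{\sigma}_n) \hookrightarrow \LocSysfr(U_{n+1}, \bar{\sigma}_{n+1})
\]
is an open immersion in $\An_k$. Granting this, the colimit glues in a standard way to a $k$-analytic space, and the identification of functors above upgrades to an isomorphism of $k$-analytic stacks. For the check itself, I would use the presentation of each $\LocSysfr(U, \bar{\sigma})$ from the proof of Lemma \ref{subguys_rep}: a filtered union $\colim_i \LocSysfr(U, \bar{\sigma})^0_{c_i}$ of Weierstrass subdomains of increasing closed polydisks inside $(\anGLn)^r$, where each $\LocSysfr(U, \bar{\sigma})^0_{c_i}$ is cut out by inequalities of the form $|\sigma_j(M_1, \dots, M_r) - \mathrm{Id}| \leq |p|$. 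Since $U_{n+1} \subseteq U_n$, any $(M_1, \dots, M_r)$ that satisfies the inequalities for the generators $\bar{\sigma}_n$ of $U_n$ defines a representation whose restriction to $U_{n+1}$ already lands in $\mathrm{Id} + p \rmM_n(A_0)$, so at worst one needs additional Weierstrass-type conditions to cut out the image; these define an open embedding at each stage of the exhaustion, and compatibility with the Berkovich-interior inclusions in the two filtrations yields openness of the transition map globally.

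The main obstacle I expect is precisely this last step: carefully comparing the two filtrations of $\LocSysfr(U_n, \bar{\sigma}_n)$ and $\LocSysfr(U_{n+1}, \bar{\sigma}_{n+1})$ by Weierstrass subdomains, and checking that the inclusion is a morphism of $k$-analytic spaces whose restriction to each level of the exhaustion is a Weierstrass (hence open) immersion, rather than merely a monomorphism of functors. Once this verification is complete, the theorem follows formally by gluing an ascending countable chain of open $k$-analytic subspaces.
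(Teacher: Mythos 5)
Your overall strategy — represent each piece $\LocSysfr(U,\sigma_1,\dots,\sigma_l)$ via \cref{subguys_rep}, then glue using the colimit identification from \cref{union_subguys} — matches the paper's. But the key step where you assert that the transition maps
\[
    \LocSysfr(U_n, \bar\sigma_n) \hookrightarrow \LocSysfr(U_{n+1}, \bar\sigma_{n+1})
\]
are open immersions is where the argument breaks. These maps are \emph{not} open immersions: each $\LocSysfr(U,\sigma_1,\dots,\sigma_l)$ is carved out by conditions $|\sigma_i(M_1,\dots,M_r) - \mathrm{Id}| \leq |p|$, which define Weierstrass (hence compact, hence closed) analytic subdomains of $(\anGLn)^r$, not open ones. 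Your parenthetical ``Weierstrass (hence open) immersion'' is exactly backwards in Berkovich geometry. A countable filtered colimit along closed immersions of affinoid subdomains need not be representable, so the gluing does not proceed ``formally'' as you claim.

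The paper's actual mechanism is different and uses Berkovich's relative interior in an essential way. Given $(M_1,\dots,M_r) \in \LocSysfr(U,\sigma_1,\dots,\sigma_l)(A)$, one applies \cref{rmk1} to find a deeper $U' \in \mathcal{J}_r$ with $U' \subseteq \rho^{-1}(\mathrm{Id} + p^2 \rmM_n(A_0))$, so that its generators satisfy the \emph{strict} inequality $|\tau_i(M_1,\dots,M_r) - \mathrm{Id}| \leq |p^2| < |p|$. This places the point in $\mathrm{Int}\big(\LocSysfr(U',\tau_1,\dots,\tau_s)/(\anGLn)^r\big)$, which \emph{is} open in $(\anGLn)^r$. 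Consequently the union of all the $\LocSysfr(U,\sigma)$ coincides with the union of the relative interiors, hence is an open subspace of $(\anGLn)^r$. The paper then equips this open subspace with a quasi-net and atlas assembled from the maximal atlases of the pieces, and identifies its functor of points with $\LocSysfr(\fr)$ by using quasi-compactness of $k$-affinoid spaces to factor any test map $\Sp_\rmB A \to \mathscr{X}$ through a single piece. You gesture at ``compatibility with the Berkovich-interior inclusions,'' which is the right idea, but your stated conclusion — openness of the transition maps themselves — is false, and without the relative-interior reformulation the gluing argument has no foundation.
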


\begin{proof} We start by observe that if $U' \subset U$ is an inclusion of groups, both lying in $\mathcal{J}_{r},$ then they induce an inclusion of functors
	\[
		\LocSysfr \big( U, \{\sigma_i\}_{i=1}^l \big) \hookrightarrow \LocSysfr \big( U',  \{ \tau_i \}_{i=1}^s \big).
	\]
Let us employ the same notations as we did previously for
	\[
		\sigma_i(M_1, \dots , M_r) \in \mathrm{Id} + p \cdot \mathrm{M}_n(A_0),
	\]
where the $\sigma_i$ denote a choice of generators for $U$, lying in the dense subgroup $U \cap F_r$. It follows that,
we have
	\[
		\tau'_j(M_1, \dots , M_r) \in \mathrm{Id} + p \cdot \mathrm{M}_n( A_0),
	\]
for a choice of generators for $U'$, lying in $U' \cap F_r$. Thanks to \cref{subguys_rep}, it follows that the functor
	\[
		\LocSysfr \big( U, \{\sigma_i\}_{i=1}^l \big) \colon \Afd_k^\op \to \mathrm{Set},
	\]
is representable by a $k$-analytic 
subdomain of $
\LocSysfr \big( U' , \tau_1, \dots, \tau_s \big)$. We wish to show that
	\[
		\LocSysfr \big( \fr \big)  \cong \colim_{(U \in \mathcal{J}_r, \{ \sigma_i \}_{i =1}^l)}  \LocSysfr \big( U, \{\sigma_i\}_{i=1}^l \big),
	\]
is representable by a $k$-analytic space. We will prove that the inclusions
	\[
		\LocSysfr \big( U, \{\sigma_i\}_{i=1}^l \big)  \hookrightarrow \LocSysfr \big( U' , \{ \tau_i \}_{i=1}^s \big)
	\]
are inclusions of $k$-analytic subdomains as in \cref{prop:representability_of_filtered_union_of_k-analytic_spaces}, whenever $U$ denotes a sufficiently large (finite) index subgroup of $\fr$.
Given a $k$-affinoid algebra $A$ and 
	\[
		(M_1, \dots , M_r) \in \LocSysfr \big( U, \{\sigma_i\}_{i=1}^l \big)(A),
	\]
we can write, for each $i$,
	\[
		\sigma_i(M_1, \dots, M_r) = \mathrm{Id} + p \cdot N_i,
	\]
for suitable matrices $N_i \in M_n( A_0)$. Moreover, the $r$-tuple $(M_1, \dots , M_r)$ defines a continuous group homomorphism 
	\[
		\rho \colon \fr \to \GLn(A).
	\]
By \cref{rmk1}, 
quotients of the pro-$p$-group $\mathrm{Id}+ p \cdot \rmM_n(A_0)$ are of $p$-torsion.
Let $U' \in \mathcal{J}_r$ be such that 
	\[
		U' \subset \rho^{-1} (\mathrm{Id} + p^2 \cdot \rmM_n(A_0))
		.
	\]
Given $\tau_1 , \dots, \tau_s$ generators for $U'$, as above, we have 
	\[
		\vert  \tau_i( M_1, \dots , M_r) -
		\mathrm{Id} \vert \leq  \vert p^2 \vert < \vert p \vert,
	\]
for each $i \in [1,s]$. This implies that 
	\[
		(M_1, \dots, M_r) \in \mathrm{Int} \big( \LocSysfr \big( U', \{ \tau_i \}_{i=1}^s \big) / (\anGLn)^r \big). 
	\]
We are thus precisely in the conditions of \cref{prop:representability_of_filtered_union_of_k-analytic_spaces}, and the result follows.
\end{proof}

\begin{corollary} \label{pr:Hom_G}
Let $G$ be a profinite group, topologically of finite type. Then the functor 
	\[
		\LocSysfr(G) : \Afd_k^{\mathrm{op}} \to \mathrm{Set},
	\]
given on objects by the formula
	\[
		A \in \Afd_k^{\op} \to \Hom_\cont \big(G, \GLn(A) \big) \in \mathrm{Set},
	\]
is representable by a $k$-analytic space.
\end{corollary}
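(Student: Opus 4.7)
The plan is to realize $\LocSysfr(G)$ as a closed analytic subspace of the $k$-analytic space $\LocSysfr(\fr)$ constructed in \cref{truncation_representability_free}, for $r$ chosen so that $G$ is topologically generated by $r$ elements, thereby reducing the general case to the already handled free case.

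First I would fix an $r$-tuple of topological generators of $G$, producing a continuous surjection $\pi \colon \fr \twoheadrightarrow G$ with closed normal kernel $N$. Composition with $\pi$ identifies, for every $A \in \Afd_k^\op$, the set $\Hom_\cont(G, \GLn(A))$ with the subset of $\Hom_\cont(\fr, \GLn(A))$ consisting of those continuous $\tilde\rho$ that are trivial on $N$. Hence $\LocSysfr(G)$ embeds as a subfunctor of $\LocSysfr(\fr)$.

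Next, the representability of $\LocSysfr(\fr)$ supplies a universal continuous representation $\tilde\rho^{\mathrm{univ}} \colon \fr \to \GLn(\Gamma(\LocSysfr(\fr)))$. For every $\sigma \in \fr$ the locus
\[
Z_\sigma \;\coloneqq\; \{\tilde\rho^{\mathrm{univ}}(\sigma) = \mathrm{Id}\} \;\subseteq\; \LocSysfr(\fr)
\]
is a closed $k$-analytic subspace, and $\LocSysfr(G)$ is identified with the intersection $\bigcap_{\sigma \in N} Z_\sigma$. For $\sigma \in \rmF_r \cap N$, $\sigma(M_1,\dots,M_r)$ is a finite word in the universal matrices, so $Z_\sigma$ is cut out by explicit polynomial identities in the matrix coordinates; for general $\sigma \in N$, $\tilde\rho^{\mathrm{univ}}(\sigma)$ is the limit of $\tilde\rho^{\mathrm{univ}}(\sigma_n)$ along any sequence $\sigma_n \in \rmF_r$ converging to $\sigma$, and the resulting closed condition is independent of the chosen approximation by continuity.

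The main obstacle is to ensure that this possibly infinite intersection is itself a genuine $k$-analytic space. I would handle it locally: cover $\LocSysfr(\fr)$ by $k$-affinoid subdomains $\Sp(B)$; on each such chart every $Z_\sigma$ is defined by a finitely generated ideal $I_\sigma \subseteq B$ generated by the $n^2$ entries of $\tilde\rho^{\mathrm{univ}}(\sigma) - \mathrm{Id}$, and the intersection corresponds to the ideal sum $\sum_{\sigma \in N} I_\sigma$. Since $k$-affinoid algebras are Noetherian, this sum stabilizes at a finite subsum, so on each chart only finitely many equations are effectively imposed and the intersection there is a closed $k$-affinoid subspace. Gluing these closed affinoid pieces over the chosen cover yields the $k$-analytic space representing $\LocSysfr(G)$, which completes the argument.
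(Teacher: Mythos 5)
Your proof takes essentially the same route as the paper: fix a continuous surjection $\fr \twoheadrightarrow G$, identify $\LocSysfr(G)$ with the closed locus in $\LocSysfr(\fr)$ where the kernel acts trivially, and exploit Noetherianity of $k$-affinoid algebras to reduce the a priori infinite family of closed conditions on each affinoid chart to a finite one, then glue. One point you handle more carefully and explicitly than the paper is that the conditions $\tilde\rho^{\mathrm{univ}}(\sigma) = \mathrm{Id}$ must be imposed for \emph{all} $\sigma$ in the closed kernel $N$, not only for $\sigma \in N \cap \rmF_r$, and that this is legitimate chart-by-chart because the universal representation restricted to any affinoid $\Sp(B)$ is an honest continuous homomorphism $\fr \to \GLn(B)$, so $\tilde\rho^{\mathrm{univ}}(\sigma)$ is defined for every $\sigma$. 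The distinction is not purely cosmetic: for an arbitrary closed normal subgroup $N \trianglelefteq \fr$ the discrete intersection $N \cap \rmF_r$ need not be dense in $N$ (already in $\widehat{\bZ}$ a closed non-open subgroup can meet $\bZ$ only in $\{0\}$), so cutting out only by elements of $N \cap \rmF_r$ could yield too large a subspace; your use of the universal representation and continuity avoids this.
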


\begin{proof}
Let us fix a continuous surjection of profinite groups 
	\[
		q \colon \fr \to G,
	\]
for some integer $r \leq 1$. Let $H$ denote the kernel of $q$. Thanks to \cref{truncation_representability_free}, we know that $\LocSysfr(\fr)$ is representable by a $k$-analytic 
stack. We have an inclusion at the level of 
functors of points 
	\[
		q_* \colon \LocSysfr(G) \to \LocSysfr(\fr),
	\]
induced by precomposing continuous homomorphisms $\rho \colon G \to \GLn(A)$ with $q$. We show that the morphism $q_*$ is representable and a closed immersion. Let $\Sp A$ be a $k$-affinoid space and 
suppose given a morphism of $k$-analytic spaces,
	\[
		\rho \colon \Sp (A) \to \LocSysfr(G),
	\]
corresponding to a continuous representation $\rho \colon G \to \GLn(A)$. We want to compute the fiber product 
	\[
		\Sp (A) \times_{\LocSysfr(\fr)} \LocSysfr(G).
	\]
Notice that $\Sp( A)$ is quasi-compact and that we have an isomorphism at the underlying topological spaces,
	\[
		\LocSysfr(\fr)  \cong \colim_{U , \sigma_1, \dots , \sigma_l  }
		\mathrm{Int} \big(\LocSysfr(U, \{\sigma_i\}_{i=1}^l ) / (\mathbf{GL}_n^\an)^r \big).
	\]
It follows that
	\[
		q_*(\rho) \colon \Sp A \to \LocSysfr(\fr),
	\]
factors through a $k$-analytic subspace of the form $\LocSysfr \big( U, \{ \sigma_i \}_{i =1}^l \big)$, for suitable $U \in \mathcal{J}_r$ and $\sigma_1, \dots , \sigma_l \in U$. By applying again
 the same reasoning we can
assume further that 
	\[
		\rho \colon \Sp (A)  \to \LocSysfr(\fr),
	\]
factors through some $\LocSysfr \big( U, \{ \sigma_i \}_{i =1}^l \big)^0$, as in the proof of \cref{subguys_rep}. The latter is $k$-affinoid, say 
	\[
		\mathscr{X}_{U,  \{ \sigma_i \} }^0 \cong \Sp (B),
	\]
for some $k$-affinoid algebra $B$.
Let $\mathscr{X}_{G, U, \sigma_1, \dots , \sigma_l}^0$ denote the fiber product,
	\[
	\begin{tikzcd}
		\LocSysfr \big( G ,U, \{\sigma_i\}_{i=1}^l\big)^0 \arrow{d} \arrow{r} & \LocSysfr(G) \arrow{d} \\
		\LocSysfr \big( U,  \{\sigma_i\}_{i=1}^l \big)^0 \arrow{r} & \LocSysfr(\fr)
	\end{tikzcd}.
	\]
By construction, the set 
	\[
		\LocSysfr( G ,U, \{ \sigma_i \}_{i=1}^l)^0(A) \in \mathrm{Set}
	\]
corresponds to those $(M_1, \dots , M_r ) \in \LocSysfr ( U,  \{ \sigma_i \}_{i=1}^l)^0(A)$ such that 
	\[
		 h( M_1, \dots, M_r) = \text{Id},
	\]
for every $h \in H \cap F_r \subset H$. We conclude that we have an equivalence of fiber products,
	\begin{align*}
		Z & :=\Sp A \times_{\LocSysfr(\fr)} \LocSysfr(G)  \\
		& \cong \Sp A \times_{\LocSysfr \big( U,  \{ \sigma_i \}_{i=1}^l \big)^0} \LocSysfr \big( G, U, \{ \sigma_i \}_{i=1}^l \big)^0.
	\end{align*}
Every $k$-affinoid algebra is Noetherian, cf. \cite[Theorem 1.1.5]{conrad2008several}.
For this reason, we conclude that $Z$ parametrizes points which are determined by finitely many equations with coefficients in $ A \in \cC \Afd_k$ (these are induced from the relations defining $H$ inside $\fr$). The latter assertion implies that $Z$ is a closed 
subspace of $\Sp A$ and thus representable. The result now follows.
\end{proof}

\begin{remark} 
Let $G$ be a profinite group as above. There exists a canonical action of the $k$-analytic group $\anGLn$ on $\LocSysfr(G)$, via conjugation. Conjugacy classes of elements in $\LocSysfr(
G)$, under the conjugation action of $\anGLn$, correspond to rank $n$ continuous representations of $G$. 
\end{remark}

\subsection{Geometric contexts and geometric stacks} \label{chapter_2.2}
Our next goal is to give an overview of the general framework that allow us to define the notion of a geometric stack in the $k$-analytic setting. Our motivation comes from the need to define the moduli stack of continuous representations of a profinite group $G$,
as a \emph{$k$-analytic stack}. 

\begin{definition} \label{geometric_context_def}
A geometric context $( \mathcal{C}, \tau, \textbf{P})$ consists of an $\infty$-site $( \mathcal{C}, \tau)$, cf. \cite[Definition 6.2.2.1]{lurie2009higher}, and a class $\textbf{P}$ of morphisms in $\mathcal{C}$ verifying:
\begin{enumerate}
\item Every representable sheaf is a hypercomplete sheaf on $( \mathcal{C}, \tau)$.
\item The class $\textbf{P}$ is closed under equivalences, compositions and pullbacks.
\item Every $\tau$-covering consists of morphisms in $\textbf{P}$.
\item Let $f: X \to Y $ be a morphism in $\mathcal{C}$. Suppose that we are given a $\tau$-covering $\{U_i \to X \},$ such that each composition $U_i \to Y $ belongs to $\textbf{P}$. Then $f$ belongs to $\textbf{P}$.
\end{enumerate}
\end{definition}

\begin{notation}
Let $(\cC, \tau )$ denote an $\infty$-site. We denote by $\Shv (\cC, \tau )$ the \infcat of sheaves on $(\cC, \tau )$. It can be realized as a presentable left localization of the \infcat of presheaves on $\cC$, $\mathrm{PSh} (\cC) 
$, cf. \cite[Lemma 6.2.2.7]{lurie2009higher}.
\end{notation}

Given a geometric context $( \mathcal{C}, \tau , \textbf{P})$ it is possible to form an $\infty$-category of geometric stacks $\mathrm{Geom}( \mathcal{C}, \tau, \textbf{P})$ via an inductive definition as follows:

\begin{definition} \label{geometric_stack_definition}
A morphism in $F \to G $ in $\mathrm{Shv}( \mathcal{C}, \tau)$ is $(-1)$-representable if for every map $ X \to G$, where $X$ is a representable object of $\mathrm{Shv}(\mathcal{C}, \tau)$, the base change $F \times_G X $ is also representable. 

Let $n \geq 0$, 
we say that $F \in \mathrm{Shv}(\mathcal{C}, \tau)$ is $n$-geometric if it satisfies the following two conditions:
\begin{enumerate}
\item It admits an $n$-atlas, i.e. a surjective morphism $p: U \to F$, where $U$ is representable, $p$ is $(n-1)$-representable and it lies in $\textbf{P}$.
\item The diagonal map $F \to F \times F$ is $(n-1)$-representable.
\end{enumerate}
\end{definition}

\begin{definition}
We say that $F \in \mathrm{Shv}( \mathcal{C}, \tau)$ is \emph{locally geometric} if $F$ can be written as an union of $n$-geometric stacks $F = \bigcup_i G_i$, for possible varying $n$. We further require that each morphism $G_i \to F$ is an open immersion. 
\end{definition}

The following is a desirable feature of a geometric context:

\begin{definition}
Let $( \mathcal{C}, \tau)$ be an $\infty$-site. The $\infty$-category $\mathcal{C}$ is \emph{closed under $\tau$-descent} if for any morphism $F \to Y$ satisfying:
	\begin{enumerate}
	\item Both $F, \ Y \in \mathrm{Shv}( \mathcal{C}, \tau)$;
	\item $Y$ is representable; 
	\item For any $\tau$-covering $\{ Y_i \to Y \}$ the pullback $F \times_Y Y_i$ is representable.
	\end{enumerate}
The object $F$ is also representable.
\end{definition}

\begin{remark}
When the geometric context is closed under $\tau$-descent the definition of a geometric stack becomes simpler since it turns out to be ambiguous to require the representability of the diagonal map, \cite[Corollary 8.6]{porta2016derived}.
\end{remark}

\begin{example}
We are mainly interested in the geometric context $( \Afd_k, \tau_{\text{\'et}}, \textbf{P}_{\mathrm{sm}})$. Here $\tau_{\text{\'et}}$ denotes the quasi-\'etale topology on $\Afd_k$, and $\textbf{P}_{\mathrm{sm}}$ 
denotes the collection of quasi-smooth morphisms, cf. \cite[\S 3]{berkovich1994vanishing}. Such geometric context is closed under $\tau_{\text{\'et}}$-descent, cf. \cite[Proposition 8.7]{porta2016derived}. 
\end{example}

\begin{notation}
	We will refer to geometric stacks for the geometric context $( \Afd_k, \tau_{\text{\'et}}, \textbf{P}_{\mathrm{sm}})$ simply by \emph{$k$-analytic stacks}.
\end{notation}

Let $G$ be a smooth group object in the $\infty$-category $\mathrm{Shv}( \mathcal{C}, \tau)$. Suppose that $G$ acts on a representable object $X$. The \emph{quotient stack of $X$ by the $G$-action} is defined as the (homotopy) colimit of the diagram,
	\[
	\begin{tikzcd}
		\dots \arrow[r, shift left = 1.5] \arrow[r, shift left= 0.5] \arrow[r, shift right=0.5] \arrow[r, shift right=1.5]
		& G^2 \times X \arrow[r, shift left =1] \arrow[r] \arrow[r, shift right=1] 
		& G \times X  \arrow[r, shift left=1] \arrow[r, shift right=1]
		& X.
	\end{tikzcd}
	\]
We denote such (homotopy) colimit by $[X/ G]$.

\begin{lemma} \label{stack_quotient}
Let $( \mathcal{C}, \tau, \textbf{P})$ be a geometric context satisfying $\tau$-descent. Let $G$ be a smooth group object in the $\infty$-category $\mathrm{Shv}( \mathcal{C}, \tau)$ acting on a representable object $X$. Then the quotient $[X/G]$ is a 
geometric stack.
\end{lemma}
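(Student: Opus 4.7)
The plan is to verify the two conditions of \cref{geometric_stack_definition} directly and exhibit $[X/G]$ as a $0$-geometric stack, under the natural assumption that $G$ is representable (the situation in our intended application, where $G=\anGLn$). By construction $[X/G]$ is the colimit of the simplicial diagram whose degree $n$ term is $G^n\times X$, so there is a canonical morphism $p\colon X\to [X/G]$ which I take as the candidate atlas. The key geometric input is the identification
\[
X\times_{[X/G]}X\simeq G\times X,
\]
obtained from the projection and action maps $G\times X\rightrightarrows X$; this fiber product is representable.

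To show $p$ is an atlas, the first step is to observe that $p$ is an effective epimorphism in $\Shv(\cC,\tau)$: this holds because $[X/G]$ is tautologically the geometric realization of the \v{C}ech nerve of $p$. Given any morphism $Y\to [X/G]$ with $Y$ representable, effectivity produces a $\tau$-cover $\{Y_i\to Y\}$ together with lifts $Y_i\to X$, and over each piece one has
\[
Y_i\times_{[X/G]}X\simeq Y_i\times_X\bigl(X\times_{[X/G]}X\bigr)\simeq Y_i\times G,
\]
which is representable. Closure of $(\cC,\tau,\mathbf{P})$ under $\tau$-descent then forces $Y\times_{[X/G]}X$ itself to be representable, so $p$ is $(-1)$-representable. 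That the resulting morphism lies in $\mathbf{P}$ is inherited from the smoothness of $G\to \ast$ together with stability of $\mathbf{P}$ under base change and axiom (iv) of \cref{geometric_context_def}.

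For the diagonal $\Delta\colon [X/G]\to [X/G]\times [X/G]$, the central observation is that the square
\[
\begin{tikzcd}
G\times X \ar{r} \ar{d} & X\times X \ar{d} \\
{[X/G]} \ar{r}{\Delta} & {[X/G]}\times {[X/G]}
\end{tikzcd}
\]
is cartesian, with the top horizontal map given by $(g,x)\mapsto (x,gx)$; this reflects the fact that two points of $X$ become identified in $[X/G]$ exactly when they lie in a common $G$-orbit. Since $X\times X$ is representable and the right vertical map is in $\mathbf{P}$ (being the product of two atlases), and since the pullback of $\Delta$ along this cover is the representable object $G\times X$, a second application of $\tau$-descent shows that for any test morphism $Z\to [X/G]\times [X/G]$ from a representable $Z$, the fiber product $Z\times_{[X/G]\times [X/G]}[X/G]$ is representable. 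The delicate point throughout is precisely this passage from $\tau$-local to global representability, which is exactly what the closedness-under-$\tau$-descent hypothesis is designed to handle.
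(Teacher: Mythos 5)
Your argument is correct and follows the same route as the paper's (one-line) proof: exhibit $p\colon X\to[X/G]$ as a $0$-atlas, filling in the standard justification via effectivity of $p$, the identification $X\times_{[X/G]}X\simeq G\times X$ coming from effectivity of the action groupoid, and $\tau$-descent. Note however that the paper invokes the remark preceding the lemma to skip the diagonal condition entirely — in a context closed under $\tau$-descent it is implied by the existence of the atlas — so your separate verification of the diagonal is correct but redundant; and you are right to flag that the conclusion ``$0$-geometric'' tacitly requires $G$ itself to be representable, an assumption the statement leaves implicit in the phrase ``smooth group object''.
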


\begin{proof}
It suffices to verify condition (1) of Definition 2.12. By definitoin of $[X/ G]$ we have a canonical morphism $X \to [X/ G]$ which is easily seen to be (-1)-representable and smooth. Therefore, $[X/G]$ is a 0-geometric stack.
\end{proof}

\begin{definition} \label{rep_def}
Let $G$ be a profinite group of topological finite presentation. We define the $k$-analytic stack of continuous representations of $G$ as
	\[
		\LocSys(G) \coloneqq [ \LocSysfr(G) / \anGLn] \in \St \big( \Afd_k, \tau_\et, \rmP_\sm \big).
	\]
\end{definition}

We now obtain the important result:

\begin{theorem} \label{thm:Theorem_1}
Let $G$ be a profinite group of topological finite presentation. Then the functor 
	\[	
		\LocSys(G) \colon \Afd_k \to \cS,
	\]
is representable by a geometric stack. 
\end{theorem}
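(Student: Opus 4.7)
The plan is to reduce the statement directly to \cref{stack_quotient} applied to the geometric context $(\Afd_k, \tau_\et, \rmP_\sm)$. By \cref{rep_def}, the functor $\LocSys(G)$ is defined as the stacky quotient $[\LocSysfr(G)/\anGLn]$, so the theorem is essentially a bookkeeping exercise: I need to verify that all the hypotheses of \cref{stack_quotient} are met in our specific non-archimedean setting.

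First, I would appeal to \cref{pr:Hom_G} to record that $\LocSysfr(G)$ is representable by a $k$-analytic space, i.e. by a representable object of $\Shv(\Afd_k, \tau_\et)$. Next, I would note that $\anGLn$, being the analytification of the smooth algebraic group $\GLn$ over $k$, is a smooth group object in $\Shv(\Afd_k, \tau_\et)$; the smoothness here is with respect to the class $\rmP_\sm$ of quasi-smooth morphisms, since analytification preserves smoothness. I would then observe, as in the remark following \cref{pr:Hom_G}, that the formula \eqref{first} endows $\LocSysfr(G)$ with a canonical action of $\anGLn$ by conjugation, giving a simplicial object of the form required in the discussion preceding \cref{stack_quotient}.

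Second, I would verify that the geometric context $(\Afd_k, \tau_\et, \rmP_\sm)$ satisfies $\tau_\et$-descent. This is the content of the example preceding \cref{stack_quotient}, and it ultimately rests on the effectivity of descent for $k$-analytic spaces along quasi-\'etale coverings, together with the fact that representable presheaves on $\Afd_k$ are hypercomplete $\tau_\et$-sheaves (see \cite{berkovich1994vanishing}). With $\tau_\et$-descent in hand, \cref{stack_quotient} directly applies and produces the desired $0$-geometric stack structure on $[\LocSysfr(G)/\anGLn]$, with $\LocSysfr(G) \to \LocSys(G)$ serving as a smooth atlas.

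There is no significant obstacle in this argument; the work has all been carried out either in the representability statement \cref{pr:Hom_G}, which encapsulates the analytic content of handling the ind-pro topology on $\GLn(A)$ against the profinite topology on $G$, or in the general formalism \cref{stack_quotient} that quotients of representable objects by smooth group actions are geometric in any $\tau$-descent geometric context. The conclusion follows by assembling these two inputs.
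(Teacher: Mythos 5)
Your proposal is correct and follows exactly the paper's own route: the paper's proof is precisely the one-line observation that the result is a direct consequence of \cref{stack_quotient} together with \cref{pr:Hom_G}, and your write-up simply spells out the hypotheses being checked. No discrepancy.
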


\begin{proof}
The result is a direct consequence of \cref{stack_quotient} together with \cref{pr:Hom_G}.
\end{proof}

\begin{corollary}
Let $X$ be a smooth and proper scheme over an algebraically closed field. Then the $k$-analytic stack parametrizing continuous representations of $\pi^\emph{\et}_1(X)$ is representable by a geometric stack.
\end{corollary}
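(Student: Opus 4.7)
The plan is to invoke \cref{thm:Theorem_1} applied to the profinite group $G := \pi_1^\et(X)$. The étale fundamental group is manifestly profinite from its construction as an inverse limit over the automorphism groups of fiber functors on finite étale covers of $X$, so the only substantive point is to verify that $\pi_1^\et(X)$ is topologically of finite presentation. In fact, inspection of \cref{truncation_representability_free} and \cref{pr:Hom_G} shows that the argument there only uses a continuous surjection $\fr \twoheadrightarrow G$, so topological finite generation will suffice.

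To establish topological finite generation of $\pi_1^\et(X)$, I would split by the characteristic of the algebraically closed base field. In characteristic zero, I would invoke the Lefschetz principle to reduce to the case of an algebraically closed subfield of $\bC$, and then apply the Riemann existence theorem to identify $\pi_1^\et(X)$ with the profinite completion $\widehat{\pi_1^\top(X(\bC))}$. Since $X(\bC)$ is compact and admits a finite CW-structure (being the analytification of a smooth proper scheme), $\pi_1^\top(X(\bC))$ is finitely presented as an abstract group, hence its profinite completion is topologically finitely generated.

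In positive characteristic, I would appeal to Grothendieck's specialization theorem for the fundamental group, combined with de Jong's alterations (or resolution of singularities when available), to lift the situation to a smooth proper family over a base in characteristic zero and reduce to the previous case.

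Once topological finite generation is secured, \cref{thm:Theorem_1} applies directly: the stack $\LocSys(\pi_1^\et(X))$, defined in \cref{rep_def} as the stack quotient $[\LocSysfr(\pi_1^\et(X))/\anGLn]$, is representable by a $k$-analytic geometric stack, which is precisely the conclusion. The main conceptual obstacle is really the input from SGA on topological finite generation of $\pi_1^\et$ of smooth proper varieties; everything else is formal from the results of this section.
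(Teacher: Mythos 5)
Your route is the same as the paper's: apply \cref{thm:Theorem_1} (through \cref{pr:Hom_G}) to $G = \pi_1^\et(X)$, after securing that this profinite group is topologically finitely generated. Your observation that topological finite generation suffices, even though \cref{thm:Theorem_1} is phrased for finite presentation, is correct and worth recording: in the proof of \cref{pr:Hom_G} the closed subspace cut out by the relations defining $H = \ker(\fr \twoheadrightarrow G)$ is controlled by the Noetherianity of $k$-affinoid algebras, so no finite set of relations is needed. The paper itself does not attempt to prove the finite-generation input at all; it simply cites SGA~1, Exp.~X, Th.~2.9.

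Where your sketch of that input breaks down is in positive characteristic. A general smooth proper variety over an algebraically closed field of characteristic $p$ does \emph{not} lift to characteristic zero (there are well-known non-liftable examples already among surfaces), and de Jong's alterations do not repair this: they replace $X$ by a smooth projective variety equipped with a proper, surjective, generically finite map to $X$, but that replacement still lives in characteristic $p$ and may itself fail to lift. The actual argument in SGA~1 reduces, after Chow's lemma, to the projective case and then to curves via the Lefschetz hyperplane theorem for $\pi_1$ (a general hyperplane section $H \subset X$ induces a surjection $\pi_1(H) \twoheadrightarrow \pi_1(X)$ when $\dim X \geq 2$), and only then invokes specialization for curves, which always lift since the obstruction space $H^2(C, T_C)$ vanishes. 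Your characteristic-zero argument via the Lefschetz principle and Riemann existence is fine. Since the statement is in any case a citable result, the cleanest repair is simply to quote SGA~1 as the paper does, rather than reprove it.
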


\begin{proof}
It follows immediately by \cref{thm:Theorem_1} together with the fact that under such assumptions on $X$ its \'etale fundamental group $\pi_1^\et(X)$ is topologically of finite generation.
\end{proof}

\subsection{Pro-etale lisse sheaves on $X_{\text{\'et}}$} Let $X$ be a proper and smooth scheme over an algebraically closed field $K$. Fix a geometric point
	\[
		x \colon \Spec K \to X.
	\]
Thanks to \cite[Thm 2.9, Expos\'e 10]{grothendieck224revetements}, the \'etale fundamental group
	\[
		\pi_\et^1(X) \coloneqq \pi_\et^1(X, x),
	\]
is profinite topologically of finite generation. Moreover,
\cite[Lemma 7.4.10]{bhatt2013pro} implies that the pro-\'etale and \'etale fundamental groups, of $X$, agree. For this reason, the \'etale fundamental group $\pi_1^\et(X)$ parametrizes \emph{rank $n$ pro-\'etale $\bQ_p$-local systems on $X$}, \cite[Lemma 7.4.7]{bhatt2013pro}. Our goal is to generalize the latter assertion to the case where we replace the field $\bQ_p$ by an affinoid $\bQ_p$-algebra, $A$. We refer the reader to
\cite[Definition 4.1.1]{bhatt2013pro} for the definition of the \emph{pro-\'etale site on $X$}, $X_{\mathrm{pro}\textrm{-\'etale}}.$

\begin{definition}[Noohi group]
Let $G$ be a topological group and consider the category of \emph{$G$-sets}, denoted $G$-Set. Consider the forgetful functor 
	\[
		F_G \colon 
		G\textrm{-}\mathrm{Set}\to \mathrm{Set}. 
	\]
We say that $G$ is a \emph{Noohi group} if there is a canonical equivalence $G \simeq \mathrm{Aut}(F_G)$, 
where $\mathrm{Aut}(F_G)$ is topologized via the compact-open topology on $\mathrm{Aut}(S)$, for each $S \in \mathrm{Set}$.
\end{definition}

\begin{lemma}
Let $G$ be a topological group such that it contains an open Noohi subgroup $U \le G$. Then $G$ is itself a Noohi group.
\end{lemma}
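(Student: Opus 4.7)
The plan is to show that the canonical continuous homomorphism $\alpha \colon G \to \mathrm{Aut}(F_G)$ is an isomorphism of topological groups. The induction functor $T \mapsto G \times_U T$ from continuous $U$-sets to continuous $G$-sets will play a central role throughout, as it allows information to be transferred between $U$ and $G$; in particular, the stabilizer in $G$ of $[e,t] \in G \times_U T$ coincides with the stabilizer in $U$ of $t \in T$.

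Continuity of $\alpha$ is immediate, since stabilizers in continuous $G$-sets are open. For injectivity, if $g \in G$ acts trivially on every continuous $G$-set, then applying this to $G/U$ forces $g \in U$, and applying it to $G \times_U T$ forces $g \cdot t = t$ for every continuous $U$-set $T$ and every $t \in T$, so $g = 1$ by the Noohi property of $U$.

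The main technical step is surjectivity. Given $\phi \in \mathrm{Aut}(F_G)$, I would first choose $g_0 \in G$ with $\phi(eU) = g_0 U$ and replace $\phi$ by $\alpha(g_0)^{-1} \phi$ to assume $\phi(eU) = eU$. Naturality of $\phi$ with respect to the projection $G \times_U T \to G/U$ then forces $\phi$ to preserve the fiber over $eU$, which, identified with $T$ via $t \mapsto [e,t]$, yields a natural automorphism $\phi_* \in \mathrm{Aut}(F_U)$; the Noohi property of $U$ provides $u \in U$ with $\phi_* = \alpha_U(u)$, and after replacing $\phi$ by $\alpha(u)^{-1} \phi$ we may assume that $\phi$ fixes $eU$ and acts trivially on every fiber over $eU$. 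The hard part is then to conclude $\phi = \mathrm{id}$, which I would check orbit-by-orbit by showing $\phi_{G/H} = \mathrm{id}$ for every open subgroup $H \subset G$ in three steps: (a) for $H \subset U$ open, using $G \times_U (U/H) \cong G/H$ so that $U/H$ is the fiber over $eU$ and the hypothesis directly gives $\phi(eH) = eH$; (b) for a general open $H$, applying naturality along the $G$-map $G/(H \cap U) \to G/H$ to propagate $\phi(e(H \cap U)) = e(H \cap U)$ to $\phi(eH) = eH$; and (c) for an arbitrary coset $gH$, applying naturality along the $G$-map $G/(gHg^{-1}) \to G/H$ sending $e(gHg^{-1}) \mapsto gH$, combined with the identity $\phi(e \cdot gHg^{-1}) = e \cdot gHg^{-1}$ established by (b) for the open subgroup $gHg^{-1}$. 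Keeping the two successive normalizations straight and verifying all the naturality claims is the most delicate part of the argument.

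Finally, openness of $\alpha$ follows by the same induction: given an open $V \ni 1$ in $G$, the Noohi property of $U$ provides finitely many pairs $(T_i, t_i)$ of continuous $U$-sets and points such that $\bigcap_i \mathrm{Stab}_U(t_i) \subset V \cap U$; the open neighborhood $W := \{\phi \in \mathrm{Aut}(F_G) : \phi(eU) = eU,\ \phi([e,t_i]) = [e,t_i] \text{ for all } i\}$ of $\mathrm{id}$ is then contained in $\alpha(V)$, since re-running the surjectivity argument on any $\phi \in W$ produces an element of $V \cap U$ realizing it.
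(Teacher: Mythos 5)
The paper dispenses with this lemma by citing \cite[Lemma 7.1.8]{bhatt2013pro} and gives no in-text argument, so there is nothing internal to compare against; you instead supply a complete, self-contained proof, and it is correct. Your strategy --- showing the canonical map $\alpha\colon G \to \mathrm{Aut}(F_G)$ is continuous, injective, surjective, and open, with the induction functor $T \mapsto G \times_U T$ as the bridge between $U$-sets and $G$-sets --- is essentially the same argument that underlies the cited reference, but with the naturality checks made explicit. The delicate points are handled properly: the second normalization by $\alpha(u)^{-1}$ with $u \in U$ does preserve $\phi(eU)=eU$; the identification of the fiber of $G \times_U T \to G/U$ over $eU$ with $T$ is correct and does make $\phi_*$ a genuine element of $\mathrm{Aut}(F_U)$; the three reductions (a)--(c) correctly reduce $\phi = \mathrm{id}$ on all of $G$-$\mathrm{Set}$ to the orbit category (using $G\times_U(U/H) \cong G/H$ for $H \subseteq U$, then $G/(H\cap U) \to G/H$, then $G/(gHg^{-1}) \to G/H$), relying only on the fact that continuous $G$-sets have open stabilizers and decompose into orbits; and the openness of $\alpha$ follows as you describe because the pointwise topology on $\mathrm{Aut}(F_U)$ has the sets $\bigcap_i \mathrm{Stab}(t_i)$ as a basis of open subgroups at the identity. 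The one thing worth flagging is that the hypothesis that $G$ has a neighborhood basis at $1$ consisting of open subgroups (needed for ``continuous $G$-sets are unions of $G/H$ with $H$ open'') is automatic here because $U$ is open and, being Noohi, has such a basis; it does no harm to state this explicitly.
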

\begin{proof}
This is \cite[Lemma 7.1.8]{bhatt2013pro}.
\end{proof}

\begin{lemma} \label{Noohi}
Let $A$ be an $k$-affinoid algebra, then $\GLn(A)$ is a Noohi group.
\end{lemma}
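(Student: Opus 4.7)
The plan is to exhibit an open Noohi subgroup of $\GL_n(A)$ and then invoke the preceding lemma. Fix a formal model $A_0$ for the $k$-affinoid algebra $A$, i.e.\ a $\pi$-adically complete $\Ok$-algebra of topological finite presentation such that $A_0 \otimes_{\Ok} k \simeq A$, with $A_0$ identified with an open subring of $A$. Since $A_0 \subseteq A$ is open and multiplicatively closed, and since the non-units of $A_0$ form the inverse image of a closed set under a continuous map, the inclusion $\GLn(A_0) \subseteq \GLn(A)$ is an open subgroup. By the previous lemma, it suffices to show that $\GLn(A_0)$ is a Noohi group.

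To show $\GLn(A_0)$ is Noohi, I would use the characterization (see \cite[Proposition 7.1.5 and Example 7.1.6]{bhatt2013pro}) that a Hausdorff topological group is Noohi provided it is complete and admits a neighborhood basis of the identity consisting of open subgroups. First, the subgroups $\mathrm{Id} + \pi^k \rmM_n(A_0) \unlhd \GLn(A_0)$ form a fundamental system of open neighborhoods of the identity, as recalled earlier in the paper; they are visibly subgroups since their quotients
\[
    \GLn(A_0)/(\mathrm{Id} + \pi^k \rmM_n(A_0)) \cong \GLn(A_0/\pi^k A_0)
\]
are groups. Second, since $A_0$ is $\pi$-adically complete and separated, we have the canonical isomorphism
\[
    \GLn(A_0) \cong \underset{k \geq 1}{\lim}\, \GLn(A_0/\pi^k A_0)
\]
of topological groups (already noted in the preliminaries), exhibiting $\GLn(A_0)$ as a cofiltered limit of discrete groups; in particular, it is Hausdorff and complete.

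Combining these two facts yields that $\GLn(A_0)$ is a complete Hausdorff topological group with a basis of open subgroups at the identity, hence Noohi by the cited criterion. Applying the preceding lemma then gives that $\GLn(A)$ is a Noohi group.

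The only point of potential subtlety is the invocation of the general criterion for Noohi-ness; its proof is not hard but requires identifying $\GLn(A_0)$ with the automorphism group of the forgetful functor $\GLn(A_0)\textrm{-}\mathrm{Set} \to \mathrm{Set}$, which reduces, via the tower description above, to the profinite case treated in \cite[Example 7.1.7]{bhatt2013pro} applied levelwise to each finite-discrete quotient $\GLn(A_0/\pi^k A_0)$ and passing to the limit. This is the main technical step, though it is entirely formal once the fundamental system of open subgroups has been identified.
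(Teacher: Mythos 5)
Your proof is correct and follows essentially the same route as the paper: reduce to the formal model $A_0$, show $\GLn(A_0)$ is Noohi by identifying its topology via the tower $\GLn(A_0)\cong\lim_k\GLn(A_0/\pi^kA_0)$ and the basis of open subgroups $\mathrm{Id}+\pi^k\rmM_n(A_0)$, and then pass to $\GLn(A)$ via the open-subgroup lemma from \cite{bhatt2013pro}. The only cosmetic difference is that the paper appeals to \cite[Proposition~2.14]{noohi2004fundamental} for the Noohi-ness of $\GLn(A_0)$ whereas you use the equivalent complete-Hausdorff-with-open-subgroup-basis criterion of \cite{bhatt2013pro}; one small imprecision is that the quotients $\GLn(A_0/\pi^kA_0)$ are discrete but not finite, so ``finite-discrete'' in your closing remark should just read ``discrete,'' which is all the cited criterion needs.
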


\begin{proof}
Let $A_0$ be a formal model for $A$. By our choice, $A_0$ is a $p$-adically complete ring. We have furthermore a sequence of natural isomorphisms
	\begin{align*}
		\GLn( A_0) & \cong \underset{k}{\lim} \GLn( A_0 / p^k A_0 ) \\
				   & \cong  \underset{k}{\lim} \GLn(A_0 ) / \left( \mathrm{Id} + p^k \rmM_n( A_0) \right).
	\end{align*}
In particular, $\GLn(A_0$) is a pro-discrete group, as in \cite[Definition 2.1]{noohi2004fundamental}. Moreover, the system 
	\[\{ \GLn(A_0) \} \cup \{ \mathrm{Id} + p^m \cdot \rmM_n( A_0)  \}_{m \ge 1},\]
forms a basis of open normal 
subgroups of 
$\GLn(A_0)$. For this reason, $\mathrm{GL}_n(A_0)$ is a Noohi group, cf. \cite[Proposition 2.14]{noohi2004fundamental}. As $A_0$ is an open subring of $A$, we conclude that 
	\[\GLn(A_0) \le \GLn(A),\]
is an open subgroup. It then follows by
\cite[Lemma 7.1.8]{bhatt2013pro} that $\GLn(A) $ is a Noohi group, as desired. 
\end{proof}

\begin{construction} Let $A $ denote a $p$-adically complete $\Ok$-algebra.
	Let $\mathcal{F}_{\GLn(A)}$ denote the pre-sheaf on the pro-\'etale site $X_{\text{pro\'et}}$ defined by the formula
	\[	
		T \in X_{\text{pro-\'etale}} \mapsto \Map_\cont( \vert T \vert, \GLn(A)),
	\]
	where $\vert T \vert$ denotes the underlying topological space of the scheme $T$. The above formula actually defines a sheaf (of groups) on the pro-\'etale topology on $X$, cf. \cite[Lemma 4.2.12]{bhatt2013pro}.
\end{construction}

\begin{definition}
	The category of \emph{pro-\'etale free of rank $n$ $A$-local systems on $X$}, denoted $\mathrm{Loc}_{X, n}(A)$, is defined as the category of $\cF_{\GLn(A)}$-torsors on $X_{\textrm{pro-\'etale}}$.
\end{definition}

The following Proposition is an immediate generalization of \cite[Lemma 7.4.7]{bhatt2013pro}. We present its proof for the sake of completeness.

\begin{proposition}\label{scholze&bhatt}
Let $A$ be a $k$-affinoid algebra. Then there is a natural equivalence of groupoids,
	\[
		\LocSys(X)(A) \simeq \mathrm{Loc}_{X,n}(A).
	\]
\end{proposition}

\begin{proof}
Let $A_0$ be a formal model for $A$. As in the proof of \cref{Noohi}, $\GLn(A_0)$ is a pro-discrete subgroup (thus a Noohi subgroup) of $\GLn(A)$.
It thus follows from \cite[Lemma 7.4.6]{bhatt2013pro} that we are allowed to replace $A$ by $A_0$, in the statement of the Lemma. Let,
	\[
		\rho: \pi_1^\et(X) \to \GLn(A),
	\]
be a continuous representation. We define
	\[
		U \coloneqq \rho^{-1}( \GLn(A_0)).
	\]
In particular, $U$ is an open subgroup of $\pi_1^\et(X)$. 
It thus defines a pointed covering $X_U \to X$ such that 
	\[\pi_1^\et(X_U) \cong U.\]
The induced representation,
	\[
		\pi_1^\et(X_U) \to \GLn(A_0),
	\]
produces a well-defined element $M \in \mathrm{Loc}_{X_U, n}(A_0)$. Hence, after inverting $p$, it induces a 
local system $M' \in \mathrm{Loc}_{X_U}(A)$. Such element $M'$ comes equipped with descent data for $X_U \to X$ and therefore comes from a unique element $N(\rho) \in \mathrm{Loc}_{X,n}(A)$. Conversely, fix some element $ N \in \mathrm{Loc}_{X,n} ( A)$ 
which corresponds to a well defined $ \mathcal{F}_{\GLn(A)}$-torsor. 
Let $S \in \GLn(A)$-$ \mathrm{Set}$, we then have an induced representation,
	\[
		\rho_S :  \mathcal{F}_{\GLn(A)} \to \mathcal{F}_{\mathrm{Aut}(S)},
	\] 
of pro-\'etale local systems. The pushout of $N$ along $\rho_S$ defines thus an element $N_S \in \mathrm{Loc}_{X,n}(A)$. Moreover, its stalk at the base point is isomorphic to  $S$. Moreover, this association is functorial in $S$. It thus defines a functor
	\[
		\GLn(A) \textrm{-} \mathrm{Set} \to \mathrm{Loc}_{X,n}(A),
	\]
compatible with the fiber 
functor. By \cref{Noohi}, $\GLn(A)$ is a Noohi group. We can thus associate to it a continuous homomorphism 
	\[
		\rho_N : \pi_1^\et(X) \to \GLn(A).
	\]
This construction provides us with an inverse for the previously defined functor. The result now follows.
\end{proof}

\begin{corollary} \label{loc_vs_rep}
The non-archimedean stack $\LocSys(X)$ represents the functor $\Afd_k^\op \to \cS$ given on objects by the formula,
	\[ 
		A \mapsto \mathrm{Loc}_{X,n} ( A) \in \cS.
	\]
\end{corollary}
\begin{proof}
It follows by the construction of quotient stack combined with \cref{scholze&bhatt}.
\end{proof}

\section{Enriched \infcats}

\subsection{Preliminaries on enriched \infcats} 
In this \S,  we will state and prove certain results about enriched \infcats, that will be convenient for us. Our main reference is \cite{gepner2015enriched}. We do not attempt to give a full account of the results contained in the given citation. Instead, we only recall
a few notions that will be important for our study:

\begin{definition}
Let $\cV^\otimes$ be a presentably symmetric monoidal \infcat. We denote by $\mathrm{Alg}_{\mathrm{cat}}(\cV^\otimes)$, the \infcat of \emph{categorical algebras} on $\cV^\otimes$, as in \cite[Definition 4.3.1]{gepner2015enriched}.
\end{definition}

\begin{definition}A \emph{$\cV$-enriched \infcat} corresponds to a \emph{complete object} in $\mathrm{Alg}_{\mathrm{cat}}(\cV)$. We refer the reader to \cite[Definition 5.2.2]{gepner2015enriched}, for the notion of complete objects.
\end{definition}

\begin{definition}
Following \cite[Definition 5.4.3]{gepner2015enriched} we denote by $
\Cat (\cV^\otimes) \coloneqq \Cat^{\cV}$ the \infcat of \emph{$\cV$-enriched \infcats}. The latter corresponds to the full subcategory of $\mathrm{Alg}_{\mathrm{cat}}(\cV^\otimes)$, which is the (presentable) localization
at the family of \emph{essentially surjective fully faithful functors},
cf. \cite[Corollary 5.6.3 and Corollary 5.6.4]{gepner2015enriched}.
\end{definition}

\begin{notation}
	When the symmetric monoidal structure on $\cV$ is clear from the context, we simply denote $\Cat(\cV^\otimes)$ by $\Cat(\cV)$.
\end{notation}

We now introduce the notion of space of objects, as in \cite[Definition 5.1.1]{gepner2015enriched}:
\begin{definition}Consider the unit $\mathbf{1}_{\cV^\otimes} \in \cV^\otimes$. There is an essentially unique way to extend $\mathbf{1}_{\cV^\otimes} $ to an associative algebra object
	\[
		E_{\cV^\otimes}^0 \colon \Delta^\op \to \cV^\otimes.
	\]	
On the other hand, this defines a natural object in the \infcat $\Cat(\cV^\otimes)$.
We then define the \emph{space of objects} of $\cC$, denoted $\iota_0(\cC)$, as
	\begin{align*}
		\iota_0(\cC) & \coloneqq \Map_{\Cat(\cV^\otimes)}(E_{\cV^\otimes}^0, \cC) \\
				   & \in \cS.
	\end{align*}
\end{definition}

The following statement will prove to be very useful in our further study:

\begin{lemma} \label{lem:limits_of_enr_cats_on_objects}
	Let $\cV^\otimes$ be a presentably symmetric monoidal \infcat. Suppose we are given a small diagram
$F \colon I \to \Cat( \cV^\otimes)$. If the limit 
	\[
		\cC \coloneqq \underset{I}{\lim} F,
	\]
exists in the \infcat
$\Cat(\cV^\otimes)$ then the space of objects $\iota_0 \cC$ can be naturally identified with the limit
	\[	
		\lim_i (\iota_0(\cC_i)) \in \cS,
	\]
where, for each $i \in I$, $\cC_i \coloneqq F(i)$, in $\Cat(\cV^\otimes)$. 
\end{lemma}

\begin{proof}
	In this case, we have a chain of equivalences in $\cV^\otimes$
	\begin{align*}
		\iota( \cC) & \simeq \\
		& \simeq  \Map_{  \Cat(\cV^\otimes)} \big( E_{\cV^\otimes}^0, \cC)   \\
		& \simeq \underset{i \in I} \lim \Map_{\Cat(\cV^\otimes)} \big(E_{\cV^\otimes}^0, \cC_i \big) \\
		& \simeq  \underset{i \in I}{\lim}  (\iota_0(\cC_i)).
	\end{align*}
This finishes the proof
of the statement.
\end{proof}

We will also need to recall \cite[Definition 5.1.3]{gepner2015enriched}:

\begin{definition}
	Let $\cV^\otimes$ denote a presentably symmetric monoidal \infcat. The \infcat of categorical algebras $\mathrm{Alg}_{\mathrm{cat}}(\cV^\otimes)$ is naturally tensored over $\mathrm{Alg}_{\mathrm{cat}}(\cS)$, cf.
	\cite[Corollary 4.3.17]{gepner2015enriched}. Let $[1]$ denote the ordered category
	$\{0, 1\}$ regarded as an \infcat. We denote by
		\[
			[1]_{\cV} \coloneqq [1] \otimes \mathbf{1}_{\cV^\otimes} \in \mathrm{Alg}_{\mathrm{cat}}(\cV^\otimes),
		\]
	Let $\cC \in \Cat(\cV^\otimes)$. We define the \emph{space of morphisms} of $\cC$ as the mapping space
		\[
			\Map_{\mathrm{Alg}_{\mathrm{cat}}(\cV^\otimes)}([1]_{\cV}, \cC).
		\]
	We refer the reader to \cite[Lemma 5.1.4.]{gepner2015enriched} for a justifiication of this definition.
\end{definition}

\begin{notation}
	The inclusion morphisms $\{ 0 \} \to [1] \leftarrow \{ 1\}$ induce a natural \emph{source-target} morphism
		\[
			(s, t) \colon \Map_{\mathrm{Alg}_{\mathrm{cat}}(\cV^\otimes)}([1]_{\cV^\otimes}, \cC) \to \Map_{\mathrm{Alg}_{\mathrm{cat}}(\cV^\otimes)}(E_{\cV^\otimes}^0, \cC)^{\times 2}.
		\]
	Given $x, y \in \iota(\cC)$ we denote by
		\[
			\underline{\Map}_{x \coprod y / \cC} ([1]_{\cV^\otimes}, \cC) \coloneqq \Map_{\mathrm{Alg}_{\mathrm{cat}}(\cV^\otimes)}([1]_{\cV^\otimes}, \cC) \times_{ \Map_{\mathrm{Alg}_{\mathrm{cat}}(\cV^\otimes)}(E_{\cV^\otimes}^0, \cC)^{\times 2}} \{ (x, y) \},
		\]
	the \emph{space of morphisms in $\cC$ with source $x \in \iota(\cC)$ and target $y \in \iota(\cC)$}. We shall also denote by
		\begin{align*}
		 	\cC(x, y) &  \coloneqq \underline{\Map}_{x \coprod y / \cC} ([1]_{\cV^\otimes}, \cC), \\
			\mathbf{Map}_{\cC}(x, y) &  \coloneqq \cC(x, y).
		\end{align*}
\end{notation}

\begin{lemma} \label{lim:tech}
Let $\cV^\otimes$ be a presentably symmetric monoidal \infcat. Suppose we are given a small diagram
$F \colon I \to \Cat( \cV^\otimes)$. Then the limit 
	\[
		\cC \coloneqq \underset{I}{\lim} F,
	\]
exists in the \infcat
$\Cat(\cV^\otimes)$. Furthermore, given any two objects $x, \ y \in \cC$, we have
an equivalence of mapping objects
	\[
		\cC (x, y) \simeq \underset{i \in I}{\lim} \cC_i (x_i, y_i) \in \cV^\otimes,
	\]
where, for each $i \in  I$, $\cC_i \coloneqq F(i)$ and $x_i$, $y_i$ denote the images of both $x$ and $y$
under the projection functor $\cC \to \cC_i$, respectively.
\end{lemma}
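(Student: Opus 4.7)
The plan is as follows. Since $\cV^\otimes$ is presentably symmetric monoidal, by the Gepner--Haugseng theory \cite{gepner2015enriched} the $\infty$-category $\Cat(\cV^\otimes)$ of $\cV$-enriched $\infty$-categories is itself presentable. In particular, it admits all small limits, so the first assertion of the lemma, the existence of $\cC = \lim_I F$ in $\Cat(\cV^\otimes)$, is immediate.

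For the mapping object formula I would work with the model of $\Cat(\cV)$ as the full subcategory of \emph{complete} categorical algebras in $\cV$ inside the larger $\infty$-category $\mathrm{PreCat}(\cV)$ of pre-$\cV$-categories, as in \cite[\S 4]{gepner2015enriched}. A pre-$\cV$-category $\cD$ is determined by a space of objects $\mathrm{Ob}(\cD) \in \cS$ together with a mapping object $\cD(x,y) \in \cV$ for each pair $(x,y)$ of objects, plus coherent composition data; equivalently, by a simplicial object $\cD_\bullet$ satisfying a Segal condition, with $\cD_0$ groupoidal. Since completion is a reflective localization, the inclusion $\Cat(\cV) \hookrightarrow \mathrm{PreCat}(\cV)$ preserves all small limits, so it suffices to compute $\lim_I F$ in $\mathrm{PreCat}(\cV)$.

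Next I would check that in $\mathrm{PreCat}(\cV)$ limits are computed componentwise on the underlying simplicial data. Concretely, $\mathrm{Ob}(\cC) \simeq \lim_i \mathrm{Ob}(\cC_i)$ in $\cS$, and for any two objects $x, y \in \cC$ with images $x_i, y_i \in \cC_i$, the mapping object is extracted as a pullback in $\cV$:
\[
\cC(x,y) \simeq \{(x,y)\} \times_{\mathrm{Ob}(\cC) \times \mathrm{Ob}(\cC)} \cC_1,
\]
where $\cC_1 \in \cV$ denotes the ``object of arrows''. Both the space of objects and the degree-one piece are preserved under the limit in $\mathrm{PreCat}(\cV)$, and since pullbacks in $\cV$ commute with arbitrary small limits, we obtain the chain of equivalences
\[
\cC(x,y) \simeq \lim_i \Bigl( \{(x_i, y_i)\} \times_{\mathrm{Ob}(\cC_i) \times \mathrm{Ob}(\cC_i)} (\cC_i)_1 \Bigr) \simeq \lim_i \cC_i(x_i, y_i),
\]
which is the desired formula.

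The main obstacle I anticipate is a careful reconciliation of the two models of enriched $\infty$-categories appearing in the paper, namely Lurie's pseudo-enrichment via $\cL\cM^\otimes$ and Gepner--Haugseng's categorical algebras, so that the mapping object $\cC(x,y)$ used in the statement of the lemma really agrees with the one extracted from the simplicial (Segal) description above. Once this comparison is in place, as recorded in the remark preceding the lemma in the Cartesian setting, the result is a formal consequence of the two principles invoked: limits of pre-categories are computed componentwise, and fiber products in $\cV$ commute with arbitrary small limits.
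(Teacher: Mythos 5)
Your approach is correct but takes a genuinely different route from the paper's. The paper invokes the characterization
\[
\cC(x, y) \simeq \underline{\Map}_{*\coprod*/\Cat(\cV^\otimes)}\big(E^1, \cC\big),
\]
where $E^1$ is the walking $\cV$-arrow under $*\coprod*$ and $\underline{\Map}$ is the $\cV$-valued mapping object; this functor in its second variable preserves limits, so passing the limit over $I$ through and applying the characterization again to each $\cC_i$ gives the formula in one line. You instead descend to the Gepner--Haugseng model of $\Cat(\cV^\otimes)$ as a reflective localization of categorical algebras, note the inclusion preserves small limits, and compute the limit at the level of underlying algebras. The paper's route is shorter once the $E^1$-characterization is in hand; yours unpacks the model and is closer to a first-principles verification. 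Both buy the same conclusion, and your explicit appeal to presentability of $\Cat(\cV^\otimes)$ to justify existence of the limit is a point the paper leaves implicit.

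One point to tighten: the extraction formula $\cC(x,y) \simeq \{(x,y)\} \times_{\mathrm{Ob}(\cC)\times\mathrm{Ob}(\cC)} \cC_1$ is the Segal-space description, which makes literal sense only when $\cV^\otimes$ is Cartesian monoidal, so that the space of objects embeds in $\cV$ and the fiber product is formed there with a single ``object of arrows'' $\cC_1 \in \cV$. But the lemma is later applied with $\cV^\otimes = \Sp_{\mathrm{pro}}(p)$ and $\Sp_{\mathrm{pro}}(p)_{p^{-1}}$, which are stable and not Cartesian, so no such $\cC_1$ exists in $\cV$. For general presentably symmetric monoidal $\cV^\otimes$ you should instead use that a categorical algebra with object space $X$ is a $\mathbf \Delta^\op_X$-algebra in $\cV$, that a limit in $\mathrm{Alg}_{\mathrm{cat}}(\cV)$ has object space $\lim_i X_i$ and is computed objectwise in $\Fun(\mathbf \Delta^\op_X, \cV)$ after pulling back along the structure maps $X \to X_i$, and then evaluate at the length-one cell $(x,y) \in \mathbf \Delta^\op_X$ to land directly on $\lim_i \cC_i(x_i, y_i)$. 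This is a presentational repair, not a genuine gap; the overall strategy of your argument is sound.
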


\begin{proof} The existence statement follows from presentability of $\Cat(\cV^\otimes)$, cf. \cite[Corollary 5.4.5]{gepner2015enriched}.
In this case, we have a chain of equivalences in $\cV^\otimes$
	\begin{align*}
		\cC(x, y) & \simeq \\
		& \simeq  \underline{\Map}_{ x \coprod y / \cC)} \big( [1]_{\cV}, \cC)   \\
		& \simeq \underset{i \in I} \lim \underline{\Map}_{x_i \coprod y_i / \cC} \big([1]_{\cV}, \cC_i \big) \quad \mathrm{(by \ \cref{lem:limits_of_enr_cats_on_objects})} \\
		& \simeq  \underset{i \in I}{\lim} \cC_i (x_i, y_i ).
	\end{align*}
This finishes the proof
of the statement.
\end{proof}

We now recall the definition of fully faithful functors, cf. \cite[Definition 5.3.1]{gepner2015enriched}

\begin{definition}
	Let $F \colon \cC \to \cD$ be a functor between $\cV^\otimes$-enriched \infcats. We say that that $F$ is \emph{fully faithful} if for every $C, \ D \in \iota_0(\cC)$, the canonical morphism
		\[
			\mathbf{Map}_{\cC}(C, D) \to \mathbf{Map}_{\cD}(F(C), F(D)),
		\]
	is an equivalence in $\cV$.
\end{definition}

We will also need to introduce the notion of the \emph{space of equivalences} in enriched \infcats, cf. \cite[Definition 5.1.6]{gepner2015enriched}:

\begin{definition} Let $\cV^\otimes$ denote a presentably symmetric monoidal \infcat.
	We define the trivial $\cV^\otimes$-enriched \infcat $E^1$ as the composite
		\[
			\Delta^\op_{[1]} \to \Delta^\op \xrightarrow{\mathbf{1}_{\cV^\otimes}} \cV^\otimes,
		\]
	where $\Delta^\op_{[1]}$ is defined as in \cite[Definition 4.1.1]{gepner2015enriched}. Let $\cC \in \Cat(\cV^\otimes)$. We define the \emph{space of equivalences} in $\cC$ as 
		\[
			\Map_{\mathrm{Alg}_{\mathrm{cat}}(\cV^\otimes)}(E^1, \cC) \in \cS.
		\]
	See \cite[Proposition 5.1.11]{gepner2015enriched} for a justification of this definition.
\end{definition}	
We also introduce the notion of essentially surjective functor, cf. \cite[Definition 5.3.3]{gepner2015enriched}.

\begin{definition}
	A functor $F \colon \cC \to \cD$ between $\cV^\otimes$-enriched \infcats is \emph{essentially surjective} if and only if for every $D \in \iota_0 \cD$, there exists $C \in \iota_0 \cC$ such that there exists an equivalence
		\[
		 	E^1 \to \cD,
		\]
	whose both target and source are equivalent to $D$  and $F(C)$, respectively.
\end{definition}

The following result will prove to be very useful in our study:

\begin{proposition} \label{prop:ff_es_equivalences_enriched}
 Let $\cV^\otimes$ denote a presentably symmetric monoidal \infcat. Let $F \colon \cC \to \cD$ be a functor between $\cV^\otimes$-enriched \infcats. Then $F$ is an equivalence in $\Cat(\cV^\otimes)$ if and only if it is both essentially surjective and
 fully faithful.
\end{proposition}

\begin{proof}
	The content of the proposition is contained in \cite[ Corollary 5.6.3 and Corollary 5.6.4]{gepner2015enriched}.
\end{proof}

\begin{proposition} \label{prop:monoids embed ff in enriched infcats}
Let $\cV^\otimes$ denote a presentably Cartesian symmetric monoidal \infcat. Then we have a natural fully faithful embedding
	\[
		F \colon \Mon_{\bE_1}(\cV^\otimes) \to \Cat(\cV^\otimes).
	\]
\end{proposition}

\begin{proof}
	Since the symmetric monoidal structure on $\cV^\otimes$ is Cartesian, it is clear from the definitions, cf. \cite[Definition 4.3.1]{gepner2015enriched}, that we have a natural fully faithful functor
		\[
			F \colon \Mon_{\bE_1}(\cV^\otimes) \to \mathrm{Alg}_{\mathrm{cat}}(\cV^\otimes).
		\]
	It suffices to show that $F$ factors through the right adjoint inclusion functor $\Cat(\cV^\otimes) \subseteq \mathrm{Alg}_{\mathrm{cat}}(\cV^\otimes).$ Let $M \in \Mon_{\bE_1}(\cV^\otimes)$. Thanks to \cite[Proposition 5.1.11]{gepner2015enriched},
	it follows that $F(M)$ has contractible spaces of objects and equivalences.
	The result now follows from \cite[Corollary 5.2.10]{gepner2015enriched} combined with \cite[Proposition 5.4.4]{gepner2015enriched}.
\end{proof}

\begin{remark} Let $\cV^\otimes$ denote a presentably Cartesian symmetric monoidal \infcat.
	Given $M \in \Mon_{\bE_1}(\cV^\otimes)$, we can regard the latter as an $\cV^\otimes$-\infcat whose space of objects is contractible. If we denote $\mathbf 1 \in \iota_0 ( F( M) )$ the underlying object, we have that
		\begin{align*}
			M  & \simeq \mathbf{Map}_{F(M)}(\mathbf 1, \mathbf 1)  \\
											& \in \cV^\otimes.
		\end{align*}
\end{remark}

\subsection{Enriched pro-$p$-nilpotent \infcats} In this \S, we will introduce the \infcat of \emph{enriched pro-$p$-nilpotent} \infcats. We will compare our definitions with other known constructions. The material in this section will be mainly used to study enrichments of the \infcats
of $\kc$-adic perfect modules.

\begin{definition}
We define the \infcat of \emph{$p$-nilpotent $\Ok$-modules,} denoted 
	\[
		 \Mod_{\Ok} ^{\mathrm{nil}} \subseteq \Mod_{\Ok},
	\]
as the full subcategory of $\Mod_{\Ok}$ spanned by $p$-nilpotent $\kc$-modules, see \cite[Definition 7.1.1.1]{lurie2016spectral} for a definition of the latter.
\end{definition}

\begin{remark}
	The \infcat $\Mod_{\Ok}^{\mathrm{nil}}$ is a stable \infcat.
\end{remark}

\begin{definition}
	The \infcat of \emph{pro-objects of $p$-nilpotent $\kc$-modules} is defined as the \infcat $\pro(\Mod_{\kc}^{\mathrm{nil}}) $.
\end{definition}

\begin{lemma} \label{lem:sym_mon_structure_on_pro_nil_modules}
The \infcat $\Mod_{\Ok}^{ \mathrm{nil}}$ admits a naturally symmetric monoidal structure induced from the $\otimes_{\kc}$-product
on $\Mod_{\Ok}$.  Similarly, the \infcat $\pro(\Mod_{\kc}^{\mathrm{nil}})$ admits a canonical symmetric monoidal structure such that the canonical inclusion functor 
	\[\Mod_{\Ok}^{\mathrm{nil}} \subseteq \pro( \Mod_{\Ok}^{ \mathrm{nil}}),\] is
symmetric monoidal.
\end{lemma}

\begin{proof} The \infcat $\Mod_{\Ok}^{\mathrm{nil}}$ admits a natural symmetric monoidal structure induced from the usual one on $\Mod_{\Ok}$.  We first show that given $M,  N \in \Mod_{\Ok}^{\mathrm{nil}}$, their tensor product

	\[
		M \otimes_{\Ok} N \in \Mod_{\Ok},
	\]
is still nilpotent. This last assertion follows from the chain of equivalences
	\begin{align*}
		(M \otimes_{\Ok} N) \otimes_{\Ok} k & \simeq ( M \otimes_{\Ok} k) \otimes_k (N \otimes_{\Ok} k) \\
								       & \simeq 0,
	\end{align*}
where the last equivalence follows from nilpotence of both $M$ and $N$. 
Moreover, \cite[Proposition 8.2.2.5]{lurie2016spectral} implies that $\Mod_{\kc}^{\mathrm{nil}}$ admits a uniquely defined unit object, denoted $\kc^\mathrm{nil}$.

Thanks to the dual of \cite[Corollary 6.3.1.13]{lurie2012higher} the \infcat of pro-objects $\pro(\Mod_{\Ok}^{\mathrm{nil}} )$ admits a uniquely defined, up to contractible indeterminacy,
symmetric monoidal structure for which,
given 
$M \coloneqq \lim_{\alpha} M_{\alpha} $ and $N \coloneqq \lim_\beta N_\beta$ in $\Mod_{\Ok}^{\mathrm{pro \textrm{-} nil}}, $ we have the formula
	\[
		M \otimes N \simeq \lim_{\alpha, \ \beta} \big( M_\alpha \otimes_{\kc^\mathrm{nil}} N_\beta)
	\]
in the \infcat $\Mod_{\Ok}^{\mathrm{pro \textrm{-} nil}} $. The second assertion of the Lemma follows from the previous considerations.
\end{proof}

\begin{remark}
Thanks to the dual of \cite[Proposition 1.1.3.6]{lurie2012higher} it follows that $\pro(\Mod_{\Ok}^{\mathrm{nil}})$ is a stable \infcat.
\end{remark}

\begin{remark}
 Let $\kc \coloneqq \{ \cO_{k, n} \}_{n \ge 1} \in \pro(\Mod_{\Ok}^\mathrm{nil})$. The latter admits a natural structure of a commutative algebra object on $\pro(\Mod_{\Ok}^\mathrm{nil})$, with respect to the symmetric monoidal structure considered on \cref{lem:sym_mon_structure_on_pro_nil_modules}.
\end{remark} 

\begin{definition}
We shall denote by $\Mod_{\Ok}^{\mathrm{pro \textrm{-} nil}} \coloneqq \Mod_{\Ok}(\pro \big( ( \Mod_{\kc} )_\nil \big))$, the \infcat of \emph{pro-$p$-nilpotent $\kc$-modules}.
\end{definition}

\begin{remark} \label{rem:conservativity of kc-mod}
Notice that, in particular, the forgetful functor $G \colon \Mod_{\Ok}^{\mathrm{pro \textrm{-} nil}} \to \pro(\Mod_{\kc}^\mathrm{nil})$ is conservative and admits a (symmetric monoidal) left adjoint, cf. \cite[Theorem 6.2.2.5]{lurie2012higher}.
\end{remark}

\begin{corollary} \label{cor:i_nil_at_the_level_of_enriched_cats}
	The inclusion functor $\Mod_{\Ok}^{\mathrm{nil}} \subseteq \Mod_{\Ok}^{\mathrm{pro} \textrm{-} \mathrm{nil}}$ induces a well defined, up to contractible indeterminacy, functor
		\[
			i_{\mathrm{nil}} \colon \Cat(\Mod_{\Ok}^{\mathrm{nil}}) \to \Cat(\Mod_{\Ok}^{\mathrm{pro} \textrm{-} \mathrm{nil}}).
		\]	
\end{corollary}

\begin{proof}
	It follows immediately from \cite[Corollary 5.7.6]{gepner2015enriched} combined with \cref{lem:sym_mon_structure_on_pro_nil_modules}.
\end{proof}

\begin{lemma} \label{lem:mat_on_modules_is_lax_sym_mon}
	Consider the materialization functor $\Mat_{\Ok} \colon \Mod_{\Ok}^{\mathrm{pro} \textrm{-} \mathrm{nil}} \to \Mod_{\Ok}$ given on objects by the formula
		\[
			\{ M_n \} \in  \Mod_{\Ok}^{\mathrm{pro} \textrm{-} \mathrm{nil}} \mapsto \lim_{n} M_n \in  \Mod_{\Ok}.
		\]
	Then the functor $\Mat_{\Ok}$ is lax symmetric monoidal.
\end{lemma}

\begin{proof} We first observe that the functor $\Mat_{\Ok}$ can be described as the composite
	\[
		\Map_{\pro(\Mod_{\kc}^{\mathrm{nil}})}(\kc^\mathrm{nil}, - ) \circ G \colon \Mod_{\kc}^{\mathrm{pro} \textrm{-} \mathrm{nil}} \to \Mod_{\kc},
	\]
	where $G$ is as in \cref{rem:conservativity of kc-mod}. The latter being lax symmetric monoidal we are reduced to show that
		\[
			\Map_{\pro(\Mod_{\kc}^{\mathrm{nil}})}(\kc^\mathrm{nil}, - ) \colon \pro(\Mod_{\kc}^{\mathrm{nil}}) \to \Mod_{\kc},
		\]
	is lax symmetric monoidal.
	Let $\{ M_n \}_n, \ \{ N_m \}_m \in \pro( \Mod_{\Ok}^{\mathrm{nil}})$. Then the transition morphisms
		\begin{align*}
			\{ M_n
			 \}_n \to M_n, \\
			\{ N_m \}_\beta \to N_m,
		\end{align*}
	induce canonically defined morphisms
		\begin{align} \label{eq:trans_of_tensor_products}
			\lim_{n} M_n \to M_n , \\
			\lim_{m} N_m \to N_m,
		\end{align}
	in the \infcat $\Mod_{\Ok}$.
	By taking tensor products in \eqref{eq:trans_of_tensor_products} above, we obtain a canonical morphism
		\[
			\lim_{n}(M_n) \otimes_{\kc} \lim_{m} (N_m) \to \lim_{n, m} (M_n \otimes_{\kc} N_n),
		\]
	in $\Mod_{\kc}$. This observation provides us with a natural morphism
		\[
			\theta_{M, N} \colon \Mat_{\kc} ( \{M_n \}) \otimes_{\kc} \Mat_{\kc}( \{ N_m \}) \to \Mat_{\kc}( \{M_n\} \otimes_{\kc} \{N_m\}),
		\]
	in the \infcat $\Mod_{\kc}$. The morphisms $\{\theta_{M, N} \}$ assemble thus inducing the lax symmetric monoidal structure on $\Mat_{\kc}$, the result now follows.
\end{proof}

\begin{construction} \label{constr_of_forget_from_kc_mod_to_spaces}
	Consider the forgetful functor $F \colon \Mod_{\Ok} \to \Sp$. The functor $F$ is lax symmetric monoidal since it is right adjoint to the base change functor
		\[
			- \otimes \Ok \colon \Sp \to \Mod_{\Ok}.
		\]
	The latter being symmetric monoidal. Precomposing with the lax symmetric monoidal functor
		\[
			\Omega^\infty \colon \Sp^\otimes \to \cS^\times,
		\]
	we obtain a lax symmetric monoidal functor $\Mod_{\Ok} \to \cS$.
\end{construction}

\begin{lemma} \label{lem:mat_on_pro_nil}
The composite functor $\Omega^\infty \circ F \circ \Mat_{\Ok}$ induces a well defined functor
		\[
			\Mat_{\kc}^{\mathrm{cat}} \colon \Cat(\ind(\Mod_{\kc}^{\mathrm{pro} \textrm{-} \mathrm{nil}})) \to \Cat.
		\]
\end{lemma}

\begin{proof}
It is an immediate consequence of \cite[Corollary 5.7.6]{gepner2015enriched} combined with \cref{constr_of_forget_from_kc_mod_to_spaces}.
\end{proof} 

\begin{definition}
	We shall refer to the functor $\Mat_{\kc}^\mathrm{cat}$ as the categorical \emph{$\kc$-linear materialization functor}.
\end{definition}

\begin{proposition} \label{prop:mat_commutes_with_cofiltered_limits}
	The materialization functor $\Mat_{\Ok}^\mathrm{cat}$ commutes with cofiltered limits.
\end{proposition}

\begin{proof} Let $F \colon I \to \Cat(\Mod_{\Ok}^{\mathrm{pro}\textrm{-}\mathrm{nil}})$ be a cofiltered diagram of $\Mod_{\Ok}^{\mathrm{pro}\textrm{-}\mathrm{nil}}$-enriched \infcats. Denote by $\cC_i \coloneqq F(i) \in \Cat( \Mod_{\Ok}^{\mathrm{pro}\textrm{-}\mathrm{nil}})$. We set $\cC \coloneqq \lim_I F \in \Cat(\Mod_{\Ok}^{\mathrm{pro}\textrm{-}\mathrm{nil}})$.
 	By the universal property of limits we have a canonical functor
		\[	
			\theta \colon \Mat_{\Ok}^\mathrm{cat} (\cC) \to \lim_I \Mat_{\Ok}^\mathrm{cat} ( \cC_i),			
		\]
	of (usual) \infcats. We claim that $\theta$ is an equivalence of \infcats. We start by proving fully faithfulness of $\theta$.
	First observe that the composite functor 
		\[ \Omega^\infty \circ F \circ \Mat_{\kc} \colon \Mod_{\Ok}^{\mathrm{pro}\textrm{-}\mathrm{nil}} \to \cS,\]
	commutes with cofiltered limits. It is clear that $\Mat_{\Ok}$ does commutes with cofiltered limits. The conclusion now follows from the fact that
	both $F$ and $\Omega^\infty$ are right adjoint functors. Fully faithfulness now follows from \cref{lim:tech} combined with \cref{rem:conservativity of kc-mod}.
	Essential surjectivity follows from \cref{lem:limits_of_enr_cats_on_objects} combined with \cite[Lemma 5.1.2]{gepner2015enriched} together with \cite[Theorem 3.3.3.2]{lurie2009higher}.
\end{proof}

\subsection{Enriched pro-$p$-nilpotent perfect modules} We study the different enrichments on the \infcat of perfect $A$-modules, denoted $\Perf(A)$, for $A \in \adCAlg$. Recall from the definition of $\adCAlg$ that $A$ is necessarily $(p)$-complete. For this reason, we have the following statement:

\begin{lemma} \label{lem:perf_complete}
	Let $A \in \adCAlg$. Then the natural functor
		\[
			f \colon \Coh(A)  \to \lim_{n \ge 1} \Coh(A_n),
		\]
	is an equivalence of \infcats. The analogous statement holds for the stable \infcat of perfect $A$-modules, $\Perf(A)$.
\end{lemma}

\begin{proof} Let $i_n \colon A \to A_n $ denote the natural morphism, for each $n \ge 1$. We denote by 
		\[
			i_{n, *} \colon \Mod_{A_n} \to \Mod_A,
		\]
	the associated forgetful functor. Since $i_n$ is a complete intersection morphism, the functor $i_{n, *}$ restricts to well defined functor
		\[
			i_{n, *} \colon \Perf(A_n ) \to \Perf(A) , \quad i_{n, *} \colon \Coh(A_n) \to \Coh(A),
		\] 
	right adjoint to the base change functor $ - \otimes_A A_n$. For this reason, the functor
		\[
			F \colon \Coh(A) \to \lim_{n \ge 1} \Coh(A_n),		
		\]
	given on objects by the formula	
		\[
			M \in \Coh(A) \mapsto \{ M \otimes_{\kc} \cO_{k, n} \}_{n \ge 1}	\in \lim_{n \ge 1} \Coh(A),
		\]
	admits a natural right adjoint. Indeed, the latter can be described by the formula
		\[
			\{ M_n \}_{n \ge 1} \in \lim_{n \ge 1} \Coh(A_n) \mapsto \lim_{n \ge 1}(i_{n, *}(M_n)) \in \Coh(A).
		\]
	Let us denoted such functor by
		\[
			G \colon \lim_{n \ge 1} \Coh(A_n) \to \Coh(A).
		\]
	We claim that both $F$ and $G$ are fully faithful, which proves the claim.  To show this we claim that for every $M \in \Coh(A)$ the natural morphism
		\[
			M \to \lim_{n \ge 1} i_{n, *} (M \otimes_A A_n),
		\]
	is an equivalence in $\Coh(A)$. Consider the Koszul resolution
		\[
			M \xrightarrow{t^n} M \to i_{n , *} (M \otimes_A A_n).
		\]
	By passing to the limit over $n \ge 1$, we obtain a fiber sequence of the form
		\[
			\lim_t M \to M \to \lim_{n \ge 1} i_{n, *} (M \otimes_{A } A_n).
		\]
	We claim that $\lim_t M \simeq 0$. Let $N \in \Perf(A)$ such that we have a morphism $h \colon N \to M$ whose cofiber, denoted $P$, is $m$-truncated for some $m \ge 0$. Then
		\[
			\lim_t N \to \lim_t M \to \lim_t P,
		\]
	is a fiber sequence such that $\lim_t P$ is $(m-1)$-truncated (indeed, in the \infcat $\Mod_A$ limits have at most non-vanishing $\lim^1$). By varying $N \in \Perf(A)$ such that the cofiber of $h$, $P$, is $m$-connective, for
	increasing $m \ge 0$, we reduce ourselves to prove the statement in the case where $M = N$ is perfect itself. 
	
	Since $N \in \Perf(A)$, we have that $N$ can be realized as a finite sequence of finite colimits and retracts of $A \in \Mod_A$. Since $\lim_t$ commutes with finite colimits and retracts, we reduce ourselves to prove that
		\[
			\lim_t A \simeq 0.
		\]
	But this follows from the fact that the morphism $A \to \lim_{n \ge 1} i_{n, *} A_n$ is an equivalence, since $A$ is itself $(p)$-complete.
	Let us now prove that both $f$ and $g$ are fully faithful. Let $M , N \in \Coh(A)$, then we have a chain of natural equivalences
		\begin{align*}
			\Map_{\Coh(A)}(M, N) & \simeq \lim_{n \ge 1} \Map_{\Coh(A)}(M, i_{n, *} (N \otimes_A A_n)) \\
							  & \simeq \lim_{n \ge 1} \Map_{\Coh(A_n)}(M \otimes_A A_n, N \otimes_A A_n) \\
							  & \simeq \Map_{\lim_{n \ge 1} \Coh(A_n)}(f(M), f(N)).
		\end{align*}
	By a similar reasoning as before we conclude that $g$ is itself fully faithful. Indeed, the only non-trivial step is to show that given $\{M_n \}_{n \ge 1} \in \lim_{n \ge 1}\Coh(A_n)$, then
		\[
			\lim_{n \ge1 } i_{n, *} (M_n) \otimes_A A_m \simeq i_{m, *}(M_m),
		\]
	for every $m \ge 1$. But this is clear by the existence of a fiber sequence
		\[
			\lim_{n \ge 1} M_n \xrightarrow{t^m} \lim_{n \ge 1}M_n \to i_{m, *}(M_m),
		\]
	which identifies the latter with $i_{m, *}(\lim_{n \ge1 } i_{m, *} (M_n) \otimes_A A_m)$, as desired.
	It is clear from the preceding considerations that the analogous statement holds when we $\Coh$ replaced by $\Perf$. 
\end{proof}

\begin{construction} \label{const:enrichment_over_pro_nil_mod} Let $n \ge 1$, be an integer.
	Consider the functor
		\[
			g_n \colon \Mod_{\cO_{k, n}} \to \Mod_{\Ok},
		\]
	right adjoint to the base change functor $- \otimes_{\Ok} \cO_{k, n} \colon \Mod_{\Ok} \to \Mod_{\cO_{k, n}}.$ The functor $g_n$ factors through the full subcategory of nilpotent objects
		\[
			\Mod_{\Ok}^{\mathrm{nil}} \subseteq \Mod_{\Ok}.
		\]
	For this reason, for every $A \in \adCAlg$ and $n \ge 1$ the \infcat $\Perf(A_n)$, (resp., $\Coh(A_n)$) are naturally enriched over the \infcat $\Mod_{\Ok}^{\mathrm{nil}}$. Thanks to \cref{const:enrichment_over_pro_nil_mod} it follows that
	we can consider $\Perf(A_n)$, (resp., $\Coh(A_n)$) as enriched over the symmetric monoidal \infcat $\Mod_{\Ok}^{\mathrm{pro} \textrm{-} \mathrm{nil}}$.
\end{construction}

\begin{definition} Let $A \in \adCAlg$ and $n \ge 1$, and consider $\Perf(A_n) \in \Cat(\Mod_{\Ok}^{\mathrm{pro}\textrm{-}\mathrm{nil}})$ via \cref{const:enrichment_over_pro_nil_mod}.
	We define 
		\[\mathbf{Perf}(A) \coloneqq \lim_{n \ge 1} \Perf(A_n) ,\]
	in $\Cat(\Mod_{\Ok}^{\mathrm{pro}\textrm{-}\mathrm{nil}})$. Similarly, we define $\mathbf{Coh}^+(A) \coloneqq \lim_{n \ge 1} \Coh(A_n)$. We refer these as the \emph{pro-$p$-nilpotent enriched \infcat of perfect $A$-modules} and \emph{pro-$p$-nilpotent enriched \infcat of almost perfect $A$-modules}, respectively.
\end{definition}

\begin{remark}
	Thanks to \cref{lim:tech} it follows that given objects $M , \ N \in \mathbf{Perf}_{\Ok}(A) $ we have an equivalence of pro-objects
		\[
			\mathbf{Map}_{\mathbf{Perf}_{\Ok}(A)}(M, N) \simeq \{ \underline{ \Map}_{\Perf(A_n)}(M_n, N_n) \}_{n \ge 1} 
		\]
	in the \infcat $\Mod_{\Ok}^{\mathrm{pro}\textrm{-}\mathrm{nil}}$, where $\underline{ \Map}$ denotes the $\Mod_{\kc}^\mathrm{nil}$-enriched mapping object.
\end{remark}
\begin{lemma} \label{lem: pro-p nilpotent enrichement of perf}
	Let $A \in \adCAlg$. The $\kc$-linear materializations 
		\[
			\Mat_{\kc}^\mathrm{cat} (\mathbf{Perf}_{\Ok}(A))\quad \mathrm{and} \quad \Mat_{\kc}^\mathrm{cat}(\mathbf{Coh}^+(A)  \in \Cat( \cS) 
		\]
	are naturally equivalent to $\Perf(A)$ and $\Coh(A)$, respectively.
\end{lemma}

\begin{proof}
The assertion is a direct consequence of \cref{prop:mat_commutes_with_cofiltered_limits} combined with \cref{lem:perf_complete}.
\end{proof}

\subsection{Enriched  ind-pro-$p$-\infcats} 
\begin{definition}
Let $\kc \in \Mod_{\Ok}^{\mathrm{pro \textrm{-} nil}}$ denote the unit object. Then multiplication by $p$ defines an endomorphism
	\[
		p \colon \kc \to \kc.
	\]
For this reason, given any $M \in  \Mod_{\Ok}^{\mathrm{pro \textrm{-} nil}}$, we have an induced endomorphism of multiplication by $p$, which we denote by 
	\[
		p \colon M \to M.
	\]
\end{definition}

\begin{construction}
Consider now the \infcat $\ind \big( \Mod_{\Ok}^{\mathrm{pro \textrm{-} nil}} \big)$ of ind-objects on the presentable stable
\infcat $\Mod_{\Ok}^{\mathrm{pro \textrm{-} nil}} $. Consider the functor
	\[
		- \otimes_{\Ok} k \colon \ind \big( \Mod_{\Ok}^{\mathrm{pro \textrm{-} nil}}  \big) \to \ind \big( \Mod_{\Ok}^{\mathrm{pro \textrm{-} nil}}  \big)
	\]
given informally on objects by the formula
	\[
		M \mapsto M \otimes_{\Ok} k \coloneqq \colim_{\textrm{mult } p} M,
	\]
where the latter denotes the filtered colimit induced by multiplication by the $p$-map $p \colon M \to M$.
\end{construction}

\begin{lemma} \label{lem:enriched_sym_monoidal_base_change}
The \infcat $\ind(\Mod_{\Ok}^{\mathrm{pro \textrm{-} nil}})$ admits a natural symmetric monoidal structure induced from the natural symmetric monoidal structure
on $\Mod_{\Ok}^{\mathrm{pro \textrm{-} nil}} $. Moreover, the functor
	\[
		- \otimes_{\Ok} k \colon \Mod_{\Ok}^{\mathrm{pro \textrm{-} nil}} \to \ind(\Mod_{\Ok}^{\mathrm{pro \textrm{-} nil}})
	\]
admits an essentially unique natural extension to a symmetric monoidal functor \[(- \otimes_{\Ok} k)^\otimes \colon 
(\Mod_{\Ok}^{\mathrm{pro \textrm{-} nil}})^\otimes  \to \ind(\Mod_{\Ok}^{\mathrm{pro \textrm{-} nil}})^\otimes.\]
\end{lemma}

\begin{proof}
The symmetric monoidal structure on $\ind \big( \Mod_{\Ok}^{\mathrm{pro \textrm{-} nil}}  \big)$ is induced by the symmetric monoidal
structure on $\Mod_{\Ok}^{\mathrm{pro \textrm{-} nil}} $, by extending it via filtered colimits. This defines a well defined symmetric monoidal structure on $\ind \big( \Mod_{\Ok}^{\mathrm{pro \textrm{-} nil}}  \big)$, cf. \cite[Corollary 6.3.1.13]{lurie2012higher}.
By a direct computation, given $M , \ M' \in
\Mod_{\Ok}^{\mathrm{pro \textrm{-} nil}} $ we have natural equivalences 
	\begin{align*}
		(M \otimes_{\Ok} k) \otimes (M' \otimes_{\Ok} k) & \simeq \big( M \otimes M' \big)  \otimes_{\Ok} k \\
											& \in \ind( \Mod_{\Ok}^{\mathrm{pro \textrm{-} nil}}) ,
	\end{align*}
which establishes the second assertion of the Lemma.
\end{proof}

\begin{corollary} \label{cor:ind_colimit_by_mult_by_p_induces_functor_of_enriched_cats}
 The symmetric monoidal functor $- \otimes_{\Ok} k \colon \Mod_{\Ok}^{\mathrm{pro \textrm{-} nil}} \to \ind(\Mod_{\Ok}^{\mathrm{pro \textrm{-} nil}})$ induces a well defined functor
 	\[
		- \otimes_{\Ok} k \colon \Cat( \Mod_{\Ok}^{\mathrm{pro \textrm{-} nil}} ) \to \Cat( \ind(  \Mod_{\Ok}^{\mathrm{pro \textrm{-} nil}} )),
	\]
to which we refer as the {base change functor along $\kc \to k$}.
\end{corollary}

\begin{proof}
It is a direct consequence of \cite[Corollary 5.7.6]{gepner2015enriched} combined with \cref{lem:enriched_sym_monoidal_base_change}.
\end{proof}

\begin{definition}
	Let $A \in \adCAlg$. We denote by 
		\[\mathbf{Coh}^+(A \otimes_{\kc} k ) \coloneqq \mathbf{Coh}^+(A) \otimes_{\kc} k,\]
	in $\Cat(\ind(\Mod_{\Ok}^{\mathrm{pro \textrm{-} nil}}))$, where $- \otimes_{\kc} k$ is the functor introduced in  
	\cref{cor:ind_colimit_by_mult_by_p_induces_functor_of_enriched_cats}.
\end{definition}

\begin{construction} \label{const:k linear materialization}
Consider the \emph{$k$-linear materialization functor}
	\[
		\Mat_k \colon \ind(\Mod_{\Ok}^{\mathrm{pro \textrm{-} nil}}) \to \cS,
	\]
given on objects by the formula
	\[
		M \in \ind(\Mod_{\Ok}^{\mathrm{pro \textrm{-} nil}}) \mapsto \Map_{\ind(\Mod_{\Ok}^{\mathrm{pro \textrm{-} nil}})}(\Ok, M) \in \cS.
	\]
The latter functor agrees with the extension under filtered colimits of the lax symmetric monoidal functor, $\Omega^\infty \circ F \circ \mathrm{Mat}_{\kc}$, introduced in \cref{lem:mat_on_pro_nil}. For this reason, $\Mat_k$ is itself
lax-symmetric monoidal. Thanks to \cite[Corollary 5.7.6]{gepner2015enriched}, we have a well defined induced functor
	\begin{equation} \label{eq:mat_functor_ind_mod}
		\Mat_k^{\mathrm{cat}}\colon \Cat(\ind(\Mod_{\Ok}^{\mathrm{pro \textrm{-} nil}})) \to \Cat.
	\end{equation}
We shall refer to $\Mat_k^\mathrm{cat}$ as the \emph{categorical $k$-linear materialization functor}.
\end{construction}

We now prove the following result:

\begin{proposition} \label{prop:mat of coh k-linear}
Let $A \in \adCAlg$ be a derived $\Ok$-adic algebra.
Then we have a natural equivalence of \infcats
	\[
		\Mat^\mathrm{cat}_k( \mathbf{Coh}^+(A \otimes_{\kc} k) ) \simeq \Coh(A \otimes_{\kc} k).
	\]
\end{proposition}

Before giving the proof of \cref{prop:mat of coh k-linear} we will need to introduce some few auxiliary considerations:

\begin{remark}
	Let $M\in \Mod_{\Ok}^{\mathrm{pro \textrm{-} nil}} $.
	 Observe that multiplication by $p$ induces a well defined morphism of mapping spaces
	 	\[\theta_M(p) \colon \Map_{\Mod_{\Ok}^{\mathrm{pro \textrm{-} nil}}}(N, M) \to \Map_{\Mod_{\Ok}^{\mathrm{pro \textrm{-} nil}}}(N, M)\]
	sending every morphism $f \colon M \to N$ to the composite
		\[
			N \xrightarrow{f} M \xrightarrow{p} M,
		\]
	in $\Mod_{\Ok}^{\mathrm{pro \textrm{-} nil}}$. Let $N \in \Mod_{\Ok}^{\textrm{pro-nil}}$ be arbitrary. The filtered colimit
		\[
			\colim_{\textrm{mult by }p } M \in \ind(\Mod_{\Ok}^{\mathrm{pro \textrm{-} nil}} ),
		\]
	induces an equivalence
		\begin{equation} \label{eq:map_equiv_mult_by_p_goes_outside}
			\Map_{\ind(\Mod_{\Ok}^{\mathrm{pro \textrm{-} nil}})} (N, M \otimes_{\Ok} k) \simeq \colim_{\theta_M(p)} \Map_{\Mod_{\Ok}^{\mathrm{pro \textrm{-} nil}} }(N, M),
		\end{equation}
	of mapping spaces. Indeed,
	it is a direct consequence of the fact that $N \in \Mod_{\Ok}^{\mathrm{pro \textrm{-} nil}} $ is, by definition, a compact object in $\ind(\Mod_{\Ok}^{\mathrm{pro \textrm{-} nil}} )$.
	Similarly, we have a natural equivalence
		\[
			\Map_{\ind(\Mod_{\Ok}^{\mathrm{pro \textrm{-} nil}} )}(N \otimes_{\Ok} k, M \otimes_{\Ok} k) \simeq  
			\underset{\theta_N(p)}{\lim} \colim_{\theta_M(p)}\Map_{\Mod_{\Ok}^{\mathrm{pro \textrm{-} nil}}}
			(N, M)
		\]
\end{remark}

\begin{lemma} \label{lem:mapping_spaces_enriched_in_ind_pro_are_computed_by_mult_by_p}
Let $M, \ M' \in \Mod_{\Ok}^{\mathrm{pro \textrm{-} nil}} $. Then there exists a natural morphism
	\[	
		 \Map_{\ind(\Mod_{\Ok}^{\mathrm{pro \textrm{-} nil}}) } \big(N \otimes_{\Ok} k, 
		M \otimes_{\Ok} k  \big) \to  \colim_{ \theta_{M'}(p)}
		\Map_{
		\Mod_{\Ok}^{\mathrm{pro \textrm{-} nil}} } \big(N, M \big) 
	\]
which is furthermore an equivalence of mapping spaces.
\end{lemma}

\begin{proof}
Let $M, \ N \in \Mod_{\Ok}^{\mathrm{pro \textrm{-} nil}}$, we have a chain of natural morphisms of mapping spaces
	\begin{align} \label{eq:com_limit_with_colimit_given_by_mult_by_p}
		\Map_{\ind(\Mod_{\Ok}^{\mathrm{pro \textrm{-} nil}})}(N \otimes_{\Ok} k, M \otimes_{\Ok} k ) & \simeq \underset{\theta_{M}(p)}{ \lim} \colim_{\theta_M(p) } \Map_{\Mod_{\Ok}^{\mathrm{pro \textrm{-} nil}}}(N, M) \\
										& \to  \colim_{\theta_M(p) } \Map_{\Mod_{\Ok}^{\mathrm{pro \textrm{-} nil}}}(N, M).
	\end{align}
We must show that the second natural morphism displayed in \eqref{eq:com_limit_with_colimit_given_by_mult_by_p} is an equivalence. We start by observing that the canonical morphism 
	\[
		\theta_N(p)\colon  \colim_{\theta_M(p) } \Map_{\Mod_{\Ok}^{\mathrm{pro \textrm{-} nil}}}(N, M) \to \colim_{\theta_M(p) } \Map_{\Mod_{\Ok}^{\mathrm{pro \textrm{-} nil}}}(N, M),
	\]
is an equivalence. Indeed, by $\kc$-linearity we have that $\theta_M(p) \simeq \theta_N(p) $ in the mapping space
	\[\Map_{\cS}(\Map_{\Mod_{\Ok}^{\mathrm{pro \textrm{-} nil}}}(N, M), \Map_{\Mod_{\Ok}^{\mathrm{pro \textrm{-} nil}}}(N, M)).\]
Furthermore, for every \[f  \in \Map_{\Mod_{\Ok}^{\mathrm{pro \textrm{-} nil}}}(N, M),\] the composites
	\[	
		N \xrightarrow{f} M \xrightarrow M, \quad N \xrightarrow{p} N \xrightarrow{f} M,
	\]
agree, up to contractible space of choices.
But $\theta_M(p)$ is an equivalence in the colimit
	\[
		\colim_{\theta_M(p) } \Map_{\Mod_{\Ok}^{\mathrm{pro \textrm{-} nil}}}(N, M),
	\]
by construction.
We conclude that the transition morphisms in the diagram
	\[
	 	\dots \xrightarrow{\theta_N(p)} \colim_{\theta_M(p) } \Map_{\Mod_{\Ok}^{\mathrm{pro \textrm{-} nil}}}(N, M) \xrightarrow{\theta_N(p)} \colim_{\theta_M(p) } \Map_{\Mod_{\Ok}^{\mathrm{pro \textrm{-} nil}}}(N, M),
	\]
are equivalences. We now conclude that the corresponding limit is equivalent to 
	\[\colim_{\theta_M(p)} \Map_{\Mod_{\Ok}^{\mathrm{pro \textrm{-} nil}}}(M, N) \in \cS,\]
itself, via the natural map displayed in \eqref{eq:com_limit_with_colimit_given_by_mult_by_p}, as desired.
\end{proof}

\begin{proof}[Proof of \cref{prop:mat of coh k-linear}] We have a natural equivalence $\theta \colon \Mat^{\mathrm{cat}}_{\kc}(\mathbf{Coh}^+(A)) \simeq \Coh(A)$, as in \cref{lem: pro-p nilpotent enrichement of perf}. We have a further natural functor $\mathbf{Coh}^+(A) \to \mathbf{Coh}^+(A \otimes_{\kc} k)$ induced by the the natural inclusion
	\[
		\Mod_{\Ok}^{\mathrm{pro} \textrm{-} \mathrm{nil}} \to \ind(\Mod_{\Ok}^{\pro \textrm{-} \mathrm{nil}}).
	\]
By applying the $k$-linear materialization functor we obtain a natural functor of \infcats
	\[
		\Mat^\mathrm{cat}_k(\mathbf{Coh}^+(A))   \to \Mat^\mathrm{cat}_k(\mathbf{Coh}^+(A \otimes_{\kc} k)),
	\]
(notice that $\Mat_k^\mathrm{cat}(\mathbf{Coh}^+(A)) \simeq \Mat_{\kc}^\mathrm{cat}(\mathbf{Coh}^+(A))$, by construction).
Moreover, by construction, the \infcat $\Mat^\mathrm{cat}_k(\mathbf{Coh}^+(A \otimes_{\kc} k))$ is naturally $k$-linear. Thanks to \cite[Theorem 6.3.4.6]{lurie2012higher} we obtain a well defined induced functor
	\[
		\theta \colon \Mat^\mathrm{cat}_k(\mathbf{Coh}^+(A)) \otimes_{\kc}k \to \Mat^\mathrm{cat}_k(\mathbf{Coh}^+(A \otimes_{\kc} k)).
	\]
Thanks to \cite[Corollary 3.8]{Antonio_Porta_Non_archimedean_Hilbert}, we conclude that 
	\[
		\Coh(A) \otimes_{\kc} k \simeq \Coh(A \otimes_{\kc} k).
	\]
For this reason, the analogue of \cref{lem: pro-p nilpotent enrichement of perf} for $\Coh$ implies that we are reduced to prove that $\theta$ is an equivalence of \infcats.  Fully faithfulness of $\theta$ follows from the explicit description of enriched mapping objects
provided in \cref{lem:mapping_spaces_enriched_in_ind_pro_are_computed_by_mult_by_p} together with \cite[Corollary 3.17]{Antonio_Porta_Non_archimedean_Hilbert}. In order to show that $\theta$ is essentially surjective we refer to
\cite[Theorem 3.23]{Antonio_Porta_Non_archimedean_Hilbert} which implies that every $M \in \mathbf{Coh}^+(A \otimes_{\kc} k)$ admits a formal model $\mathfrak M \in \mathbf{Coh}^+(A)$, implying essential surjectivity of $\theta$. The result is now proved.
\end{proof}

\begin{remark}
	Let $A \in \adCAlg$. It was proven in \cite[Corollary 3.7]{Antonio_Porta_Non_archimedean_Hilbert} that $\Coh(A \otimes_{\kc} k)$ is a \emph{Verdier localization}, of stable \infcats, of $\Coh(A)$ under the base change functor along $A \to A \otimes_{\kc} k$.
	It turns out that the analogous statement holds for $\Perf(A)$ and $\Perf(A \otimes_{\kc} k)$, up to take idempotent completion, see \cite[Theorem 5.1]{thomason1990higher} for a proof in the discrete setting which also applies in the case of derived coefficients. 
\end{remark}

The previous remark motivates the following definition:

\begin{definition} \label{mod}
Let $A \in \adCAlg$ be a derived $\Ok$-algebra. Denote by $\mathbf{Perf}(A \otimes_{\Ok} k)$ the
$\ind(\Mod_{\Ok}^{\mathrm{pro \textrm{-} nil}})$-enriched subcategory of $\mathbf{Coh}^+(A \otimes_{\Ok} k)$ spanned by those $M \in \mathbf{Coh}^+(A \otimes_{\Ok} k)$ such that its image under the materialization $\Mat_k^\mathrm{cat}$
is a dualizable object of the \infcat $\Coh(A \otimes_{\Ok} k)$.
\end{definition}


\begin{corollary} \label{rmk:mat_of_Perf_enriched_is_perf} Let $A \in \adCAlg$. Then one has a natural equivalence of stable \infcats
	\[
		\Mat_k^\mathrm{cat}( \mathbf{Perf}(A \otimes_{\kc} k)) \simeq \Perf(A \otimes_{\kc} k).
	\]
\end{corollary}

\begin{proof}
	Thanks to \cref{prop:mat of coh k-linear}, it follows that $\Mat_k^\mathrm{cat}(\mathbf{Perf}(A \otimes_{\kc} k)$ is a full subcategory of $\Coh(A \otimes_{\kc} k)$. By our definition of $\mathbf{Perf}_k(A \otimes_{\kc} k)$ it follows that such full subcategory
	is necessarily contained in $\Perf(A \otimes_{\kc} k)$ and it spans the latter. The result now follows, as desired.
\end{proof}

\begin{warning}
The enriched mapping objects in $\mathbf{Perf}(A \otimes_{\Ok} k)$ depends on the choice of a
lifting, i.e. if $A' \ \in \adCAlg$ is such that $A' \otimes_{\kc} k \simeq A \otimes_{\kc} k$, as derived rings, one could have, a priori, that 
	\[
		\mathbf{Perf}_k(A \otimes_{\kc} k) \quad \textrm{and} \quad  \mathbf{Perf}_k(A ' \otimes_{\kc} k)
	\]
are non-equivalent. However, one is able to prove that whenever $A$ is \emph{$n$-truncated} the above does not depend on the choice of a formal model, see \cref{non_dep}.
\end{warning}

\begin{definition}
	We denote by $\big( \adCAlg \big)^{< \infty}$ the full subcategory of $\adCAlg$ spanned by those $A \in \adCAlg$, for which there exists $m \ge 0$ sufficiently large such that
		\[
			\pi_n(A) \simeq 0,
		\]
	for every $n > m$. We will refer to objects of $\big( \adCAlg \big)^{< \infty}$ as \emph{truncated derived $\Ok$-adic algebras.}
\end{definition}

Our current goal is to prove that the functor
$\mathbf{Perf}_k \colon \adCAlg \to \Cat \big(\ind(\Mod_{\Ok}^{\mathrm{pro \textrm{-} nil}}) \big)$ does not depend on the choice of
formal model, as long as we restrict ourselves to truncated derived $\Ok$-adic algebras. We will need a few preliminary lemmas before proving this result:

\begin{lemma} \label{lem:null_obj_Ind}
	Let $\cD$ be an idempotent complete stable \infcat and let $G \in \ind(\cD)$. Express $G$ as a colimit of the form
		\[
				G \simeq \colim_{\alpha \in I} G_\alpha , \quad \textrm{with } G_\alpha \in \cD,
		\]
	indexed by a filtered \infcat, $I$. Then 
		\[
			G \simeq 0
		\]
	if and only if for every object $F \in \cD$, every index $\alpha \in I$ and every morphism $f_\alpha \colon F \to G_\alpha$ there exists an index $\beta \coloneqq \beta(F, \alpha) > \alpha$ in $I$, such that
	the composite
		\[
			F \to G_\alpha \to G_\beta,
		\]	
	is null-homotopic.
\end{lemma}

\begin{proof}
	The direct implication follows from the fact that the \infcat $\ind(\cD)$ is compactly generated by objects of $\cD$. Let us prove the converse. Assume that $G \simeq 0$ in the \infcat $\ind(\cD)$. Write $G = \colim_{\alpha \in I} G_\alpha$, where $I$ is a filtered \infcat and,
	for each $\alpha$, $G_\alpha \in \cD$. Let $F \in \cD$ be an object. 
	Then we have a chain of equivalences
		\begin{align*}
			\colim_\alpha \Map_{\cD}(F, G_\alpha) & \simeq 	\Map_{\ind(\cD)}(F, G) \\
												  & \simeq 0.
		\end{align*}
	We conclude that there exists a certain index $\alpha$ for which the structural morphism
		\[
			G_\alpha \to G,
		\]
	factors the zero morphism $F \to G$,
		\[
			F \to G_\alpha.
		\]	
	Moreover, the latter becomes null-homotopic in the colimit. We thus conclude that there exists $\beta > \alpha$ in $I$ such that the composite
		\[
			F \to G_\alpha \to G_\beta	
		\]
	is null-homotopic, as desired.
\end{proof}

\begin{lemma} \label{lem:base_change_of_mapping_spaces}
Let $A \in \adCAlg$. For each $n \ge 1$, and $M , \ N \in \Mod_A$ then the natural morphism of $A$-modules
	\[
		\theta_{M, N} \colon \underline{\Map}_{\Mod_A}(M, N) \otimes_A A_n \to \underline{\Map}_{\Mod_{A_n}}(M_n, N_n),
	\]
is an equivalence.
\end{lemma}

\begin{proof}
	The morphism $\theta_{M, N}$ is uniquely determined by the universal property of the natural morphism $A \to A_n$. Consider the fiber sequence
		\begin{equation} \label{eq:fiber_sequence_of_A_p^n_A_A_n}
			A_n[-1] \to A \xrightarrow{p^n} A ,
		\end{equation}
	in the \infcat $\Mod_{A_n}$. By tensoring \eqref{eq:fiber_sequence_of_A_p^n_A_A_n} with the $A$-module $\underline{\Map}_{\Mod_A}(M, N)$ we obtain a fiber sequence of the form
		\[
			 \underline{\Map}_{\Mod_A}(M, N) \otimes_A A_n[-1] \to  \underline{\Map}_{\Mod_A}(M, N) \xrightarrow{\theta_p} \underline{\Map}_{\Mod_A}(M, N),
		\]
	in $\Mod_A$. Moreover, the above fiber sequence is naturally equivalent to 
		\[
			\underline{\Map}_{\Mod_A}(M, N \otimes_A A_n) [-1] \to  \underline{\Map}_{\Mod_A}(M, N) \xrightarrow{\theta_p} \underline{\Map}_{\Mod_A}(M, N).
		\]
	By the universal property of base change along the morphism $A \to A_n$ we have a natural equivalence
		\[
		\underline{\Map}_{\Mod_A}(M, N_n) \simeq \underline{\Map}_{\Mod_{A_n}}(M_n, N_n).
		\]
	The conclusion now follows by shifting the fiber sequence
		\[
			\underline{\Map}_{\Mod_{A_n}}(M_n, N_n) [-1] \to  \underline{\Map}_{\Mod_A}(M, N) \xrightarrow{\theta_p} \underline{\Map}_{\Mod_A}(M, N).
		\]
\end{proof}

We are now able to prove the independency of $\mathbf{Perf}_k$ on truncated formal models:

\begin{proposition} \label{non_dep}
Let $A , \ A' \in \big( \adCAlg \big)^{< \infty}$ be truncated derived $\Ok$-adic algebras. Suppose that there exists
an equivalence
	\[
		A \otimes_{\Ok} k \simeq A' \otimes_{\Ok} k ,
	\]
in the \infcat $\CAlg_k$. Then we have an equivalence
	\[
		\mathbf{Perf}_k(A \otimes_{\Ok} k) \simeq \mathbf{Perf}_k(A' \otimes_{\Ok} k),	
	\]
in the \infcat $\Cat \big( \ind(\Mod_{\Ok}^{\pro \textrm{-} \mathrm{nil}}) \big)$.
\end{proposition}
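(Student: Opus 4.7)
The plan is to show that, when $A$ is truncated, the enriched \infcat $\underline{\Perf}(A)$ depends only on the generic fibre $A \otimes_{\Ok} k$ in a canonical way. Fix an equivalence $\phi \colon A \otimes_{\Ok} k \simeq A' \otimes_{\Ok} k$ in $\CAlg_k$ and write $B$ for this common $k$-algebra. The strategy is to build a functor of $\Sp_{\mathrm{pro}}(p)_{p^{-1}}$-enriched \infcats
	\[
		\Phi \colon \underline{\Perf}(A) \to \underline{\Perf}(A')
	\]
by lifting $\phi$ through formal models on both sides, and then verify that $\Phi$ is essentially surjective and fully faithful at the level of enriched mapping objects.

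First, I would analyse the underlying spaces. Applying the materialization functor $\Mat_\mathrm{cat} \colon \Cat(\Sp_{\mathrm{pro}}(p)_{p^{-1}}) \to \Cat$, one obtains $\Mat_\mathrm{cat}(\underline{\Perf}(A)) \simeq \Perf(B) \simeq \Mat_\mathrm{cat}(\underline{\Perf}(A'))$, so $\phi$ immediately produces an equivalence of the underlying \infcats. The substantive content lies in the enriched mapping objects. Given objects $\overline{M}, \overline{N} \in \Perf(B)$, the formal model existence result \cite[Proposition A.1.5]{antonio2018p} produces $M, N \in \Coh(A)$ and $M', N' \in \Coh(A')$ lifting them via $\phi$; by the Warning preceding this Proposition, for truncated $A$ (resp.\ $A'$) this choice is canonical up to a contractible space of choices, so we can choose such lifts functorially in $(\overline{M},\overline{N})$.

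The main step is to produce, for each such pair of lifts, a canonical equivalence
	\[
		\rmL_p \Map_{\Coh(A)}(M,N) \simeq \rmL_p \Map_{\Coh(A')}(M',N')
	\]
in $\Sp_{\mathrm{pro}}(p)_{p^{-1}}$. Unpacking the definitions, both sides are obtained by taking the pro-spectrum $\{\Map_{\Coh(A_n)}(M_n, N_n)\}_n$ (respectively for $A'$) and inverting $p$ via the filtered colimit along multiplication by $p$. After inverting $p$ the pro-structure records the natural topology on $\Map_B(\overline M, \overline N)$ induced by the ind-pro structure of $B$, and two formal models of $B$ induce equivalent such topologies; concretely, one checks that the pro-systems $\{A_n\}_n$ and $\{A'_n\}_n$ become cofinally equivalent once $p$ is inverted, using that truncatedness bounds the $p$-torsion in $\pi_* A$ and $\pi_* A'$. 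This gives the required equivalence of mapping objects, and tensorial functoriality in $(M,N)$ yields compatibility with composition.

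To promote these pointwise equivalences to an equivalence of enriched \infcats I would use \cref{lim:tech} together with the description of $\rmL_p(\Coh(-))$ as an $\rmL_{\mathrm{gen}} \mathbf\Delta^{\op}_X$-algebra in $\Sp_{\mathrm{pro}}(p)_{p^{-1}}^\otimes$. The contractibility of the space of formal models in the truncated case feeds into this simplicial algebra construction to produce a canonical equivalence of such algebras attached to $A$ and $A'$, hence the desired equivalence after passing to the complete enriched \infcats. The hardest part is this last coherence step: ensuring that the choices of formal models of $M$, $N$, and of composable chains $M \to N \to L$ assemble compatibly so that the comparison respects the whole enriched simplicial structure, rather than merely the $1$-morphism data. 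This is where the truncation hypothesis is essential, since it guarantees that the relevant spaces of formal models are contractible and therefore the coherence data can be produced uniquely up to contractible indeterminacy.
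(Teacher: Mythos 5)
There is a genuine gap. The paper's proof begins with the crucial reduction: by the derived Raynaud localization theorem \cite[Theorem 4.4.10]{antonio2018p}, one may assume without loss of generality that there is an actual morphism $f \colon A \to A'$ in $(\adCAlg)^{<\infty}$ whose rigidification is an equivalence. This immediately produces a canonical $\Sp_{\mathrm{pro}}(p)_{p^{-1}}$-enriched functor $\underline{\Perf}(A) \to \underline{\Perf}(A')$, and the remaining work is to show it is fully faithful on enriched mapping objects. Your plan skips this reduction entirely and instead tries to build a functor $\Phi$ directly from the equivalence $\phi$ of generic fibres by lifting objects independently on the two sides. The problem is that without a morphism relating $A$ and $A'$, there is no direct way to compare the pro-systems $\{\Map_{\Coh(A_n)}(M_n, N_n)\}_n$ and $\{\Map_{\Coh(A'_n)}(M'_n, N'_n)\}_n$: they live over different bases, so the claimed ``cofinal equivalence once $p$ is inverted'' has no content as stated -- there is no map in either direction to analyse. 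In effect, you are implicitly invoking (a version of) the result you are trying to prove, or invoking Raynaud without naming it.

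Relatedly, the coherence problem you flag in the last paragraph -- choosing formal models of $M$, $N$, and composable chains compatibly so that the comparison respects the enriched simplicial structure -- is exactly the difficulty the Raynaud reduction is designed to avoid: once a morphism $f \colon A \to A'$ is in hand, the enriched functor is simply base change, and coherence is automatic. Your proposed reliance on the contractibility of spaces of formal models (the Warning before the proposition) would, if made precise, amount to redoing the Raynaud argument for simplicial diagrams; that is much harder than what the paper does, and you do not carry it out. Finally, the quantitative heart of the paper's argument -- that truncatedness of $A$ and $A'$ forces the cofiber $\cofib(\theta')$ of the map of pro-mapping-spectra to be killed by a \emph{single} power $p^k$ independent of the level $n$, so that after inverting $p$ the cofiber vanishes in $\ind(\Sp_{\mathrm{pro}}(p))$ -- appears only as the vague phrase ``truncatedness bounds the $p$-torsion.'' You should first invoke \cite[Theorem 4.4.10]{antonio2018p} to reduce to a morphism $A \to A'$, then carry out the uniform $p$-torsion bound on the levelwise cofibers, and then pass to the ind-localization to conclude.
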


\begin{proof}
Let $A , \ A' \in \adCAlg$ as in the statement of the proposition.
Thanks to \cite[Corollary 4.4.11]{antonio2018p}, we can suppose that there exists a morphism
$f \colon A \to A'$ in the \infcat $(\adCAlg )^{ < \infty}$ such that the rigidification of 
	\[
		\Spf (f) \colon \Spf(A') \to \Spf(A),
	\]
is an equivalence in the \infcat $\dAfd_k$. In particular, $f$ becomes an equivalence after base change along the natural morphism $\Ok \to k$. Thanks to \cref{prop:ff_es_equivalences_enriched} we are reduced to show that the canonical functor
	\begin{equation} \label{func:enr}
		(f \otimes_{\Ok} k)^* \colon \mathbf{Perf}_k(A \otimes_{\Ok} k ) \to \mathbf{Perf}_k(A' \otimes_{\Ok} k)	,
	\end{equation}
is both essentially surjective and fully faithful. We first prove that $(f \otimes_{\kc} k)^*$ is essentially surjective. Thanks to \cite[Lemma 5.3.4]{gepner2015enriched} we are reduced to verify that $(f \otimes_{\kc} k)^*$ is essentially surjective after applying the materialization functor, introduced in
\eqref{eq:mat_functor_ind_mod}. Thanks to \cref{rmk:mat_of_Perf_enriched_is_perf} we are reduced to show that the usual base change functor
	\[
		(f \otimes_{\Ok} k)^* \colon \Perf(A \otimes_{\Ok} k) \to \Perf(A' \otimes_{\Ok} k),
	\]
is essentially surjective. Since, by assumption, $(f \otimes_{\Ok} k) \colon A \otimes_{\Ok} k \to A' \otimes_{\Ok} k$ is an equivalence in $\CAlg_k$, it is clear that base change along $f \otimes_{\Ok} k$, on perfect modules, is an equivalence. In particular, it is essentially
surjective.

We are thus reduced to show that the functor displayed in \eqref{func:enr} is fully faithful. Let $M, \ N \in \mathbf{Perf}_k(A \otimes_{\Ok} k)$ be two objects. By construction, these correspond to 
	\[
		M \simeq \colim_{\textrm{mult by }p } \mathfrak M , \quad N \simeq \colim_{\textrm{mult by } p} \mathfrak N,
	\]
for suitable $\mathfrak M, \ \mathfrak N \in \mathbf{Coh}^+(A)$. Since $A$ is truncated, both $M$ and $N$ are truncated $A \otimes_{\kc} k$-modules. For this reason, up to taking truncations, we can assume that $\mathfrak M$ and $\mathfrak N$ are truncated
almost perfect $A$-modules.
We need to show that the functor displayed in \eqref{func:enr} induces an equivalence
	\begin{equation}
			\theta \colon
			\colim_{\theta(p)} \big( \underset{n \geq 1}{\lim} \big( \underline{ \Map}_{\Coh(A_n)} \big( \mathfrak M_n  , \mathfrak N_n \big) \big) \big)
			\to \colim_{\theta(p)} \big( \underset{n \geq 1}{\lim} \big( \underline{ \Map}_{\Coh(A'_n)} \big( \mathfrak M'_n  , \mathfrak N'_n \big)  \big)
			\big)
	\end{equation}
in the \infcat $\ind(\Mod_{\Ok}^{\mathrm{pro \textrm{-} nil}})$. We have denoted 
	\begin{align*}
		\mathfrak M_n \coloneqq \mathfrak M \otimes_A A_n, & \textrm{ and } \mathfrak N_n \coloneqq \mathfrak N \otimes_A A_n, \\
		\mathfrak M' \coloneqq (\mathfrak M \otimes_A A') \otimes_{A'} A'_n, & \textrm{ and } \mathfrak N' \coloneqq ( \mathfrak N \otimes_A A') \otimes_{A'} A'_n.
	\end{align*}
The \infcat $\Mod_{\Ok}^{\mathrm{pro \textrm{-} nil}}$ is a stable \infcat.
It thus suffices to prove that the morphism
	\[
		\theta ' \colon \underset{n \geq 1}{\lim} \big( \underline{ \Map}_{\Coh(A_n)} \big( \mathfrak M_n  , \mathfrak N_n \big) \big) \big)
			\to \underset{n \geq 1}{\lim} \big( \underline{\Map}_{\Coh(A'_n)} \big( \mathfrak M'_n  , \mathfrak N'_n \big)  \big),
	\]
in $\Mod_{\Ok}^{\mathrm{pro} \textrm{-} \mathrm{nil}}$, has cofiber annihilated after multiplication by a sufficiently large power of $p$.
We further observe that we have an equivalence of pro-objects
	\begin{equation} \label{eq:cofib_theta'_char_as_pro_object}
		\cofib(\theta') \simeq \underset{n \geq 1}{\lim} \cofib(\theta_n'),
	\end{equation} 
in $\Mod_{\Ok}^{\mathrm{pro} \textrm{-} \mathrm{nil}}$,
where $\theta'_n$ denotes the canonical morphism
	\begin{equation} \label{limsp}
		\theta'_n \colon \underline{\Map}_{\Coh(A_n)} \big( \mathfrak M_n  , \mathfrak N_n \big) \to 
		\underline{\Map}_{\Coh(A'_n) } \big( \mathfrak M'_n, \mathfrak N'_n \big),
	\end{equation}
in $\Mod_{\Ok}^{\mathrm{nil}}$. Indeed cofiltered limits commute with fibers in \infcats of pro-objects. Since $(f \otimes_{\kc} k)$ is an equivalence in $\CAlg_k$, it follows from \cite[Corollary 3.17]{Antonio_Porta_Non_archimedean_Hilbert} that
	\begin{equation}
		\colim \big( \Mat_{\kc}(\cofib(\theta')) \xrightarrow{p} \Mat_{\kc}(\cofib(\theta'))  \to \dots) \simeq 0,
	\end{equation}	
in $\Mod_{A}$. Since both $\mathfrak M$, $\mathfrak N$ are truncated coherent $A$-modules, it follows from \cref{lem:null_obj_Ind} that there exists a sufficiently large $n \ge 1$ such that
	\[
		p^n \cdot \pi_m(\Mat_{\kc}( \cofib(\theta') )) \simeq 0 ,
	\]
for every $m \in \bZ$.
Consequently, the object \[(p^n) \otimes_{\kc} \Mat(\cofib(\theta' ) )  \simeq 0.\]
By applying \cref{lem:base_change_of_mapping_spaces}, it follows for every $m \ge 1$, that
	\[
		(p^n) \otimes_{\kc}  \cofib(\theta'_m) \simeq 0,
	\]
in $\Mod_A$.
Therefore, once again by \cref{lem:null_obj_Ind} it follows that the ind-system
	\[
		\colim \big( \{ \cofib(\theta'_m)  \}_m \xrightarrow{p}  \{ \cofib(\theta'_m)  \}_m \xrightarrow{p} \dots \big)
	\]	
is equivalent to the zero object, in $\ind(\Mod_{\kc}^{\mathrm{pro} \textrm{-} \mathrm{nil}}).$ The assertion now follows from \eqref{eq:cofib_theta'_char_as_pro_object}.
\end{proof}

\begin{corollary} \label{invariance}
Let $A \in \big( \adCAlg \big)^{< \infty}$. Suppose we are given bounded below $M, \ N \in \mathbf{Coh}^+(A)$
such that
	\[
		M \otimes_{\Ok} k \in \mathbf{Perf}(A \otimes_{\Ok} k ) , \quad N \otimes_{\Ok} k \in \mathbf{Perf}(A \otimes_{\Ok } k ).
	\]
Then the
mapping object
	\[
		\mathbf{Map}_{\mathbf{Perf}(A)} \big( M \otimes_{\Ok} k , N \otimes_{\Ok} k \big) \in  \ind(\Mod_{\Ok}^{\mathrm{pro \textrm{-} nil}})
	\]
does not depend on the choice of formal models $M$ and $N  \in \mathbf{Coh}^+(A)$.
\end{corollary}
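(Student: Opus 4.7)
The strategy is to mimic the argument used in the proof of \cref{non_dep}, replacing the rigidifying morphism of derived $\Ok$-adic algebras with a rigidifying morphism of coherent modules. Fix two pairs of formal models $M_1, M_2$ of $\overline{M} \coloneqq M \otimes_{\Ok} k$ and $N_1, N_2$ of $\overline{N} \coloneqq N \otimes_{\Ok} k$ in $\Coh(A)$. Using the formal-model theory from \cite{antonio2018p}, one can produce (up to a zig-zag through a common lift) morphisms $f \colon M_1 \to M_2$ and $g \colon N_1 \to N_2$ in $\Coh(A)$ whose rigidifications $f \otimes_{\Ok} k$ and $g \otimes_{\Ok} k$ are equivalences in $\Perf(A \otimes_{\Ok} k)$. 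In particular, both $\cofib(f)$ and $\cofib(g)$ are $p$-power torsion objects of $\Coh(A)$.

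For each $n \geq 1$, the morphism $f$ induces a canonical map
\[
\theta'_n \colon \Map_{\Perf(A_n)}\bigl(M_2 \otimes_A A_n,\, N_1 \otimes_A A_n\bigr) \to \Map_{\Perf(A_n)}\bigl(M_1 \otimes_A A_n,\, N_1 \otimes_A A_n\bigr)
\]
whose cofiber is computed by mapping out of $\cofib(f) \otimes_A A_n$. Since $A$ is truncated and $M_1, M_2, N_1$ are almost perfect, the homotopy groups of each such mapping spectrum live in a finite range of degrees that is independent of $n$, and their finitely-generated $\pi_0(A)$-module structure together with the $p$-torsion of $\cofib(f)$ forces $\cofib(\theta'_n)$ to be annihilated by a single uniform power $p^k$ for all $n$. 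The same argument applies to $g$ in the target variable.

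The endgame now proceeds exactly as in the last paragraph of the proof of \cref{non_dep}: the uniform $p^k$-torsion of $\cofib(\theta'_n)$ propagates through the cofiltered limit $\lim_n \cofib(\theta'_n) \in \Sp_{\mathrm{pro}}(p)$, since for any compact $Z \in \ind(\Sp_{\mathrm{pro}}(p))$ the space $\Map_{\Sp_{\mathrm{pro}}(p)}(Z, \lim_n \cofib(\theta'_n))$ becomes $p$-torsion and therefore trivial after applying $\rmL_p$. Combining the analogous argument for $g$ yields the asserted equivalence of mapping objects in $\Sp_{\mathrm{pro}}(p)_{p^{-1}}$. The main obstacle is securing the uniform torsion bound on $\cofib(f)$ and $\cofib(g)$; this crucially requires the truncatedness of $A$ (to avoid unbounded growth of torsion orders across homotopical degrees) together with the almost perfection hypothesis on the formal models, precisely as in \cref{non_dep}.
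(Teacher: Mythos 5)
Your proof is correct and is precisely the module-level adaptation of the argument of \cref{non_dep} that the paper's one-line citation asks the reader to supply: replace the generic equivalence of $\Ok$-adic algebras by a (zig-zag of) generic equivalence(s) of formal models, observe that the levelwise cofibers are uniformly $p$-power torsion because $A$ is truncated and the formal models are almost perfect, and run the same endgame in $\Sp_{\mathrm{pro}}(p)_{p^{-1}}$. Since the paper's own proof is just the citation, your write-up matches the intended approach while making explicit the adaptation from morphisms of algebras to morphisms of modules.
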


\begin{proof}
It suffices to prove the assertion separately in the variable $M$ and $N$. Suppose we are given $M' \in \mathbf{Coh}^+(A)$ such that 
	\[
		M' \otimes_{\kc} k \simeq M \otimes_{\kc} k,
	\]
Thanks to \cite[Proposition A.2.1]{antonio2018p} we can reduce to the case where there exists a morphism $M \to M' $ in $\mathbf{Coh}^+(A)$ such that
	$
		(f \otimes_{\kc} k)
	$
is an equivalence in the \infcat $\mathbf{Perf}_k(A \otimes_{\kc} k)$. In this case, the reasoning used to deal with fully faithfulness of $(f \otimes_{\kc} k)^*$, in the proof of \cref{non_dep}, can be adapted to the current context. To prove independency on the choice of the formal model $N
\in \mathbf{Coh}^+(A)$, we apply the same argument.
\end{proof}

\begin{remark}
The association $A \in \adCAlg \mapsto \mathbf{Perf}(A \otimes_{\Ok} k) \in \Cat \big( \ind(\Mod_{\Ok}^{\mathrm{pro} \textrm{-} \mathrm{nil}})
\big)$ is functorial.
Therefore, the usual functor
	\[
		\Perf \colon \adCAlg \to \Cat^\st
	\]
can be upgraded naturally to a functor $\mathbf{Perf}_k \colon \adCAlg \to  \Cat ( \ind(\Mod_{\Ok}^{\mathrm{pro \textrm{-} nil}}))$.
\end{remark}

We now study some properties of the functor $\mathbf{Perf}_k \colon \adCAlg \to \Cat$:

\begin{proposition} \label{inf:cart}
The functor $\mathbf{Perf}_k \colon \adCAlg \to \Cat$ is infinitesimally cartesian.
\end{proposition}

\begin{proof}
Let $ A \in \adCAlg$. Consider an $\Ok$-adic derivation
	\[
		d \colon \bL^\ad_A \to M ,
	\]
in the \infcat $\Coh(A)$. By the universal property of the adic cotangent complex, cf. \cite[Proposition 3.4.3]{antonio2018p}, this corresponds to a morphism $A \to A \oplus M$ in the \infcat $\adCAlg$. Consider the pullback diagram
	\[
	\begin{tikzcd}
		A_d[M] \ar{r}  \ar{d}  & A \ar{d}{d} \\
		A \ar{r}{d_0} & A \oplus M
	\end{tikzcd}
	\]
in $\adCAlg$. Thanks to \cref{prop:ff_es_equivalences_enriched} we are required to prove that
the induced functor
	\begin{equation} \label{funfun}
		F_{A, d} \colon \mathbf{Perf}_k \big( A_d[M] \otimes_{\Ok} k \big)  \to \mathbf{Perf}_k (A \otimes_{\Ok} k) \times_{	\mathbf{Perf}_k((A \oplus M ) 	\otimes_{\Ok} k )	}	\mathbf{Perf}_k(A \otimes_{\Ok} k),
	\end{equation}
is both essentially surjective and fully faithful. 
If follows by \cite[Lemma 5.3.4 combined with Lemma 5.1.2]{gepner2015enriched}, that we can check essential surjectiveness after applying the categorical $k$-linear materialization
functor 
	\[
		\Mat_k^{\mathrm{cat}}\colon \Cat  \big( \ind(\Mod_{\Ok}^{\mathrm{pro} \textrm{-} \mathrm{nil}}) \big) \to \Cat,
	\]
defined in \eqref{eq:mat_functor_ind_mod}.
Furthermore, after applying $\Mat_{\mathrm{cat}}$ the functor displayed in \eqref{funfun} is equivalent to the canonical functor
	\[
		F_{A, d} \colon \Perf(A_d[M] \otimes_{\Ok} k ) \to \Perf(A \otimes_{\Ok} k) \times_{	\Perf((A \oplus M) \otimes_{\Ok} k )	} \Perf(A \otimes_{\Ok} k ).
	\]
The essential surjectivity of the latter follows by \cite[Proposition 3.4.10]{lurie2012dag}. We are thus reduced to show that the functor $F_{A, d}$ is fully faithful.
Let $N, P \in \mathbf{Perf}_k(A_d[M] \otimes_{\Ok} k)$. Thanks to \cite[Proposition A.3.2]{antonio2018p} there are $\mathfrak N, \mathfrak P \in \mathbf{Coh}^+(A_d[M])$ and equivalences
	\[
		\mathfrak N \otimes_{\Ok} k \simeq N , \quad \mathfrak P \otimes_{\Ok} k \simeq P,
	\]
in $\mathbf{Perf}_k(A_d[M])$. We are thus required to prove that the canonical morphism
	\begin{equation} \label{eq:ff_of_inf_cart}
		\colim_{\theta(p)} \mathbf{Map}_{\mathbf{Coh}^+(A_d[M])}(\mathfrak N , \mathfrak P)   \to  X_0 \times_{X_{0, 1} } X_1
	\end{equation}
is an equivalence in $\ind( \Mod_{\Ok}^{\mathrm{pro} \textrm{-} \mathrm{nil}})$.
Here we have denoted
	\begin{align*}
		 X_0  & \coloneqq \colim_{\theta(p)} \big( \mathbf{Map}_{\mathbf{Coh}^+(A)}(\mathfrak N \otimes_{A_d[M]} A, \mathfrak P \otimes_{A_d[M]} A) \big)  \\
		X_{0, 1} & \coloneqq	\colim_{\theta(p)} \big( \mathbf{Map}_{\mathbf{Coh}^+(A \oplus M)}(\mathfrak N  \otimes_{A_d[M]} (A \oplus M), \mathfrak P \otimes_{A_d[M]} (A \oplus M) \big)  \\
		 X_1 & \coloneqq  \colim_{\theta(p)} \big( \mathbf{Map}_{\mathbf{Coh}^+(A)}(\mathfrak N  \otimes_{A_d[M]} A, \mathfrak P \otimes_{A_d[M]} A)  \big).
	\end{align*}
We now observe that the fact that filtered colimits commute with finite limits in $\ind$-complete \infcats combined with \cref{lim:tech} imply that the morphism displayed in \eqref{eq:ff_of_inf_cart} is an equivalence, as desired.
\end{proof}

We now proceed to define an $\ind(\pro(\cS))$-enrichment on $\Perf(A \otimes_{\kc} k)$. This is achieved as follows: 

\begin{lemma} \label{lem:omega_infty_is_lax_sym_mon}
The natural inclusion functor
	\[
		i_\mathrm{nil} \colon \Mod_{\kc}^{\mathrm{nil}} \to \Mod_{\kc},
	\]
is lax symmetric monoidal.
\end{lemma}

\begin{proof} Let $\kc^\mathrm{nil} \in \Mod_{\kc}^{\mathrm{nil}}$ denote the unit for the corresponding symmetric monoidal structure. We have a natural morphism $\kc^\mathrm{nil} \to \kc$, by construction. Moreover, given any $M, N \in \Mod^{\mathrm{nil}}$ we 
have natural maps
	\begin{align*}
		M \otimes_{\kc^\mathrm{nil}} N & \simeq  M \otimes_{\kc^\mathrm{nil}} (\kc^\mathrm{nil} \otimes_{\kc} N ) \\
								& \simeq (M \otimes_{\kc^\mathrm{nil}} \kc^\mathrm{nil}) \otimes_{\kc} N  \\
								& \simeq M \otimes_{\kc} N,
	\end{align*}
	see \cite[Definition 8.2.2.1 combined with Proposition 8.2.2.5]{lurie2016spectral} for a justification of the first equivalence. The result now follows.
\end{proof}

\begin{construction} \label{const:Omega_cat^infty_on_ind_pro}
Consider the lax symmetric monoidal functor
	\[
		\Omega_{\kc}^\infty \colon \Mod_{\kc}^\otimes \to \cS^\times,
	\]
obtained by precomposing the usual forgetful $\Mod_{\kc} \to \Sp$ with the lax symmetric monoidal functor
	\[
		\Omega^\infty \colon \Sp^\otimes \to \cS^\times.
	\]
By \cref{lem:omega_infty_is_lax_sym_mon}, we have a lax symmetric monoidal inclusion functor
	\[
	\Mod_{\kc}^{\mathrm{nil}} \subset \Mod_{\kc}.
	\] 
Thus, by composition, we obtain a natural lax symmetric monoidal functor
	\[
		\Omega^{\infty, \mathrm{nil}}_{\kc} \colon \Mod_{\kc}^{\mathrm{nil}, \otimes} \to \cS^\times.
	\]
By passing to ind-pro completions we obtain a canonically defined lax symmetric monoidal functor
	\begin{equation} \label{eq:omega_infty_ind}
		\Omega^{\infty, \ind}_{\kc} \colon \ind(\Mod_{\Ok}^{\mathrm{pro \textrm{-} nil}})^\otimes \to
		\ind \big( \pro( \cS ) \big)^\times.
	\end{equation}
\end{construction}

\begin{lemma} \label{lem:cat materialization functor on ind pro spaces}
The functor $\Omega^{\infty, \ind}_{\kc} \colon \ind(\Mod_{\Ok}^{\mathrm{pro \textrm{-} nil}})^\otimes \to
		\ind \big( \pro( \cS ) \big)^\times$ induces a well defined functor
	\[
		\Omega^\infty_{\mathrm{cat}} \colon \Cat( \ind(\Mod_{\Ok}^{\mathrm{pro \textrm{-} nil}}) ) \to \Cat(\ind(\pro(\cS))).
	\]	
\end{lemma}

\begin{proof}
	It follows from \cref{const:Omega_cat^infty_on_ind_pro} combined with \cite[Corollary 5.7.6]{gepner2015enriched}.
\end{proof}

\begin{notation}
 Let $A \in \adCAlg$. By abuse of notation we will denote $\Omega^\infty_{\mathrm{cat}} (\mathbf{Perf}_k(A \otimes_{\kc} k))$ simply by $\mathbf{Perf}_k(A \otimes_{\kc} k) \in \Cat(\ind(\pro(\cS)))$.
\end{notation}

\begin{definition}
	Let $f \colon A \to B$ be a morphism in $\adCAlg$. We say that $f$ is a \emph{rig-equivalence} if $f \otimes_{\Ok} k$ is an equivalence in the \infcat $\CAlg_k$.
\end{definition}

\begin{corollary}
The functor $\mathbf{Perf}_k \colon \adCAlg \to \Cat(\ind(\pro(\cS)))$ is infinitesimally cartesian. Moreover, its restriction to the full subcategory $( \adCAlg )^{< \infty} \subseteq \adCAlg$ is
invariant under rig-equivalences.
\end{corollary}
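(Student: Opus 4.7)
The plan is to deduce both assertions from their already established counterparts for the $\Sp_{\mathrm{pro}}(p)_{p^{-1}}$-enriched version of $\underline{\Perf}$, namely \cref{inf:cart} and \cref{non_dep}, by transporting along the functor
\[
    \Omega^\infty_{\mathrm{pro}}(p)_{p^{-1}} \colon \Cat \big( \Sp_{\mathrm{pro}}(p)_{p^{-1}} \big) \to \cE \Cat
\]
introduced in the previous construction. The key observation is that this functor, being induced from the lax symmetric monoidal connective cover $\Omega^\infty$ (which is a right adjoint at the level of underlying \infcats), is compatible with small limits in a strong sense: given a small diagram $F \colon I \to \Cat \big( \Sp_{\mathrm{pro}}(p)_{p^{-1}} \big)$ whose limit exists, the induced diagram $\Omega^\infty_{\mathrm{pro}}(p)_{p^{-1}} \circ F$ admits the corresponding limit in $\cE \Cat$ and $\Omega^\infty_{\mathrm{pro}}(p)_{p^{-1}}$ commutes with it. This follows from \cref{lim:tech} applied to both sides, combined with the fact that mapping objects in a limit of enriched \infcats are themselves limits of mapping objects, and $\Omega^\infty$ preserves such limits in the underlying enrichment.

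First I would prove this general statement about $\Omega^\infty_{\mathrm{pro}}(p)_{p^{-1}}$ preserving pullbacks. For a pullback diagram of enriched \infcats, both essential surjectivity of the comparison functor and fully faithfulness on mapping objects can be checked separately; essential surjectivity reduces (as in \cref{inf:cart}) to the analogous statement after applying the materialization functor $\Mat_{\mathrm{cat}} \colon \cE \Cat \to \Cat$, and fully faithfulness follows directly from \cref{lim:tech} together with the preservation of limits by the underlying functor $\Omega^\infty_{\mathrm{pro}}(p) \colon \Sp_{\mathrm{pro}}(p)_{p^{-1}} \to \ind(\pro(\cS))$.

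For assertion (1), let $d \colon \bL^\ad_A \to M$ be an $\Ok$-adic derivation in $\Coh(A)$. By \cref{inf:cart}, the comparison functor
\[
    F_{A, d} \colon \underline{\Perf} \big( A_d[M] \big) \to \underline{\Perf}(A) \times_{\underline{\Perf}(A \oplus M)} \underline{\Perf}(A)
\]
is an equivalence in $\Cat \big( \Sp_{\mathrm{pro}}(p)_{p^{-1}} \big)$. Applying the functor $\Omega^\infty_{\mathrm{pro}}(p)_{p^{-1}}$ and using its compatibility with pullbacks established above yields the required equivalence in $\cE \Cat$. For assertion (2), given $A, A' \in ( \adCAlg )^{< \infty}$ with $A \otimes_{\Ok} k \simeq A' \otimes_{\Ok} k$ in $\CAlg_k$, \cref{non_dep} produces an equivalence $\underline{\Perf}(A) \simeq \underline{\Perf}(A')$ in $\Cat \big( \Sp_{\mathrm{pro}}(p)_{p^{-1}} \big)$, and applying $\Omega^\infty_{\mathrm{pro}}(p)_{p^{-1}}$ (which as any functor preserves equivalences) produces the required equivalence in $\cE \Cat$.

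The main obstacle is the careful verification that $\Omega^\infty_{\mathrm{pro}}(p)_{p^{-1}}$ preserves pullbacks of enriched \infcats. Unlike the underlying materialization, this functor is only lax symmetric monoidal, so one must argue at the level of mapping objects rather than reducing directly to an assertion about underlying \infcats. Once this is settled via the application of \cref{lim:tech} and the limit-preservation of $\Omega^\infty$, the remainder of the proof is formal.
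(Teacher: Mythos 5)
Your approach matches the paper's, which disposes of the corollary in one line as a direct consequence of \cref{inf:cart} and \cref{non_dep}. The one genuine addition you make is to spell out why the equivalences established in $\Cat\big(\Sp_{\mathrm{pro}}(p)_{p^{-1}}\big)$ by those two propositions descend along $\Omega^\infty_{\mathrm{pro}}(p)_{p^{-1}}$ to equivalences in $\cE\Cat$: for the infinitesimal cartesianness this requires $\Omega^\infty_{\mathrm{pro}}(p)_{p^{-1}}$ to carry the pullback of enriched \infcats to the corresponding pullback, and you correctly isolate the two checks (essential surjectivity via $\Mat_{\mathrm{cat}}$, fully faithfulness via \cref{lim:tech} plus the limit-preservation of the underlying lax monoidal right adjoint). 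The paper treats this transport as routine, and it is — since $\Omega^\infty$ is a right adjoint and the pro/ind extensions preserve finite limits, the induced change-of-enrichment functor preserves pullbacks of enriched \infcats — but making it explicit is harmless and arguably clarifying. The invariance under generic equivalences really is trivial once \cref{non_dep} is in hand (functors preserve equivalences), exactly as you say. One minor slip: in the fully faithfulness step you write $\Omega^\infty_{\mathrm{pro}}(p) \colon \Sp_{\mathrm{pro}}(p)_{p^{-1}} \to \ind(\pro(\cS))$; the source should be $\Sp_{\mathrm{pro}}(p)_{p^{-1}}$ so the functor is $\Omega^\infty_{\mathrm{pro}}(p)_{p^{-1}}$, the $p$-inverted version, not $\Omega^\infty_{\mathrm{pro}}(p)$ itself.
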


\begin{proof} The first statement follows by the precise same proof as in \cref{inf:cart}.
The second assertion follows from \cref{non_dep}.
\end{proof}

\begin{notation}
Consider the \infcat $\dAfd_k$ of derived $k$-affinoid spaces introduced in \cite[Definition 7.3]{porta2016derived}. We denote $\dAfd^{< \infty}_k$
the full subcategory spanned by those truncated derived $k$-affinoid spaces.
\end{notation}

\begin{remark}
The rigidification functor $\rigg \colon \adCAlg \to \dAfd^{\op}_k$ introduced in \cite[\S 4]{antonio2018p} induces, by restriction, a well defined functor
	\[
		\rigg \colon  \big( \adCAlg \big)^{< \infty} \to \big( \dAfd_k^{< \infty} \big)^\op.
	\]
Moreover, the derived Raynaud localization theorem \cite[Theorem 4.4.10]{antonio2018p}
and its proof imply that $\dAfd_k^{< \infty}$ is a localization of the \infcat
$( \adCAlg )^{< \infty}$ at the saturated class of \emph{rig-equivalences} morphisms. We characterize those as morphisms $A \to A'$ which become an equivalence after base change along $\kc \to k$. This
assertion follows by
derived Tate aciclicity theorem, cf. \cite[Theorem 3.1]{Porta_Yue_derived_Hom}.
\end{remark}

The functor $\mathbf{Perf}_k \colon (\adCAlg)^{< \infty} \to \Cat(\ind(\pro(\cS)))$ actually descend to the \infcat $(\dAfd)^{< \infty}$:

\begin{proposition} \label{label}
Let $S^{< \infty}$ denote the saturated class of rig-strong morphisms in $(\adCAlg)^{< \infty}
$. Then the functor
	\[
		\mathbf{Perf}_k \colon \adCAlg \to  \Cat(\ind(\pro(\cS)))
	\]
sends morphisms in $S^{< \infty}$ to equivalences of \infcats in $ \Cat(\ind(\pro(\cS)))$. In particular,
one has a canonical induced functor
	\[
		\mathbf{Perf} \colon \big( \dAfd^{< \infty}_k \big)^\op \to  \Cat(\ind(\pro(\cS))).
	\]
\end{proposition}

\begin{proof}
The first part of the statement follows from \cref{invariance}. The second part of the statement
follows from the derived Raynaud localization theorem, \cite[Corollary 4.4.12]{antonio2018p}.
\end{proof}

\section{Moduli of derived continuous $p$-adic representations}

\subsection{Construction of the functor}
Consider the \infcat of ind-pro objects on $\cS$, $\ind(\pro(\cS))$. We equip it with its natural Cartesian symmetric monoidal structure, $\ind(\pro(\cS))^\times$. Consider the \infcat of $\ind(\pro(\cS))^\times$-enriched \infcats, $\Cat(\ind(\pro(\cS))$. 

\begin{construction} \label{const:materialization_functor_ind-pro_to_S}
	We have a natural symmetric monoidal materialization functor
		\[
			\Mat \colon \ind(\pro(\cS))^\times \to \cS^\times,
		\]
	given by the formula
		\[
			X \in \ind(\pro(\cS)) \mapsto \Map_{\ind(\pro(\cS))}(*, X).
		\]
	The fact that $\Mat$ is symmetric monoidal comes from the fact that filtered colimits commute with finite products. By \cite[Corollary 5.7.6]{gepner2015enriched}, we obtain an induced \emph{categorical materialization functor}
		\[
			\Mat^\mathrm{cat} \colon \Cat(\ind(\pro(\cS))) \to \Cat.
		\]
\end{construction}

\begin{proposition} \label{prop:cat enriched admits an infty-2 cat structure}
	The \infcat $\Cat(\ind(\pro(\cS))$ admits a natural structure of an $(\infty, 2)$-category.
\end{proposition}

\begin{proof}
	This is the content of \cite[Example 7.4.11]{gepner2015enriched}. In fact, the latter states a stronger statement, namely that $\Cat(\ind(\pro(\cS))$ admits a \emph{$\ind(\pro(\cS))^\times$-$(\infty,2)$-structure}. Consequently, we obtain a natural $(\infty, 2)$-categorical structure on $\Cat(\ind(\pro(\cS))$ via the materialization functor, introduced
	in \cref{const:materialization_functor_ind-pro_to_S}.
\end{proof}

\begin{definition}
Let $\cC, \ \cD \in \Cat(\ind(\pro(\cS))$. We denote by 
	\begin{align*}
		\cC \mathrm{ont} \Fun \big(\cC, \cD \big) & \coloneqq \underline{\Fun} \big( \cC, \cD) \\
									      & \in \Cat,
	\end{align*}
the \infcat of \emph{continuous functors from $\cC$ to $\cD$}, where $\underline{\Fun}$ denotes the functor \infcat obtained via \cref{prop:cat enriched admits an infty-2 cat structure}
\end{definition}

\begin{notation}
	Let $\cS^\fc$ denote the \infcat of finite spaces, see \cite[Definition 2.4.1]{2009derived}. We denote by $\pro(\cS^\fc)$ the \infcat of \emph{profinite spaces}.
\end{notation}

\begin{definition}	
Let $X \in \pro(\cS^\fc)$ be a profinite space. We say that $X$ is \emph{connected} if
	\[
		\pi_0(\Map_{\pro(\cS^\fc)}(*, X))  \simeq *.
	\]
\end{definition}

\begin{remark} \label{rem:ff embeddings of profinite spaces in ind-pro spaces} 
Let $X \in \pro(\cS^\fc)$ denote a connected profinite space. We can consider the $\bE_1$-monoid like object
		\begin{equation} \label{eq:X seen as a monoid like object}
		\begin{tikzcd}
			 \dots \ar[r, shift left=-1.5ex] \ar[r, shift left=-0.5ex] \ar[r, shift left=0.5ex] \ar[r, shift left=1.5ex] & \Omega(X)^{\times 2} \ar[r, shift left=1ex] \ar{r}  \ar[r, shift left=-1ex] &  \Omega(X) \ar[r, shift left=0.5ex] \ar[r, shift 	
			left=-0.5ex] &* \in \Mon_{\bE_1}(\ind(\pro(\cS))),
		\end{tikzcd}
		\end{equation}
where we apply the loop functor $\Omega \colon \cS \to \cS$ component-wise.
Thanks to \cref{prop:monoids embed ff in enriched infcats} it follows that we can consider naturally $X \in \Cat(\ind(\pro(\cS))$, via the diagram displayed in \eqref{eq:X seen as a monoid like object}.
\end{remark}

\begin{notation}
	We shall abusively use the symbol $X$ to denote the diagram displayed in \eqref{eq:X seen as a monoid like object}, considered naturally as an object in $\Cat(\ind(\pro(\cS)))$.
\end{notation}
\begin{definition}
	Let $X \in \pro(\cS^\fc)$ denote a profinite space. Given any $A \in \adCAlg$, we denote by 
		\begin{align*}
			\bPerfSys(X)(A \otimes_{\kc} k) & \coloneqq \cC\mathrm{ont} \Fun(X, \mathbf{Perf}(A \otimes_{\kc} k)) \\
				& \in \Cat,
		\end{align*}
	the \infcat of \emph{continuous $A \otimes_{\kc} k$-adic representations of $X$}. Similarly, we shall denote by
		\begin{align*}
			\bPerfSys(X)(A) & \coloneqq \cC \mathrm{ontFun}(X, \mathbf{Perf}_{\kc}(A)) \\
						& \in \Cat,
		\end{align*}
	the \infcat of \emph{continuous $A$-adic representations}.
\end{definition}

\begin{remark}
	Notice that the proof of \cref{prop:cat enriched admits an infty-2 cat structure}, or more precisely \cite[Example 7.4.11]{gepner2015enriched}, implies that the \infcat $\bPerfSys(X)(A \otimes_{\kc} k)$ is enriched over $\Mod_{A \otimes_{\kc } k}$. Similarly, $\bPerfSys(X)(A ) $ is naturally enriched over $\Mod_{A}$.
\end{remark}

\begin{definition}
Let $X \in \pro(\cS^\fc)$ and $A \in \adCAlg$. We refer to the \infcat $\bPerfSys(X)(A)$
as the \infcat of derived \emph{$A \otimes_{\Ok} k$-adic continuous representations of $X$}.
\end{definition}

When $X \simeq *$ we have the following expectable statement:

\begin{lemma}
Let $A \in \adCAlg$. Then there exists a natural equivalence \[\bPerfSys(*)(A) \simeq \Perf(A \otimes_{\Ok} k)\] of stable \infcats. 
\end{lemma}

\begin{proof}
	It suffices to prove that $\bPerfSys(*)(A \otimes_{\kc} k) $ is equivalent to the materialization of $\mathbf{Perf}_k(A \otimes_{\kc} k)$. This follows from the fact that $* \in \ind(\pro(\cS))^\times$ is the unit object combined with \cref{prop:cat enriched admits an infty-2 cat structure}.
\end{proof}

\begin{notation}
Let $X \in \pro(\cS^\fc)$ be a connected profinite space.
Denote by $\pi_X \colon * \to X$ the uniquely defined, up to a contractible space of indeterminacy, morphism in $\pro(\cS^\fc)$. We shall denote by 
	\begin{align*}
		 \pi_X^* \colon \bPerfSys(A \otimes_{\kc} k) & \to \bPerfSys(*)(A \otimes_{\kc} k) \\
		 							      & \simeq \Perf(A \otimes_{\kc } k),
	\end{align*}
the corresponding composite functor in $\Cat$.
\end{notation}

\begin{notation}
Let $A \in \adCAlg$. Let $\mathfrak M \in \mathbf{Coh}^+(A)$ and $ M \in \mathbf{Perf}_k(A \otimes_{\kc} k)$. We shall denote by 
	\[
		\mathbf{End}(M) \coloneqq \mathbf{Map}_{\mathbf{Perf}_k(A \otimes_{\kc} k)}(M, M) \quad \textrm{and} \quad \cEnd(\mathfrak M ) \coloneqq \mathbf{Map}_{\mathbf{Coh}^+(A)}(\mathfrak M , \mathfrak M ) 
	\]
in $\ind(\pro(\cS))$. Moreover, composition in $\mathbf{Perf}_k(A \otimes_{\kc} k)$ induces a naturally defined $\bE_1$-monoid like structure on $\mathbf{End}(M)$. We shall also denote by 
	\[
	\begin{tikzcd}
		\rmB \mathbf{End}(M)  \coloneqq  \big( \dots \ar[r, shift left=-1.5ex] \ar[r, shift left=-0.5ex] \ar[r, shift left=0.5ex] \ar[r, shift left=1.5ex] & \cEnd(M)^{\times 2} \ar[r, shift left=1ex] \ar{r}  \ar[r, shift left=-1ex] &  \cEnd(M) \ar[r, shift left=0.5ex] \ar[r, shift left=-0.5ex] &*  \big) \in \Mon_{\bE_1}(\ind(\pro(\cS)))
	\end{tikzcd}
	\]
the corresponding $\bE_1$-monoid like object in $\ind(\pro(\cS))^\times$. Similarly, the object $\cEnd(\mathfrak M)$ can be naturally upgraded to a well defined object
	\[
		\rmB \cEnd(\mathfrak M ) \in \Mon_{\bE_1}(\ind(\pro(\cS))).
	\]
\end{notation}



\begin{remark} Let $A \in \adCAlg$ and $\mathfrak M \in \mathbf{Coh}^+(A)$. Then multiplication by $p$
	\[
		p \colon \mathbf{End}(\mathfrak M ) \to \mathbf{End}(\mathfrak M),
	\]
	is not a morphism of $\bE_1$-monoid like objects. Nonetheless, whenever $\mathfrak M $ is a formal model for $M \in \mathbf{Perf}_k(A \otimes_{\kc} k)$, the object 
		\[ \cEnd(M) \simeq \mathbf{End}( \mathfrak M) \otimes_{\kc} k, \quad (\mathrm{\cref{lem:mapping_spaces_enriched_in_ind_pro_are_computed_by_mult_by_p}})\]
	does admit a $\bE_1$-monoid like structure in the
	symmetric monoidal Cartesian \infcat $\ind(\pro(\cS))^\times$.
\end{remark}

\begin{notation} Let $X \in \pro(\cS^\fc)$ be a connected profinite space and $A \in \adCAlg$.
	Let $M \in \mathbf{Perf}_k(A \otimes_{\kc} k)$, we shall often denote abusively
		\[
			\Map_{\Mon_{\bE_1}(\ind(\pro(\cS)))}(\Omega X, \cEnd(M) ) \coloneqq \Map_{\Mon_{\bE_1}(\ind(\pro(\cS)))}( X,\rmB \cEnd(M) ) 
		\]
\end{notation}

\begin{proposition}  \label{par} Let $X \in \pro(\cS^\fc)$ be a connected profinite space.
Let $A \in  \adCAlg$ and $M \in \mathbf{Perf}_k(A \otimes_{\kc} k)$.  The fiber of the functor
	\[
		\pi^* \colon \bPerfSys(X)(A \otimes_{\kc} k) \to \Perf(A \otimes_{\kc} k),
	\]
is canonically equivalent to the mapping space
	\[
		\Map_{\Mon_{\bE_1}(\ind(\pro(\cS)))} \big( \Omega X, \mathbf{End}(M) \big) \in \cS.
	\]
\end{proposition}

\begin{proof} Consider the Bar-construction
	\[
		\rmB \cEnd(M) \in \Mon_{\bE_1}(\ind(\pro(\cS))),
	\]	
naturally considered as an object in $\Cat(\ind(\pro(\cS))$, via \cref{prop:monoids embed ff in enriched infcats}. We thus have a natural fully faithful functor of enriched \infcats
	\[
		F_M \colon \rmB \cEnd(M) \to \mathbf{Perf}_k(A \otimes_{\kc} k).
	\]
The fiber
of \[\pi^* \colon \bPerfSys(X) (A \otimes_{\kc} k) \to \Perf(A \otimes_{\kc} k)\]
over $M \in \mathbf{Perf}_k(A \otimes_{\kc} k)$,
is then naturally equivalent to the functor \infcat
	\begin{equation} \label{eqqq}
		\cC \mathrm{ontFun}(X, \rmB \cEnd(M)) \in \Cat.
	\end{equation}
Since both $X$ and $\rmB \cEnd(M)$ lie in the full subcategory of $\bE_1$-monoid like objects
	\[
		\Mon_{\bE_1}(\ind(\pro(\cS))) \subseteq \Cat(\ind(\pro(\cS))),
	\]
it follows that we have a natural equivalence
	\[
		\cC \mathrm{ontFun}(X, \rmB \cEnd(M))  \simeq \Map_{\Mon_{\bE_1}(\ind(\pro(\cS)))}(X, \rmB \cEnd(M)),
	\]
as desired.
\end{proof}

\begin{corollary}
	The functor $\pi_X^* \colon \bPerfSys(A \otimes_{\kc} k) \to \Perf(A \otimes_{\kc} k)$ is a coCartesian fibration associated to the functor
		\[
			F \colon \Perf(A \otimes_{\kc } k) \to \Cat,
		\]
	which sends every perfect $A \otimes_{\kc} k$-module to the space
		\[
			M \in \Perf(A \otimes_{\kc} k) \mapsto \Map_{\Mon_{\bE_1}(\ind(\pro(\cS)))}(X, \cEnd(M)) \in \cS,
		\]
	of \emph{continuous $(A \otimes_{\kc} k)$-adic representations of $X$}.
\end{corollary}

\begin{proof}
	It is an immediate consequence of \cref{par}.
\end{proof}

\begin{corollary} Let $A \in (\adCAlg)^{< \infty}$.
	The functor
		\[
			\pi_X^* \colon \bPerfSys(X)(A \otimes_{\kc} k) \to \Perf(A \otimes_{\kc} k),
		\]
	is conservative.
\end{corollary}

\begin{proof}
	Let $f \colon \rho \to \rho' $ in $\bPerfSys(X)(A \otimes_{\kc} k)$ such that
		\[
			\pi_X^*(f) \colon M \to M',
		\]
	is an equivalence in $\Perf(A \otimes_{\kc}k)$. Since $A$ is truncated it follows that both perfect $(A \otimes_{\kc} k)$-modules $M $ and $M'$ are truncated as well. For this reason,
	\cref{invariance} and its proof imply that $\pi_X^*(f)$ induces an equivalence 
		\[
			\widetilde{f} \colon \cEnd(M) \simeq \cEnd(M'),
		\]
	in $\ind(\pro(\cS))$. Since the forgetful functor
		\[
			\mathrm{forget} \colon \Mon_{\bE_1}(\ind(\pro(\cS))) \to \ind(\pro(\cS)),
		\]
	is conservative, we conclude that $\widetilde{f}$ induces an equivalence of $\bE_1$-monoid like objects
		\[
			\cEnd(M) \to \cEnd(M').
		\]
	Let $g \colon \cEnd(M') \to \cEnd(M)$ be an inverse to $\widetilde{f}$ in $\Mon_{\bE_1}(\ind(\pro(\cS)))$. Then $g$ necessarily fits into a commutative diagram of the form
		\[
		\begin{tikzcd}
			X \ar{r}{\overline{\rho}}  \ar{rd}{\overline{\rho}'}& \rmB \cEnd( \mathfrak M') \ar{d}{g} \\
			 & \rmB \cEnd(\mathfrak M),
		\end{tikzcd}
		\]
	in $\Mon_{\bE_1}(\ind(\pro(\cS)))$. This provides an inverse to $f  \colon \rho \to \rho'$.
\end{proof}

\begin{definition}
We define the $\Cat$-valued functor of \emph{continuous $k$-adic representations of $X$} as the functor
	\[
		\bPerfSys(X)(-) \colon \big( \adCAlg \big)^{< \infty} \to \Cat,
	\]
given on objects by the formula
	\[
		A \in  \big( \adCAlg \big)^{< \infty} \mapsto \bPerfSys(X) (A \otimes_{\kc} k) \in \Cat.
	\]
\end{definition}

An important consequence of \cref{label} is the following result:
\begin{proposition}
The functor $\bPerfSys(X) \colon \big( \adCAlg \big)^{< \infty} \to \Cat$ descends to a well defined functor
	\[
		\bPerfSys(X) \colon \big( \dAfd_k^{< \infty} \big)^{\op} \to \Cat,
	\]
which is given on objects by the formula
	\[
		Z \in \dAfd_k^{< \infty} \mapsto \cC \mathrm{ont} \Fun \big( X, \mathbf{Perf}(\rmR \Gamma (Z, \cO_Z)) \big) \in \Cat.
	\]
\end{proposition}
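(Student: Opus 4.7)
The plan is to reduce the statement to Proposition \ref{label} via the identification of $\Perf_p(X)(A)$ with $\Fun_{\cE\Cat}\bigl(X, \underline{\Perf}(A)\bigr)$.

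First I would unravel the definition. By construction, $\Perf_p(X)(A) = \cC\mathrm{ont}\Fun(X, \Perf(A))$ is the materialization of the internal hom object $\underline{\Fun}(X, \Perf(A))$ computed in the $\ind(\pro(\cS))$-enriched $(\infty,2)$-category $\Cat(\ind(\pro(\cS)))$, where $\Perf(A)$ is viewed as an $\ind(\pro(\cS))$-enriched \infcat. The content of the results in the previous section, most notably \cref{par} together with the discussion surrounding $\underline{\Perf}(A)$, is that this enriched structure on $\Perf(A)$ is canonically equivalent to $\underline{\Perf}(A) \simeq \Perf(A \otimes_{\Ok} k)$: the fibers of $\ev(*)$ are computed precisely in terms of $\cEnd(M) \in \Mon_{\bE_1}(\ind(\pro(\cS)))$, which is the enriched endomorphism monoid of $M$ inside $\underline{\Perf}(A)$. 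Assembling the fibers into the coCartesian fibration over $\Perf(A \otimes_{\Ok} k)^{\simeq}$ yields the desired natural equivalence
\[
	\Perf_p(X)(A) \;\simeq\; \Fun_{\cE\Cat}\bigl(X, \underline{\Perf}(A)\bigr).
\]

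Second, I would apply \cref{label}: the functor $\underline{\Perf} \colon (\adCAlg)^{< \infty} \to \cE\Cat$ sends every generically strong morphism $f \colon A \to A'$ to an equivalence in $\cE\Cat$. Since the assignment $\cC \mapsto \Fun_{\cE\Cat}(X, \cC)$ is functorial on $\cE\Cat$ and therefore preserves equivalences, the induced map
\[
	\Perf_p(X)(A) \;\longrightarrow\; \Perf_p(X)(A')
\]
is an equivalence in $\Cat$. Invoking once more the derived Raynaud localization theorem \cite[Theorem 4.4.10]{antonio2018p}, which realizes $(\dAfd_k^{<\infty})^{\op}$ as the localization of $(\adCAlg)^{<\infty}$ at the saturated class $S^{<\infty}$, the universal property of localization produces the desired factorization
\[
	\Perf_p(X) \colon \bigl(\dAfd_k^{<\infty}\bigr)^{\op} \longrightarrow \Cat,
\]
whose value on $Z \in \dAfd_k^{<\infty}$ is $\Fun_{\cE\Cat}(X, \underline{\Perf}(Z))$ for any chosen formal model.

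The main subtlety lies in the first step: the identification of $\cC\mathrm{ont}\Fun(X, \Perf(A))$ with $\Fun_{\cE\Cat}(X, \underline{\Perf}(A))$ requires that passing from the $\pro(\cS)$-enrichment on $\Perf(A)$ (coming from the tower $\{\Perf(A_n)\}_n$) to the $\ind(\pro(\cS))$-enrichment on $\Perf(A \otimes_{\Ok} k)$ (involving the further localization at $p$) is compatible with the formation of continuous functors out of $X$. This is exactly what \cref{par} verifies fiberwise, and combined with \cref{invariance} it ensures the identification does not depend on auxiliary formal-model choices; once this is established, descent to $(\dAfd_k^{<\infty})^{\op}$ is formal.
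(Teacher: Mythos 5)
Your proof takes essentially the same route as the paper, whose entire argument is a one-line reduction to \cref{label}: you simply spell out the intermediate steps (the identification $\Perf_p(X)(A) \simeq \Fun_{\cE\Cat}(X,\underline{\Perf}(A))$, which is close to tautological given the definitions and the abuse of notation $\Perf(A) = \underline{\Perf}(A)$ in the construction of $\cC\mathrm{ont}\Fun$, and the passage through the universal property of the Raynaud localization). The argument is correct.
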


\begin{proof}
The result is a direct consequence of the analogous statement for $\mathbf{Perf}$ which is the content of \cref{label}.
\end{proof}

\subsection{Truncated enriched objects}

In this \S, we will develop some framework to deal with Postnikov towers in the \infcat $\ind(\pro(\cS))$. Consider the usual $n$-th truncation functor
	\[
		\tau_{\le n} \colon \cS \to \cS.
	\]
We obtain a canonical extension by cofiltered limits
	\[
		\tau_{\le n} \colon \pro(\cS) \to \pro(\cS).
	\]
Given $\{ X_i \}_i \in \pro(\cS)$ we have that 
	\[
		\tau_{\le n}(\{ X_i \}_i ) \simeq \{ \tau_{\le n} X_i \}_i \in \pro(\cS).
	\]
We have the following result:

\begin{lemma}
	Let $\{ X_i \}_{i \in I } \in \pro(\cS)$ be a pro-space such that for every $i \in I$, we have that
		\[
			X_i \in \cS_{\le n},
		\]
	is a $n$-truncated space. Then for every $\{Y_j \}_j \in \pro(\cS)$ there exists a natural equivalence
		\[
			\Map_{\pro(\cS)}(\{Y_j \}_{j } , \{ X_i \}_i ) \simeq \Map_{\pro(\cS)}( \{ \tau_{\le n} Y_j \}_j , \{ X_i \}_i ),
		\]
	of mapping spaces.
\end{lemma}

\begin{proof}
	It follows by the  explicit description of mapping spaces in $\pro(\cS)$ as limit-colimit combined with the universal property of $\tau_{\le n} \colon \cS \to \cS$.
\end{proof}

We will also need to introduce a relative version:

\begin{definition} \label{def:truncated morphisms of pro objects}
Let $f \colon X \to Y $ be a morphism in $\pro(\cS)$ be two pro-spaces indexed by the same diagram $I^\op$, where $I$ is a filtered \infcat. Let $n \ge 1$, we define the \emph{pointwise relative $n$-th truncation of $f$} as
	\[
		\tau_{\le n}(f) \colon X^n \to Y,
	\]
where for each $i \in I$, $\tau_{\le n}(f)_i \colon X^n_i \to Y_i$ coincides with the relative $n$-th truncation of the morphism $f_i \colon X_i \to Y_i,$ \cite[Definition 5.5.6.8 and Proposition 5.5.6.18]{lurie2009higher}.
\end{definition}

\begin{lemma} \label{lem:tau_n of fib of maps in pro}
	Let $f \colon X \to Y$ be a morphism satisfying the conditions of \cref{def:truncated morphisms of pro objects}. Then the fiber of 
		\[
			\fib(\tau_{\le n}(f)) \in \pro(\cS),
		\]
	is naturally equivalent to $\tau_{\le n}(\fib(f))$.
\end{lemma}

\begin{proof}
	Since fibers commute with cofiltered limits we have that
		\begin{align*}
			\fib( \tau_{\le n}(f))  & \simeq \fib( \lim_{i \in I^\op} \tau_{\le n}(f )_i) \\
						      & \simeq \lim_{ i \in I^\op} \fib(\tau_{\le n}(f)_i) \\
						      & \simeq \lim_{i \in I^\op} \tau_{\le n}(\fib(f_i)) \\
						      & \simeq \tau_{\le n}(\fib(f)),
		\end{align*}
	as desired.
\end{proof}

Similarly, by extending via filtered colimits we obtain a canonical functor $\tau_{\le n} \colon \ind(\pro(\cS)) \to \ind(\pro(\cS)).$ In this case, we have the analogue:

\begin{lemma} \label{lem:universal property of truncation}
	Let $X \in \ind(\pro(\cS))$ such that 	
		\[
			X \simeq \tau_{\le n} X,
		\]
	in $\ind(\pro(\cS))$. Then for every $ Y \in \ind(\pro(\cS))$ we have that
		\[
			\Map_{\ind(\pro(\cS))}(Y, X) \simeq \Map_{\ind(\pro(\cS))}( \tau_{\le n}(Y), X),
		\]
	in $\cS$.
\end{lemma}

\begin{proof}
	The proof follows as in the case of \cref{lem:tau_n of fib of maps in pro}
\end{proof}

We now specialize the above considerations to our setting: let $A \in \adCAlg$ denote a truncated derived $\kc$-adic algebra.

\begin{lemma} \label{lem:monoidal structure on truncations}
	Let $M \in \mathbf{Perf} (A \otimes_{\kc} k)$. Then for every for every formal model $ \mathfrak M \in \mathbf{Coh}^+(A)$ and $n \ge 1$, both objects
		\[
			\tau_{\le n} (\cEnd(M))  \quad \textrm{and    } \ \ \ \ \tau_{\le n}(\cEnd(\mathfrak M)),
		\]
	admit natural $\bE_1$-monoid-like structures induced by the ones on $\cEnd(M)$ and $\cEnd(\mathfrak M)$, respectively.
\end{lemma}

\begin{proof}
	We have an equivalence
	\[\tau_{\le n}(\cEnd(M)) \simeq \colim_{\mathrm{mult} \ p} \tau_{\le n}(\cEnd(\mathfrak M))\]
	in $\ind(\pro(\cS))$. Moreover, the functor 
			\[
			\mathrm{forget} \colon \Mon_{\bE_1}(\ind(\pro(\cS))) \to \ind(\pro(\cS))
		\]
	is conservative and preserves filtered colimits. Since the cartesian monoidal structure on $\ind(\pro(\cS))$ is stable under filtered colimits it follows that if $\tau_{\le n}(\cEnd( \mathfrak M))$ admits a natural $\bE_1$-monoid like structure, then so does the filtered
	colimit $\tau_{\le n}(\cEnd(M))$, in $\ind(\pro(\cS))$. For this reason, we are
 	reduced to prove the statement of the lemma for $\tau_{\le n}(\cEnd(\mathfrak M))$.
	The latter folllows from the fact that the transition maps
		\[
			\dots \to \cEnd(M \otimes_{\kc} \cO_{k, n}) \to \cEnd( \mathfrak M \otimes_{\kc} (\kc)_{n-1} ) \to \dots \to \cEnd(\mathfrak M \otimes_{\kc} (\kc)_{2}) \to \cEnd(\mathfrak M \otimes_{\kc} (\kc)_1)
		\]
	in $\pro(\cS)$, are morphisms of $\bE_1$-like objects. Indeed, the above diagram can be regarded naturally as a diagram in the full subcategory $\Mon_{\bE_1}(\cS) \subseteq \Mon_{\bE_1}(\pro(\cS))$. It follows that after applying the usual truncation functor 
		\[\tau_{\le n} \colon \cS \to \cS,\]
	we obtain a diagram in
		\[
			\dots \to \tau_{\le n} \cEnd(M \otimes_{\kc} \cO_{k, n}) \to \tau_{\le n} \cEnd( \mathfrak M \otimes_{\kc} \cO_{k, n-1} ) \to \dots \to \tau_{\le n} \cEnd(\mathfrak M \otimes_{\kc} \cO_{k, 2}) \to \tau_{\le n} \cEnd(\mathfrak M \otimes_{\kc} \cO_{k, 1}),
		\]
	in $\Mon_{\bE_1}(\cS)$. By passing to the limit, we obtain that the object $\tau_{\le n}(\cEnd(\mathfrak M)) $ can be naturally upgraded into an object in the \infcat $\Mon_{\bE_1}(\pro(\cS))$, as desired.
\end{proof}

\begin{lemma} \label{lem:fib of End's of formal to an}
The canonical morphism $q \colon \cEnd(\mathfrak M) \to \cEnd(M)$ admits a fiber equivalent to 
	\[
		\fib( q )\simeq \colim_{n \ge 1} (\fib( \cEnd(\mathfrak M) \xrightarrow{p^n } \cEnd(\mathfrak M))),
	\]
in $\Mon_{\bE_1}(\ind(\pro(\cS)))$.
\end{lemma}

\begin{proof} The forgetful functor
	\[
		\mathrm{forget} \colon \Mon_{\bE_1}(\ind(\pro(\cS)) \to \ind(\pro(\cS)),
	\]
is conservative and commutes with limits. Therefore, we are reduced to compute the fiber $\fib(q)$ in $\ind(\pro(\cS))$.
Since
	\[
		\cEnd(M) \simeq \colim_p \cEnd(\mathfrak M),
	\]
combined with the fact that filtered colimits preserve finite limits we conclude that computing the fiber of $q$, in $\Mon_{\bE_1}(\ind(\pro(\cS)))$, reduces to compute the fiber of the multiplication by $p^n$ of $\cEnd(\mathfrak M)$, for each $n \ge 1$. The claim of the
lemma now follows.
\end{proof}

\begin{definition}
We define the \emph{relative $n$-truncation} $q_n \colon \cEnd(\mathfrak M)_{\le n} \to \cEnd(M)$ of the morphism
as the colimit
	\[
		\colim_{n \ge 1} \tau_{\le n}( p^n ) \in \ind(\pro(\cS)).
	\]
\end{definition}

\begin{remark}
	Thanks to \cref{lem:monoidal structure on truncations} and \cref{lem:fib of End's of formal to an} we deduce that \[\underset{n \ge 1}{\colim} \tau_{\le n}(p^n),\] admits a canonical structure of $\bE_1$-monoid like object in $\ind(\pro(\cS))$, induced from the
	$\bE_1$-monoid like structure on each fiber
	$\fib(p^n)$.
\end{remark}

\begin{notation}
	Consider the suspension functor $\Sigma \colon \cS \to \cS$. We shall abusively denote by $\Sigma \colon \ind(\pro(\cS)) \to \ind(\pro(\cS))$ the extension of the latter along ind-pro objects in $\cS$.
\end{notation}

\begin{lemma} \label{lem:fiber_of_truncation_of_End's} Let $n \ge 0$. Consider the natural morphism
			\[ q_n \colon \cEnd(\mathfrak M) _{\le n}\to \cEnd(M).\]
 		Then we have a natural equivalence	
			\[\fib(q_n) \simeq \tau_{\le n}(\fib( \cEnd(\mathfrak M ) \to \cEnd(M))),\]
		in the \infcat $Mon_{\bE_1}(\ind(\pro(\cS)))$. Moreover, for each $n \ge 0$, we have natural morphisms	
		\[
			j_{n+1} \colon \cEnd(\mathfrak M)_{\le n+1} \to \cEnd(\mathfrak M)_{\le n},
		\]
	whose fiber is naturally equivalent to
		\[
			\fib(j_{n+1}) \simeq  \Sigma^{n+1}(\pi_{n+1}(\fib(q))),
		\]
	in $\Mon_{\bE_1}(\ind(\pro(\cS)))$.
\end{lemma}

\begin{proof}
	Since the forgetful functor 
		\[
			\mathrm{forget} \colon \Mon_{\bE_1}(\ind(\pro(\cS))) \to \ind(\pro(\cS)),
		\]
	is conservative and commutes with fibers, we are reduced to compute $\fib(q_n)$ and $\fib(j_{n+1})$ in $\ind(\pro(\cS))$. 
	The first assertion follows from \cref{lem:tau_n of fib of maps in pro} combined with the fact that filtered colimits preserve fibers and truncations.
	The existence of the morphism of $j_{n+1}$ is guaranteed by the universal property of the truncation functor.
	To compute $\fib(j_{n+1})$ we notice that the latter fits into a fiber sequence of the form	
		\[
			\fib(j_{n+1}) \to \fib( {q_{n+1}}) \to \fib(q_n).
		\]
	Since $\fib(q_{n+1}) \simeq \tau_{\le n + 1} (\fib(q))$ and $\fib(q_n) \simeq \tau_{\le n}(\fib(q))$, we deduce that 
	$\fib(j_{n+1}) \simeq  \Sigma^{n+1}(\pi_{n+1}(\fib(q)))$, as desired.
\end{proof}

\begin{remark}
	Let $n \ge 0$. Thanks to \cref{lem:fiber_of_truncation_of_End's} the fiber $\fib(j_{n+1})$ is \emph{ind}-$p$-nilpotent. Indeed, since the morphism \[q \colon \cEnd( \mathfrak M) \to \cEnd(M),\] is an equivalence after passing to the colimit under multiplication by $p$, we deduce from \cref{lem:fib of
	End's of formal to an} that
	$
		\fib(j_{n+1}),
	$ is a colimit of $p$-nilpotent objects.
\end{remark}

\subsection{Lifting results for continuous $p$-adic representations of profinite spaces} In this \S, we generalize to the setting of higher algebra the following standard result: let $G$ be a profinite group and
	\[
		\rho \colon G \to \GLn(\overline{\bQ}_p),
	\]
be a continuous representation of $G$. Then, up to conjugation, $\rho$ factors through the canonical inclusion
	\[
		\GLn(\overline{\bZ}_p) \subseteq \GLn(\overline{\bQ}_p).
	\]
Our goal is to prove an analogue when we replace $\rmB G$ by an object $X \in \pro(\cS^\fc)$ and we allow more general coefficients than simply a finite extension $E$ of $\bQ_p$. A priori, one cannot expect to have such an analogue in this degree of generality. For this reason, we will need to introduce the following definition:

\begin{definition}
Let $X \in \pro(\cS^\fc)$ be a connected profinite space. We say that $X$ is \emph{$p$-adically cohomological
compact} if, for any discrete $p$-torsion $\bZ_p$-module $N \in \Mod^\heartsuit_{\bZ_p}$, we have an equivalence of
mapping spaces
	\[
		\Map_{\Mon_{\bE_1}(\pro(\cS))} \big( \Omega X, N \big) \simeq \colim_\alpha
		\Map_{\Mon_{\bE_1}(\pro(\cS))} \big( \Omega X, N_\alpha \big),
	\]
for any presentation of $N \simeq \colim_{\alpha } N_\alpha$ as a filtered colimit of finite type (discrete) $\bZ_p$-modules $N_\alpha \in \Mod_{\bZ_p}^\heartsuit$.
\end{definition}

\begin{remark}
It is possible to give an analogous definition in the case where $X \in \cS$. The latter is equivalent
to require that $X$ admits a cellular decomposition with finitely many cells, in each dimension. Nonetheless,
$X$ itself might have infinitely many non-zero (finite) homotopy groups.
\end{remark}

\begin{definition}
	Let $f \colon Y \to X$ be a morphism in $\pro(\cS^\fc)$. We say that $f$ is a \emph{finite morphism} if its fiber
		\[
			\mathrm{fib}(f) \in \pro(\cS^\fc),
		\]
	is equivalent to a finite constructible space, i.e., an object in $\cS^\fc$.
\end{definition}

\begin{example}
\begin{enumerate}
\item
Let $Y \to X$ is a finite morphism in $\pro(\cS^\fc)$.
If we assume that $X$ is $p$-adically cohomological compact,
then so it is $Y$. More generally, the notion of $p$-adic compactness is stable under
fiber sequences.
\item Suppose $X \in \pro(\cS^\fc)$ is the \'etale homotopy type of a smooth variety over an
algebraically closed field. Then $X$ is $p$-adicallly cohomological compact. This is a consequence of the fact that $\pi_1^\et(X)$ is topologically of finite type. Indeed, in such case every continuous representation
	\[
		\rho \colon \Omega (\Sh^\et(X)) \to N ,
	\]
factors through the canonical morphism $\mathrm{can} \colon \Omega(\Sh^\et(X)) \to \pi_1^\et(X)$ (obtained by applying $\pi_0$ pro-component-wise). The result now follows from the observation that such $\rho$ is determined by the image of a finite set
of topological generators, and their images land in a sufficiently large $N_\alpha \subseteq N$.
\end{enumerate}
\end{example}

Fix now $X \in \pro(\cS^\fc)$ a connected $p$-adically compact profinite space. Let $A \in (\adCAlg)^{< \infty}$.
The main result of this \S \ can be formulated as:

\begin{theorem} \label{homotopy1}
Suppose we are given
$\rho \in \bPerfSys(X)(A \otimes_{\kc} k)$ such that 
	\[
		M \coloneqq \pi^*(\rho) \in \Perf(A \otimes_{\Ok} k),
	\]
admits a formal model $\mathfrak M \in \Coh(A)$. Then there exists $Y \in \Mon_{\bE_1}(
\pro(\cS^\fc))$ together with a finite morphism of enriched $\bE_1$-monoid-like objects
	\[
		f \colon Y \to \Omega X
	\]
in $\pro(\cS^\fc)$, such that we have a commutative diagram
	\[
	\begin{tikzcd}
		Y \ar{d} \ar{r}{\rho'} & \cEnd(\mathfrak M) \ar{d} \\
		\Omega X \ar{r}{\rho} & \cEnd(M)
	\end{tikzcd}
	\]
in the \infcat $\Mon_{\bE_1}(\ind( \pro(\cS)))$.
\end{theorem}


\begin{proof} The fact that
$A \in \adCAlg$ is assumed to be truncated, implies that so it is  $M $. For this reason, we can assume that also $\mathfrak M \in \mathbf{Coh}^+(A)$ is truncated (otherwise we replace $\mathfrak M$ by a sufficiently large truncation). Therefore, there exists a sufficiently
large $m > 0$ such that
	\begin{equation} \label{eq:iptrun}
		\tau_{\leq m} \cEnd(\mathfrak M) \simeq \cEnd(\mathfrak M) \quad \textrm{and } \ \ \ \tau_{\le m} \cEnd(M) \simeq \cEnd(M).
	\end{equation}
We now proceed to construct such a profinite $\bE_1$-monoid like  object $Y \in \Mon_{\bE_1}(\pro(\cS^\fc))$,
satisfying the conditions of the statement. We argue by induction on the relative Postnikov tower of the canonical morphism
	\[
		q \colon \cEnd(\mathfrak M) \to \cEnd(M).
	\]
For each $n \ge 0$, we construct morphisms $Y_{\le n} \to X$ which factor $\rho$ along the natural morphism
	\[q_n \colon \cEnd(\mathfrak M)_{\le n} \to \cEnd(M).\] Moreover, we shall prove that $Y_{\le n} \to X$ admits a finite fiber.
Let $n = 0 $, and consider the $0$-th relative truncation of $q$
	\[
		\cEnd(\mathfrak M) \to  \cEnd( \mathfrak M)_{\leq 0}  \to \cEnd(M),
	\]
in the \infcat $\Mon_{\bE_1}(\ind(\pro(\cS)))$. By construction, the morphism $q_0 \colon \cEnd(\mathfrak M)_{\le 0} \to \cEnd(M)$ fits into a pullback square of the form
	\begin{equation} \label{eq:pullback 0 level for lifting}
	\begin{tikzcd}
		\cEnd(\mathfrak M)_{\le 0} \ar{r} \ar{d} & \pi_0(\cEnd(\mathfrak M)) \ar{d} \\
		\cEnd(M) \ar{r} & \pi_0(\cEnd(M)),
	\end{tikzcd}
	\end{equation}
in $\Mon_{\bE_1}(\ind(\pro(\cS)))$. Indeed, the latter assertion follows by a direct computation of the fibers of each displayed vertical morphism, using the description of $q_n$ provided in \cref{lem:fiber_of_truncation_of_End's}.
Thanks to \cref{lem:universal property of truncation}, the composite
	\[
		\Omega X \xrightarrow[]{\rho} \cEnd(M) \to \pi_0(\cEnd(M)),
	\]
factors as a morphism
	\[
		\rho_0 \colon \pi_0(\Omega X) \to \pi_0(\cEnd(M)),
	\]	
in $\Mon_{\bE_1}(\ind(\pro(\cS)))$. We can naturally identify $\pi_0(\Omega X) = \pi_1(X)$ with a profinite group and $\pi_0(\cEnd(\mathfrak M) )$ with a topological group (the topology on the latter being induced by the ind-pro-structure).
Moreover, we can also regard $\pi_0(\cEnd(\mathfrak M))$ as an open subgroup of $\pi_0(\cEnd(M))$. Since $\pi_1(X)$ is a profinite
group, it follows that the inverse image
	\[
		\rho_0^{-1} (\pi_0(\cEnd( \mathfrak M))) \leq \pi_1(X),
	\]
corresponds to an open subgroup of $\pi_1(X)$.
Let $U \lhd \pi_1(X)$ be an open normal subgroup such that
	\[
		\rho_0(U) \subseteq \pi_0(\cEnd(\mathfrak M)) \subseteq \pi_0(\cEnd(M)),
	\]
and such that $\pi_1(X) / U \cong G$, where $G$ is a finite group. Consider the pullback
diagram
	\[
	\begin{tikzcd}
		Y_{\leq 0 } \ar{r} \ar{d}{h_0} & U \ar{d} \\
		\Omega X \ar{r} & \pi_1(X)
	\end{tikzcd}
	\]
now taken in the \infcat $\Mon_{\bE_1}(\pro(\cS^\fc))$. By construction, the morphism
$Y_{\leq 0} \to \Omega X$ admits a finite constructible fiber, namely $\rmB G$. Furthermore,
we have an equivalence
	\begin{equation} \label{eq:quotient_of_profinite_homotopy_types_by_finite_group}
		\Omega X \simeq Y_{\leq 0} / \rmB G
	\end{equation}
in the \infcat $\Mon_{\bE_1}(\pro(\cS^\fc))$. Moreover, we have a commutative diagram
	\[
	\begin{tikzcd}
		Y_{\le 0} \ar{r} \ar{d} & \pi_0( \cEnd(\mathfrak M)) \ar{d} \\
		\Omega X \ar{r} & \pi_0(\cEnd(M)),
	\end{tikzcd}
	\]
in $\Mon_{\bE_1}(\ind(\pro(\cS)))$. The pullback diagram \eqref{eq:pullback 0 level for lifting} implies that we have a commutative diagram of the form
	\[
	\begin{tikzcd}
		Y_{\le 0} \ar{r} \ar{d} &  \cEnd(\mathfrak M)_{\le 0} \ar{d} \\
		\Omega X \ar{r} & \cEnd(M),
	\end{tikzcd}
	\]
as desired.
The base step of our inductive reasoning is thus
finished. Let $n \geq 0 $, and suppose we have constructed a profinite space $Y_{\le n} \in \pro(\cS^\fc)$ fitting into a commutative
diagram
	\[
	\begin{tikzcd}
		Y_{\leq n } \ar{r} \ar{d}{h_n} & \cEnd(\mathfrak M)_{\leq n } \ar{d} \\
		\Omega X \ar{r}{\rho} & \cEnd(M)
	\end{tikzcd},
	\]
in $\Mon_{\bE_1}(\ind(\pro(\cS)))$, such that 
	\[\fib(h_n) \in \pro(\cS^\fc),\]
is finite constructible. Let
	\[
		q_{n+1} \colon \cEnd(\mathfrak M)_{\leq n+1} \to \cEnd(M)
	\]
be as in \cref{lem:fiber_of_truncation_of_End's}.
We have a commutative diagram of the form
	\[
	\begin{tikzcd}
		\cEnd(\mathfrak M)_{\leq n+1} \ar{r}{j_{n+1}} \ar{rrrd} & \cEnd(\mathfrak M)_{\leq n } \ar{r}{j_{n}} \ar{rrd} & 
		\dots \ar{r}{j_1}
		& \cEnd(\mathfrak M)_{\leq 0} \ar{d}{q_0} \\
		& & & \cEnd(M)
	\end{tikzcd}
	\]
in $\Mon_{\bE_1}(\ind(\pro(\cS)))$. Moreover, \cref{lem:fiber_of_truncation_of_End's} implies that
	\[
		\fib(j_{n+1}) \simeq \colim_k \Sigma^{n+1} \pi_{n+1} ( \fib(q)) .
	\]
Consider the following pullback diagram
	\[
	\begin{tikzcd}
		\widetilde{Y}_{\leq n+1} \ar{r} \ar{d}{\pi_n} & \cEnd(\mathfrak M)_{\leq n+1} \ar{d}{j_{n+1}} \\
		Y_{\leq n} \ar{r} & \cEnd(\mathfrak M)_{\leq n}
	\end{tikzcd},
	\]
in the \infcat $\Mon_{\bE_1}(\ind(\pro(\cS)))$. By construction, the fiber of the morphism $\pi_n \colon \widetilde{
Y}_{\leq n+1} \to Y_{\leq n}$ is equivalent to
	\[
		\fib(\pi_n) \simeq \fib(j_{n+1})  ,
	\]
in $\Mon_{\bE_1}(\ind(\pro(\cS)))$.
The fiber sequence
	\[
		\fib(\pi_n) \to \widetilde{Y}_{\leq n+1} \to Y_{\leq n},
	\]
is thus classified by a morphism 
	\[
		\varphi_n \colon Y_{\leq n } \to \Sigma^{n+2} \pi_{n+1} ( \fib(q)) ,
	\]
in $\Mon_{\bE_1}(\ind(\pro(\cS)))$. We observe that $\pi_{n+1}(\fib(q))$ is a discrete
group and the monoid structure on
	\[\Sigma^{n+2} \pi_{n+1}(\fib(q)) \in \Mon_{\bE_1}(\ind(\pro(\cS)))\]
is necessarily commutative, as $n + 2 \geq
2$. Thanks to \cref{lem:fib of End's of formal to an} it follows that 
	\begin{equation} \label{eq: Sigma n+1 pi_n+1 fib(q)}
		\Sigma^{n+2} \pi_{n+1}(\fib(q)) \simeq \colim_m \Sigma^{n+2}  \pi_{n+1}(\fib( p^m)),
	\end{equation}
in $\Mon_{\bE_1}(\ind(\pro(\cS)))$.
By the fact that $\Sigma^{n+2} \pi_{n+1}(\fib(p^n))$ is a commutative monoid-like object, it follows that each transition morphism 
	\[
		\Sigma^{n+2}  \pi_{n+1}(\fib( p^m)) \to \Sigma^{n+2}  \pi_{n+1}(\fib( p^{m+1}))
	\]
is a morphism of $\bE_1$-monoid like objects in $\ind(\pro(\cS))$. For this reason, we conclude that
	\[
		\varphi_n \colon Y_{\le n} \to  \Sigma^{n+2} \pi_{n+1} ( \fib(q)),
	\]
admits a factorization $Y_{\le n} \to \Sigma^{n+2} \pi_{n+1}(\fib(p^k))$, in $\Mon_{\bE_1}(\ind(\pro(\cS)))$, for sufficiently large $k \ge 1$. This induces a fiber sequence of the form
	\[
		\Sigma^{n+2}\pi_{n+1} ( \fib(p^k ))  \to \overline{Y}_{ \leq n+1} \to Y_{\leq n },
	\]
in $\Mon_{\bE_1}(\ind(\pro(\cS)))$.
Moreover, we have a natural composite
	\[
		\overline{Y}_{\leq n +1} \to \widetilde{Y}_{n+1} \to  \cEnd(\mathfrak M)_{\leq n +1} .
	\]
As $X$ is $p$-adically cohomological compact and the morphism $ Y_{\leq n } \to \Omega X$ admits
a finite constructible fiber, it follows that $ Y_{\leq n }$ is itself
$p$-adically cohomological compact. We can furthermore
regard \[ \pi_{n+1} (\fib( p^k \colon \cEnd(\mathfrak M) \to \cEnd(\mathfrak M))) ,\]
naturally as a discrete 
$\bZ/p^k$-module. We can thus write
	\[
		\pi_{n+1}(\fib(p^k)  \simeq \colim_{\alpha} N_{\alpha} ,
	\]
as a filtered colimit of discrete $\bZ/ p^k$-modules, $N_\alpha \in \Mod_{\bZ/ p^k}^\heartsuit$. By $p$-adic cohomological compactness, we obtain an equivalence
of mapping spaces
	\begin{align*}
		\Map_{\Mon_{\bE_1}(\pro(\cS^\fc))} \big( Y_{\leq n },\Sigma^{n+2} \pi_{n+1}(\cEnd(\mathfrak M) / \Omega(p^k)) \big) & \simeq \\
		&  \simeq \colim_\alpha \Map_{\Mon_{\bE_1}(\pro(\cS^\fc))} \big( Y_{\leq n}, 
		\Sigma^{n+2} N_\alpha  \big).
	\end{align*}
Therefore, the map $\varphi_n$ above factors through a morphism
	\[
		\varphi_{\beta, n } \colon Y_{\leq n } \to \Sigma^{n+2} N_\beta,
	\]
in $\Mon_{\bE_1}(\cS^\fc)$, for some large enough index $\beta$. Such factorization produces an extension
	\[
		\Sigma^{n+2} N_\beta \to Y_{\leq n +1} \xrightarrow{j_{n+1}} Y_{\leq n } ,
	\]	
in the \infcat $\Mon_{\bE_1} (\pro(\cS^\fc))$. Moreover, by construction, it follows that
the composite
	\[
		Y_{\leq n + 1} \to Y_{\leq n } \to \dots \to \Omega X \to \cEnd(M)
	\]
factors through the canonical morphism $\cEnd(\mathfrak M)_{\leq n + 1} \to \cEnd(M)$. The inductive
step is thus completed. 

In order to finish the proof of the statement it suffices now to observe that there exists a sufficiently
large $m \ge 1$ such that
	\[
		\cEnd(\mathfrak M)_{\leq m} \simeq \cEnd(\mathfrak M).
	\]
This is a consequence of the fact that $\mathfrak M$ is $m$-truncated combined with the construction of $\cEnd(\mathfrak M)_{\le m}$ provided in \cref{lem:fiber_of_truncation_of_End's}.
We have thus produced a  finite morphism $Y_{\le m} \to \Omega X$ fitting into a
commutative diagram
	\[
	\begin{tikzcd}
		Y \ar{d} \ar{r} & \cEnd(\mathfrak M) \ar{d} \\
		\Omega X \ar{r} & \cEnd(M)
	\end{tikzcd}
	\]
in the \infcat $\Mon_{\bE_1}( \ind(\pro(\cS)))$, as desired.
\end{proof}

\begin{remark}
Recall that given for any $A \in \adCAlg$ the natural morphism of derived rings $ f \colon A \to A \otimes_{\kc} A$ induces a base change functor
	\[
		f^* \colon \mathbf{Perf}_{\kc}(A) \to \mathbf{Perf}_k(A \otimes_{\kc} k) ,
	\]
in $\Cat(\ind(\pro(\cS)))$. Therefore, we have an induced pullback functor at the level of \infcats of continuous representations
	\[
		f^* \colon \bPerfSys(X)(A) \to \bPerfSys(X)(A \otimes_{\kc} k).
	\]
\end{remark}
	
\begin{definition} Let $X \in \pro(\cS^\fc)$ and $A \in \adCAlg$ be as above.
We say that $ \rho \in \bPerfSys(X)(A \otimes_{\kc} k)$ is \emph{liftable} if it lies in the essential image of the
base change functor
	\[ \bPerfSys(X)(A) \to \bPerfSys(X)(A \otimes_{\kc} k).\]
\end{definition}

Fix $A \in (\adCAlg)^{< \infty}$. As a consequence of \cref{homotopy1} we have the following crucial statement:

\begin{corollary} \label{homotopy2}
Let $X \in \pro(\cS^\fc)$ be a connected and $p$-adically cohomological compact profinite space.
Then every object
$\rho \in \bPerfSys (X)(A \otimes_{\kc} k )$ is a retract of a liftable $\rho' \in  \bPerfSys(X)(A \otimes_{\kc} k)$. In particular,
the pullback functor
	\[
		f^* \colon \bPerfSys(X) (A ) \to \bPerfSys(X)(A \otimes_{\kc} k),
	\]
induces an equivalence 
	\[\bPerfSys(X)(A) \otimes_{\kc} k \simeq \bPerfSys(X)(A \otimes_{\kc} k),\]
in $\Cat^{ \mathrm{perf}}$.
\end{corollary}

\begin{proof}
Let $\rho \in \bPerfSys(X)(A \otimes_{\kc} k)$,
$M \coloneqq \pi^*(\rho) \in \Perf(A \otimes_{\Ok} k )$
and let $g \colon Y \to X$ be a finite morphism in $\pro(\cS^\fc)$. Suppose further that
$Y$ connected and the composite
	\[
		\Omega Y \xrightarrow{g} \Omega X \to \cEnd(M)
	\]
admits a factorization via $\cEnd(\mathfrak M) \to \cEnd(M)$, where $\mathfrak M \in \mathbf{Coh}^+(A)$ is a formal model for $M$. Such datum exists thanks to \cref{homotopy1}.
The proof of the latter implies that we have a commutative
diagram of the form
	\[
	\begin{tikzcd}
		Y \simeq Y_{\leq n+1} \ar{r}{g_n} \ar{rrrd} & Y_{\leq n } \ar{r}{g_{n-1}} \ar{rrd} & 
		\dots \ar{r}{j_0}
		& Y_{\leq 0} \ar{d}{g_0} \\
		& & & X
	\end{tikzcd}
	\]
in the \infcat $\pro(\cS^\fc)$, for which we have 
	\[X \simeq Y_{\leq 0 } / \Gamma,\]
in $\pro(\cS^{\fc})$,
where $\Gamma$ is a suitable finite group (not necessarily abelian), see \eqref{eq:quotient_of_profinite_homotopy_types_by_finite_group}. In
particular, we have an equivalence of \infcats
	\[
		\bPerfSys(X)(A \otimes_{\kc} k) \simeq \bPerfSys(Y_{\leq 0})(A \otimes_{\kc} k)^{\rmB \Gamma},
	\]
of $A \otimes_{\Ok}k$-linear stable \infcats. Moreover, the proof of \cref{homotopy1} implies that
for each integer $0 \leq i \leq n -1$ we can choose the morphism
	\[
		g_{i} \colon Y_{\leq i+1} \to Y_{\leq i},
	\]
in such a way that $g_i$ is a $M_i[n+2]$-torsor, where $M_i$ is a finite abelian group. As $A \otimes_{\Ok} k$ is $k$-linear and $k$ is
a field of characteristic zero, it follows that we have an equivalence of $A \otimes_{\kc} k$-linear stable
\infcats
	\[
		\bPerfSys(Y_{\leq 0 } )(A \otimes_{\kc}) \simeq \bPerfSys(Y_{\leq i }) (A \otimes_{\kc} k),
	\]
for each integer $0 \leq i \leq n $. One thus deduces inductively an equivalence
of $(A \otimes_{\kc} k)$-linear \infcats
	\[
		\bPerfSys(X)(A \otimes_{\kc} k) \simeq \bPerfSys(Y)(A \otimes_{\kc} k)^{\rmB \Gamma}.
	\]
We are thus provided with an adjunction
	\[
	\begin{tikzcd}
		g^* \colon \bPerfSys(X)(A) \arrow[r, shift left] & \bPerfSys(Y)(A) \colon g_* 
		\arrow[l, shift left]
	\end{tikzcd}
	\]
where $g_*$ denotes the restriction functor along $g \colon Y \to X$. Given $\rho \in
\bPerfSys(Y)(A)$ we have an equivalence
	\[
		g^* \rho \simeq \rho \otimes_{\Ok} k [ \Gamma],
	\]
where $k [\Gamma]$ denotes the free $k$-algebra on the finite group $\Gamma$. Moreover, the
representation $\rho$ is a retract of $g_* g^* \rho$, given by the trivial morphism of groups
	\[
		\{ 1\} \to \Gamma.
	\]
Observe further that the representation $g_* (g^* ( \rho))$ is liftable by the choice of $Y$, since
$g^*(\rho)$ was already so. This finishes the proof of the first assertion of the Corollary.

Let us now prove that
$\bPerfSys(X)(A \otimes_{\kc} k)$ is equivalent to the idempotent completion of
\[\bPerfSys(X)(A) \otimes_{\Ok} k .\] It suffices, to prove that $\bPerfSys(X)(A \otimes_{\kc} k)$
is idempotent complete. By the first part of the proof we reduce ourselves to show that given any idempotent morphism
	\[
		f \colon \rho \to \rho,
	\]
for which $\rho$ is liftable, both
	\[
		\fib(f) \quad \textrm{and } \ \ \  \cofib(f)
	\]
do exist in the  \infcat $\bPerfSys(X)(A \otimes_{\kc} k)$. Let
\[M \coloneqq \pi^*(\rho) \in \Perf(A \otimes_{\Ok} k).\]
By evaluation, $f$
induces an idempotent morphism
	\[
		\widetilde{f} \colon M \to M,
	\]
in $\Perf(A \otimes_{\kc} k)$.
We might not be able to find a lift of $\widetilde{f}$ directly on $\Perf(A)$. Nonetheless \cite[Corollary 3.17]{Antonio_Porta_Non_archimedean_Hilbert} combined with
\cite[Proposition 4.4.5.20]{lurie2009higher} imply that there exists a formal model $\mathfrak M \in \Coh(A)$
for $M$, for which the diagram 
	\[\widetilde{f} \colon \mathrm{Idem} \to \Perf(A \otimes_{\Ok} k ),\]
induced by $\widetilde{f}$, lifts to a formal model
diagram $\overline{f} \colon \Idem \to \Coh(A)$.

It follows that $\overline{f}$ induces a well defined diagram
	\[
		\overline{f}^{\cont} \colon \Idem \to \cC \mathrm{ontFun}(X, \mathbf{Coh}^+(A)),
	\] 
corresponding to an idempotent morphism
	\[
		\overline{f}^\cont \colon \overline{\rho} \to \overline{\rho},
	\] 
where $\overline{\rho} \colon \Omega( X) \to \cEnd(\mathfrak M)$ denotes a (suitable) continuous $A$-adic representation of $X$. Moreover,
$\overline{f}^\cont$ admits both a fiber and a cofiber in $\cC \mathrm{ontFun}(X, \mathbf{Coh}^+(A))$. Namely,
	\[
		\Omega(X) \to \cEnd(\fib(\overline{f})) \quad \textrm{and} \quad \Omega(X) \to \cEnd(\cofib(\overline{f})).
	\]
The result now follows from the observation that the image, under (exact) base change functor along $A \to A \otimes_{\kc} k$, of both $\fib(\overline{f}^{\cont})$ and $\cofib(\overline{f}^{\cont})$ lie in the full subcategory
	\[
		\bPerfSys(X)(A \otimes_{\kc} k) \subset \cC \mathrm{ontFun}(X, \mathbf{Coh}^+(A \otimes_{\kc} k)),
	\]
and they correspond to the fiber and cofiber of the idempotent morphism $f$, respectively.
\end{proof}

\section{Representability of the moduli}

\subsection{Representability theorem} As we shall see, the moduli stack
$\LocSys(X) \colon \Afd_k^\op \to \cS$ admits a natural derived extension
which is representable by a derived (geometric) $k$-analytic stack.
In order to prove this assertion, we will use in a crucial way the $k$-analytic analogue of the Artin-Lurie representability theorem, proved
in \cite[Theorem 7.1]{porta2017representability}.
As it will be of fundamental importance we shall motivate such result.

\begin{definition}
Denote by $( \dAfd_k, \tau_\et, \rmP_\sm)$ the \emph{derived $k$-analytic
geometric context} where
$\tau_\et$ denotes the \'etale topology on $\dAfd_k$ and $\rmP_\sm$ denotes
the class of smooth morphisms on $\dAfd_k$, see \cite[Definition 5.1]{porta2016derived} and \cite[Definition 5.46]{porta2017representability}.
\end{definition}

\begin{definition}
Let $F \in \dSt \big( \dAfd_k, \tau_\et \big)$ be a stack. We say that
$F$ is a \emph{derived $k$-analytic stack} if it is representable by a
geometric stack with respect to $(\dAfd_k, \tau_\et, \rmP_\sm)$.
\end{definition}

\begin{theorem}{\emph{\cite[Theorem 7.1]{porta2017representability}}} \label{thm:representability}
Let $F \in \dSt ( \dAfd_k, \tau_\emphet )$. The following assertions are equivalent:
\begin{enumerate}
\item $F$ is a geometric stack;
\item The truncation $\trun_{\leq 0} (F) \in \St(\Afd_k, \tau_\emphet )$ is geometric,
$F$ admits a cotangent complex and it is cohesive and nilcomplete.
\end{enumerate}
\end{theorem}

We shall review the main definitions:

\begin{definition}
Let $F \in \dSt \big( \dAfd_k, \tau_\et \big)$. We say that $F$ admits a \emph{global
analytic cotangent complex} if the following two conditions are verified:
\begin{enumerate}
\item Given $Z \in \dAfd_k$ and $z \colon Z \to F$ a morphism, the functor
	\[
		\mathrm{Der}^\an_F (Z, - ) \colon \Coh(Z) \to \cS,
	\]
given by the formula
	\[
		M \mapsto \fib_z \big( F(Z[M]) \to F(Z) \big),
	\]
is corepresented by an eventually connective object $\bL^\an_{F, z} \in 
\Coh(Z)$.
\item For any morphism $f \colon Z \to Z'$ of derived $k$-affinoid spaces and any
morphism $z' \colon Z \to F$ we have a canonical equivalence,
	\[
		f^* \bL^\an_{F, z'} \simeq \bL^\an_{F, z}
	\]
where $z \coloneqq z' \circ f$.
\end{enumerate}
\end{definition}

\begin{definition}
Let $F \in \dSt( \dAfd_k, \tau_\et )$. We say that $F$ is \emph{cohesive} or \emph{infinitesimally cartesian} if for every
$Z \in \dAfd_k$, every $\cF \in \mathrm{Coh}^{\geq 1}(Z)$
and any derivation	
	\[
		d \colon \bL^\an_X \to \cF,
	\]
the natural map
	\[
		F \big(Z_d[\cF[-1]] \big) \to F(Z) \times_{F(Z[\cF])} F(Z)
	\]
is an equivalence in the \infcat $\cS$.
\end{definition}

\begin{definition}
Let $F \in \dSt \big( \dAfd_k, \tau_\et \big)$. We say that $F$ is
\emph{convergent} if for every derived $k$-affinoid space $Z$ the canonical morphism,
	\[
		F(Z) \to \underset{n \geq 0}{\lim} F(\trun_{\leq n } Z),
	\]
is an equivalence in the \infcat $\cS$.
\end{definition}

\subsection{Deformation theory of $p$-adic continuous representations} In this \S, we define the moduli
of derived continuous $p$-adic representations of a profinite space $X$
and we show that it admits
a derived structure under mild assumptions on $X \in \pro(\cS^\fc)$.

\begin{definition}
Let $X \in \pro(\cS^\fc)$. The \emph{moduli of derived continuous $p$-adic representations}
of $X$ is defined as the right Kan extension, along the canonical inclusion functor
	\[
		j \colon \dAfd_k^{< \infty} \to \dAfd_k,
	\]
of the \emph{\'etale hyper-sheafification} of the moduli functor
	\[
		\PerfSys(X) \coloneqq \functor^\simeq \circ \bPerfSys(X) \colon \big(
		\dAfd^{< \infty}_k \big)^{\op}
		\to \cS.
	\]
\end{definition}

We have the following description:

\begin{lemma}
Let $Z \in \dAfd_k$. Then we have a natural equivalence
	\[
		\PerfSys(X)(Z) \simeq \lim_n \PerfSys(X)(\trun_{\leq n }Z ) 
	\]
in the \infcat $\cS$. In particular, the functor $\PerfSys(X)$ is nilcomplete.
\end{lemma}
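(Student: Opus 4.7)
The plan is to deduce the formula directly from the right Kan extension description of $\PerfSys(X)$, using the universal property of the canonical truncation to identify the relevant indexing category with $\bN$.

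First, I would unpack the definition: since $\PerfSys(X)$ is the right Kan extension along $j \colon \dAfd_k^{< \infty} \hookrightarrow \dAfd_k$, for any $Z \in \dAfd_k$ we have a canonical equivalence
    \[
        \PerfSys(X)(Z) \simeq \Map_{\Fun((\dAfd_k^{< \infty})^\op, \cS)} \bigl( h_Z \vert_{\dAfd_k^{< \infty}} ,\ \Perf_p(X)^\simeq \bigr),
    \]
where $h_Z = \Map_{\dAfd_k}(-, Z)$ is the presheaf represented by $Z$. This is just the adjoint form of the usual pointwise limit formula for right Kan extensions.

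Second, I would establish the key identification
    \[
        h_Z \vert_{\dAfd_k^{< \infty}} \simeq \colim_{n} h_{\trun_{\leq n} Z} \vert_{\dAfd_k^{< \infty}}
    \]
as presheaves on $\dAfd_k^{< \infty}$. Evaluated at $W \in \dAfd_k^{< \infty}$, $m$-truncated for some $m \geq 0$, this amounts to the statement that the natural map
    \[
        \colim_n \Map_{\dAfd_k}(W, \trun_{\leq n} Z) \longrightarrow \Map_{\dAfd_k}(W, Z)
    \]
is an equivalence, which follows from the universal property of the canonical truncation: for $n \geq m$ the postcomposition with $\trun_{\leq n} Z \to Z$ induces an equivalence $\Map_{\dAfd_k}(W, \trun_{\leq n} Z) \simeq \Map_{\dAfd_k}(W, Z)$, so the filtered colimit stabilizes. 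This universal property in $\dAfd_k$ reduces to the universal property of $\trun_{\leq n}$ in the \infcat of derived $k$-analytic rings, where $n$-truncated objects form a reflective subcategory.

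Finally, combining the two previous steps with the fact that mapping spaces turn colimits in the first variable into limits, together with the Yoneda lemma, yields the chain of equivalences
    \[
        \PerfSys(X)(Z) \simeq \Map \bigl( \colim_n h_{\trun_{\leq n} Z},\ \Perf_p(X)^\simeq \bigr) \simeq \lim_n \Map \bigl(h_{\trun_{\leq n} Z},\ \Perf_p(X)^\simeq \bigr) \simeq \lim_n \PerfSys(X)(\trun_{\leq n} Z).
    \]
The ``in particular'' statement is then tautological, as this equivalence is precisely the definition of nilcompleteness for a presheaf on $\dAfd_k^\op$. The only real technical point is verifying the universal property of truncation in $\dAfd_k$, which I expect to be standard in the Porta--Yu Yue framework.
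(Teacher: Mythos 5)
Your proof is correct, and it takes a route that differs from the paper's in its bookkeeping, though it rests on the same key fact. The paper computes the right Kan extension at $Z$ as a limit over the comma category of truncated derived $k$-affinoid spaces equipped with a map to $Z$, and reduces this limit to the subdiagram of truncations $\{\trun_{\leq n}Z \to Z\}_{n \geq 0}$ by a cofinality argument: using the dual of \cite[Theorem 4.1.3.1]{lurie2009higher}, the problem becomes showing that the associated comma categories are weakly contractible, which the paper deduces from the observation that they are cofiltered. You instead use the adjoint description of the right Kan extension to identify $\PerfSys(X)(Z)$ with a mapping space out of the restricted representable presheaf $h_Z|_{\dAfd_k^{<\infty}}$, and then prove the presheaf-level decomposition $h_Z|_{\dAfd_k^{<\infty}} \simeq \colim_n h_{\trun_{\leq n}Z}|_{\dAfd_k^{<\infty}}$ by verifying that the filtered colimit of mapping spaces stabilizes termwise. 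Both arguments ultimately reduce to the same universal property of truncation — that a map $W \to Z$ with $W$ $m$-truncated factors uniquely through $\trun_{\leq n}Z \to Z$ once $n \geq m$ — but your formulation replaces the invocation of the cofinality criterion with a pointwise stabilization observation, at the modest cost of having to spell out the adjoint form of the Kan extension and appeal twice to Yoneda. Either route is acceptable; yours is perhaps slightly more self-contained since it avoids the weak-contractibility criterion for cofinality.
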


\begin{proof}
We shall denote by
\[\cT_Z \coloneqq (\dAfd_k^{< \infty} )^{\mathrm{op}}_{Z / },\]
and define $\cT_Z'$ the full subcategory of $\cT_Z$ spanned
by those objects of the form $\trun_{\leq n }Y \to Z$, for each $n \geq 0$. By the end
formula for right Kan extensions, cf. \cite[4.3.2.17.]{lurie2009higher}, it suffices to show that the inclusion functor
	\[
		\cT'_Z \to \cT_Z,
	\]
is a final functor. Thanks to the dual statement of \cite[Theorem 4.1.3.1]{lurie2009higher} we are reduced to show that for every $(Y \to Z)^\op$ in $\cT_Z$, the \infcat
	\[
		(\cT_Z')_{/ Y} ,
	\]
has weakly contractible enveloping groupoid. We can identify the \infcat $(\cT'_Z)_{/ Y}$ with
the \infcat of factorizations of the morphism $(Y \to Z)^\op$. Thanks to the universal property
of the functor $\tau_{\le n}$ combined with the fact that $Y $ is truncated it follows
that there exists a sufficiently large integer $m$ such that 
	\[
		(Y \to X)^\op,
	\]
factors uniquely as 
	\[
		( Y \to \trun_{\leq m} X \to X)^\op.
	\]
Therefore the \infcat $(\cT'_X)_{/ Y}$ is cofiltered and thus weakly contractible, as desired.
\end{proof}



\begin{proposition}
The functor $\PerfSys(X) \colon \dAfd_k^\op \to \cS$ is cohesive.
\end{proposition}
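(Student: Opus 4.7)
The plan is to reduce cohesiveness of $\PerfSys(X)$ to cohesiveness of $\Perf^\ad(X)$, already established in \cref{nil:coh}, by transporting the property across the identification $\Perf_p(X)(A) \simeq \Perf^\ad(X)(A) \otimes_{\Ok} k$ in $\Cat^{\st, \idem}$ furnished by \cref{homotopy2}.

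First, I would use the nilcompleteness of $\PerfSys(X)$ established in the lemma above, together with the commutation of limits with finite limits, to reduce to showing that the restriction of $\PerfSys(X)$ to $(\dAfd_k^{<\infty})^\op$ preserves cohesive pullback squares. The derived Raynaud localization theorem \cite[Theorem~4.4.10]{antonio2018p}, combined with \cref{label}, allows lifting any such square in $\dAfd_k^{<\infty}$ to a cohesive square in $(\adCAlg)^{<\infty}$ by choosing compatible formal models. The task becomes showing that the composite $A \mapsto \Perf_p(X)(A)^\simeq$, evaluated on the lifted square, produces a pullback of spaces.

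Next, I would invoke \cref{homotopy2} to identify $\Perf_p(X)(A)$ with $\Perf^\ad(X)(A) \otimes_{\Ok} k$ in $\Cat^{\st, \idem}$. Since $\Perf^\ad$ is cohesive, it sends the lifted square to a pullback in $\Cat$. The base change $-\otimes_{\Ok} k$ can be computed in $\Cat^{\st, \idem}$ as a filtered colimit along multiplication by $p$ on mapping spectra, and such filtered colimits commute with the finite limits appearing in a cohesive pullback. Consequently, $\Perf_p(X)$ sends the lifted square to a pullback in $\Cat^{\st, \idem}$. Passing to groupoid cores preserves the limit, as $(-)^\simeq$ is a right adjoint, yielding the required pullback of spaces.

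The hard part will be the careful justification that $-\otimes_{\Ok} k$ commutes with the relevant pullbacks in $\Cat^{\st, \idem}$; the essential point reduces to verifying that localizing mapping spectra at $p$ inside $\ind(\Sp_{\mathrm{pro}}(p))$ and then materializing commutes with finite limits, which ultimately rests on the commutation of filtered colimits with finite limits in $\Sp$ together with the enrichment formalism set up earlier. A secondary subtlety is the compatibility between cohesive squares in $\dAfd_k^{<\infty}$ and cohesive squares in $(\adCAlg)^{<\infty}$ under the choice of formal models, which is guaranteed by \cref{label} and the derived Raynaud theorem but should be verified in detail.
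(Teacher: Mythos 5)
Your approach diverges from the paper's, and it has a genuine gap.

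The paper reduces, via nilcompleteness of $\PerfSys(X)$, to showing that $\Perf_p(X)$ is infinitesimally cartesian on truncated algebras; it then lifts a $k$-analytic derivation to a formal derivation $d'\colon \bL^\ad_A\to M'$ using \cite[Theorem A.2.1]{antonio2018p}, and concludes from \cref{inf:cart}, which asserts that the \emph{enriched} functor $\underline{\Perf}\colon\adCAlg\to\cE\Cat$ is infinitesimally cartesian. The passage from $\underline{\Perf}$ to $\Perf_p(X)=\cC\mathrm{ont}\Fun(X,\underline{\Perf}(-))$ only uses that $\underline{\Fun}(X,-)$ and the materialization functor preserve finite limits; crucially, no hypothesis on $X$ beyond connectedness is needed. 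You instead keep $X$ inside, appeal to \cref{nil:coh} for cohesiveness of $\Perf^\ad(X)$ over $\adCAlg$, and then try to transfer this along the identification $\Perf_p(X)(A)\simeq(\Perf^\ad(X)(A)\otimes_{\Ok}k)^\idem$ provided by \cref{homotopy2}.

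This is where the gap lies. \cref{homotopy2} requires $X$ to be $p$-cohomologically compact, and the lifting arguments behind it (\cref{homotopy1}) depend essentially on that hypothesis. The proposition you are proving is stated for an arbitrary connected $X\in\pro(\cS^\fc)$, so invoking \cref{homotopy2} silently strengthens the hypotheses. The paper's route through $\underline{\Perf}$ bypasses the lifting machinery altogether and so remains valid in the stated generality. A second, more technical issue: your commutation argument (filtered colimits along multiplication by $p$ commute with finite limits) establishes that the comparison functor from $\Perf^\ad(X)(A_{d'}[M'])\otimes_{\Ok}k$ to the pullback of the localized categories is fully faithful on mapping objects, but it does not by itself give essential surjectivity — after inverting $p$ new equivalences appear in the middle term of the pullback square, so the pullback of the localizations can contain objects with no preimage before localization. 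The paper deals with precisely this point inside the proof of \cref{inf:cart} by checking essential surjectivity after applying $\Mat_{\mathrm{cat}}$ and reducing to the known cohesiveness of $\Perf$ on $\CAlg_k$ via \cite[Proposition 3.4.10]{lurie2012dag}; your sketch needs a comparable argument (or an explicit appeal to the idempotent completion plus the retract structure of \cref{homotopy2}) and as written this step is not justified.
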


\begin{proof}
The right adjoint $\functor^\simeq \colon \Cat \to \cS$ commutes with small limits and in particular with finite limits. Moreover, $\PerfSys(X)$ is nilcomplete, thus we can restrict
ourselves to prove the assertion when restricted to truncated objects.
As a consequence, it suffices to show that the functor
	\[
		\bPerfSys (X) \colon \dAfd^{< \infty}_k \to \Cat
	\]
is infinitesimally cartesian. Let $Z \in \dAfd_k^{< \infty}$ and let $d \colon \bL^\an_Z \to M$ be a $k$-analytic derivation of $Z$, and $M \in \Coh(Z)$. Thanks to
\cite[Theorem A.2.1]{antonio2018p} we can lift $d$ to a formal derivation
	\[
		d ' \colon \bL^\ad_A \to \mathfrak M ,
	\]
in $\Coh(A)$, where $A \in (\adCAlg)^{< \infty}$ satisfies
	\[
		(\Spf A)^\rig \simeq Z.
	\]
For this reason $Z_d[M]$ can be obtained as the rigidification $\Spf( A_{d'}[\mathfrak M])^\rig \in \dAfd_k$.
Therefore, the canonical functor
	\[
		\bPerfSys(X)(A_{d'}[\mathfrak M] \otimes_{\kc} k) \to \bPerfSys(X)( A \otimes_{\kc} k) \times_{ \bPerfSys(X)((A \oplus \mathfrak{M}) \otimes_{\kc} k) } \bPerfSys(X)(A \otimes_{\kc} k)
	\]
is an equivalence, which is an immediate consequence of \cref{inf:cart}.
\end{proof}

We now proceed to study the associated tangent complex of $\PerfSys(X)$. In order to do so, we will need to state and prove the following technical result:

\begin{proposition} \label{use}
Let $F \in \St ( \adCAlg, \tau_\emphet, \rmP_\sm )$ and denote by $F^\rig \in
\St( \dAfd_k,, \tau_\emphet, \rmP_\sm )$ its rigidification. Suppose that $F$ admits an
adic cotangent
complex, $\bL^\ad_{F, x}$, at a point
	\[
		x \colon \Spf(A) \to F,
	\]
then $F^\rig$ admits an analytic cotangent complex, $\bL^\an_{F^\rig, x^\rig}$, at the rigidification
	\[
		x^\rig \colon \Spf(A)^\rig \to F^\rig.
	\]
Moreover, we have a canonical equivalence
	\[
		(\bL^\ad_{F, x})^\rig \simeq \bL^\an_{F^\rig, x^\rig},
	\]
in the \infcat $\Coh(\Spf(A)^\rig)$.
\end{proposition}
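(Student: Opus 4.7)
The plan is to verify directly the universal property of the cotangent complex for $F^\rig$ at $x^\rig$, taking $(\bL^\ad_{F,x})^\rig \in \Coh(Z)$ as the candidate and reducing the analytic derivation problem to the adic one via formal models. Fix $N \in \Coh(Z)$ together with an analytic derivation $d \colon \bL^\an_Z \to N$, and write $Z_d[N]$ for the corresponding analytic square-zero extension of $Z$ by $N$. I need to show that
\[
\fib_{x^\rig}\!\bigl(F^\rig(Z_d[N]) \to F^\rig(Z)\bigr) \simeq \Map_{\Coh(Z)}\bigl((\bL^\ad_{F,x})^\rig,\, N\bigr),
\]
naturally in $(N,d)$.

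First I would lift the input data to the adic side. By the analogue of \cite[Proposition A.2.1]{antonio2018p} for almost perfect modules, there exists $M \in \Coh(A)$ together with an equivalence $M^\rig \simeq N$; and by the corresponding derivation-lifting statement (cf.\ the proof of \cref{label}), one can further lift $d$ to an adic derivation $d' \colon \bL^\ad_A \to M$ with $(d')^\rig \simeq d$. These lifts are not unique, but any two differ by a generically strong morphism in $\adCAlg$. Compatibility of the rigidification functor with trivial square-zero extensions (since rigidification is symmetric monoidal and commutes with finite limits) then gives a canonical equivalence
\[
\Spf(A_{d'}[M])^\rig \simeq Z_d[N]
\]
in $\dAfd_k$, so $A_{d'}[M]$ is a formal model for the analytic square-zero extension.

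Next I would unwind the definition of rigidification of stacks to write
\[
F^\rig(Z_d[N]) \simeq \colim\, F(\Spf B),
\]
where the colimit runs over formal models $B$ of $Z_d[N]$ (equivalently, over the cofiltered diagram of generically strong refinements of $A_{d'}[M]$). The existence of the adic cotangent complex for $F$ at $x$ identifies, for each such $B$ of the form $A'_{d''}[M']$, the fiber
\[
\fib_x\!\bigl(F(\Spf A'_{d''}[M']) \to F(\Spf A')\bigr) \simeq \Map_{\Coh(A')}\!\bigl(\bL^\ad_{F,x}\!\otimes_A A',\, M'\bigr).
\]
Passing to the filtered colimit over formal models, and invoking the invariance of $\Map_{\Coh(-)}$ under generically strong equivalences (in the spirit of \cref{invariance} and \cref{non_dep}), these mapping spectra assemble into $\Map_{\Coh(Z)}((\bL^\ad_{F,x})^\rig, N)$, using the compatibility of rigidification of coherent sheaves with internal $\Hom$.

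The main obstacle I anticipate is the coherent bookkeeping of the two colimits involved: on the one hand the colimit over formal models computing $F^\rig$, and on the other hand the filtered colimit defining $\Map_{\Coh(Z)}((\bL^\ad_{F,x})^\rig, N)$ as a $p$-localization of $\Map_{\Coh(A)}(\bL^\ad_{F,x}, M)$. One must check that these two colimit processes are cofinally indexed by the same diagram of adic lifts $(A', M', d'')$, so that the cotangent-complex isomorphism for $F$ survives passage to the rigid generic fiber. Once this cofinality is established (which amounts to showing that the class of generically strong morphisms in $\adCAlg$ between square-zero extensions is generated by those arising from refinements of the pair $(A,M)$), the remaining verification is formal and yields the desired equivalence $(\bL^\ad_{F,x})^\rig \simeq \bL^\an_{F^\rig, x^\rig}$.
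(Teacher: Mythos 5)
Your strategy and your candidate $(\bL^\ad_{F,x})^\rig$ agree with the paper's, and the shape of the argument — lift the analytic derivation to an adic one, identify $\Spf(A_{d'}[M])^\rig$ with $Z_d[N]$, and express $F^\rig$ on the square-zero extension as a colimit over formal models — is the right one. However, there is a genuine gap at precisely the step you flag and then defer: the comparison of the colimit computing $F^\rig(Z_d[N])$ with the colimit defining $\Map_{\Coh(Z)}((\bL^\ad_{F,x})^\rig, N)$. You describe this as a cofinality check ``about generically strong morphisms between square-zero extensions'' and say the rest is formal, but that is not something one can wave through; the mechanism by which the adic mapping spaces assemble to the analytic one is a structural fact about $\rigg$, and without it the claimed universal property is unverified.

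The paper resolves exactly this point with a specific tool you do not invoke: the adjunction $\rigg \colon \ind(\Coh(A)) \rightleftarrows \ind(\Coh(Z)) : (-)^+$ coming from \cite[Proposition A.2.1]{antonio2018p}, in which $\rigg$ is an accessible localization and $(-)^+$ is fully faithful. From this one gets
\[
\Map_{\Coh(Z)}\big((\bL^\ad_{F,x})^\rig, N\big) \simeq \Map_{\ind(\Coh(A))}\big(\bL^\ad_{F,x}, N^+\big),
\]
and since $\bL^\ad_{F,x}$ is a compact object of $\ind(\Coh(A))$ while $N^+ \simeq \colim_{M \in \Coh(A)_{/N}} M$, the mapping spectrum commutes with this filtered colimit. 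The remaining colimit over formal models $A'$ of $Z$ is then reorganized using the observation that a formal model for $Z_d[N]$ is the same as an admissible formal model $A'$ of $Z$ together with a formal model for $N$ over $A'$; this is the precise cofinality statement you needed, and with it in place the interchange of filtered colimits with finite limits does the rest. Supplying the adjunction/compactness argument in place of the acknowledged gap would make your proof complete and essentially identical to the paper's.
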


\begin{proof}
The existence of $\bL^\ad_{F, x}$ implies that for every $M \in \Coh(A)$ we have functorial
equivalences
	\[
		\Map_{\Coh(A)} \big( \bL^\ad_{F, x}, M \big) \simeq
		\fib_x \big( F(A \oplus M ) \to F(A) \big).
	\]
Thanks to \cite[Theorem 3.7]{Antonio_Porta_Non_archimedean_Hilbert} the \infcat $\Coh(\Spf(A)^\rig)$ is a Verdier quotient
of $\Coh(A)$. Furthermore, it follows from \cite[Lemma 3.14]{Antonio_Porta_Non_archimedean_Hilbert} and its proof
that the we have an adjunction
	\[
	\begin{tikzcd}
		\rigg \colon \ind \big( \Coh(A) \big) \arrow[r, shift left] & \ind \big( \Coh(X) \big) \colon
		\functor^+ \arrow[l, shift left],
	\end{tikzcd}
	\]
between presentable \infcats, where $\rigg$ is an accessible localization functor and
$\functor^+$ is consequently fully faithful. We have an equivalence of mapping spaces
	\[	
		\Map_{\Coh(X)} \big( \bL^\rig_{F, x}, M \big) \simeq \Map_{\ind (\Coh(A))}
		\big(\bL_{F, x}, (M^{\rig})^+ \big).
	\]
Since $(M^{\rig})^+ \in \ind ( \Coh(A)) $ we can write it as a filtered colimit
	\begin{align*}
		(M^{\rig})^+ &  \simeq \colim_i M_i  \\
		 	& \simeq \colim_{M \in \Coh(A)_{/ M^\rig}} M,
	\end{align*}
where the last equivalence follows by adjunction. Since $\bL_{F, x}$ is a compact object of $\ind(\Coh(A))$, we have
	\[
		\Map_{\ind(\Coh(A))} \big( \bL_{F, x}, (M^{\rig})^+\big) \simeq 
		\colim_i \Map_{\Coh(A)} \big( \bL_{F, x}, M_i \big).
	\]
Let $N \in \Coh( \Spf(A)^\rig)$. Put $X = \Spf(A)^\rig$. We have a chain of equivalences 
	\begin{align*}
		\colim_{A ' \in (\adCAlg)_{/ X} } \colim_{M_i \in \Coh(A')_{/ N}}
		\fib_{x'} \big( F(A' \oplus M_i ) \to F(A') \big) & \simeq \\
		&\simeq \colim_{A' \in (\adCAlg)_{/ X}} \colim_{M_i \in \Coh(A')_{/ N}}
		\Map_{\Coh(A')} \big(\bL_{F, x}, M_i \big) \\
		& \simeq
		\colim_{A ' \in (\adCAlg)_{/ X}} \Map_{\ind(\Coh(A'))} \big( \bL_{F, x}, N^+ \big) \\
		& \simeq \colim_{A' \in (\adCAlg)_{/ X}} \Map_{\Coh(X)} \big(\bL^\rig_{F, x}, N \big) \\
		& \simeq \Map_{\Coh(X)} (\bL^\rig_{F, x}, N \big).
	\end{align*}
Both colimit indexing \infcats are filtered and $x'$ denotes the composite
	\[
		x \colon \Spf A' \to \Spf A \to F.
	\]
Notice that in the above colimit diagrams it
suffices to consider only colimits indexed by the full subcategories of formal models for $X$
and lying under $A$. Furthermore, we have an equivalence
	\[
		\colim_{A ' \in (\adCAlg)_{/ X} } \colim_{M_i \in \Coh(A')_{/ N}}
		\fib_{x'} \big( F(A' \oplus M_i ) \to F(A') \big) \simeq
		\colim_{\cC} \fib_{x'} \big( F(A' \oplus M') \to F(A') \big),
	\]
where $\cC$ denotes the \infcat of admissible formal models for $X[N]$. This last assertion
follows from the observation that a formal model for $X[N]$ consists of the given of an admissible
formal model for $X$ together with a formal model for $N$. We have thus a chain of natural equivalences
	\begin{align*}
		\colim_{\cC} \fib_{x'} \big( F(A' \oplus M') \to F(A') \big) & \simeq \\
		& \simeq \fib_{x^\rig} \colim_{\cC} \big( F(A' \oplus M') \to F(A') \big) \\
		& \simeq \fib_{x^\rig} \big( F^\rig(X[N]) \to F^\rig(X) \big).
	\end{align*}
Thus, we obtain
	\begin{equation*} \label{16}
		\fib_{x^\rig} \big( F^\rig(X[N]) \to F^\rig(X) \big) \simeq \Map_{\Coh(X)} \big( 
		\bL^\an_{F^\rig, x^\rig} , N \big)
	\end{equation*}
as desired. 
\end{proof}

\begin{notation}
	Let $Z \in \dAfd_k$ and $\rho \in \bPerfSys(X)(Z)$. Suppose we are given $M \in \Coh(Z)$, we shall denote by
		\begin{align*}
			\bT_{\PerfSys(X), \rho} (M) & \coloneqq \fib( \PerfSys(X)(Z[M]) \to \PerfSys(X)(Z) ) \\
								  & \in \cS,
		\end{align*}
	the \emph{tangent space of $\PerfSys(X)$ at $\rho$ and $M$}. When $M = \rmR \Gamma(Z, \cO_Z) [1]$, we shall simply denote \[\bT_{\PerfSys(X), \rho }\coloneqq \bT_{\PerfSys(X), \rho} (\rmR \Gamma(Z, \cO_Z)[1]).\]
\end{notation}

Let $X \in \pro(\cS^\fc)$ denote a $p$-adically compact profinite space. We can compute its tangent complex as follows:

\begin{proposition} \label{prop:tangent_complex_of_PerfSys}
Let $Z \in \dAfd_k$ and $M \in \Coh(Z)$. Suppose we are given a morphism
	\[
		\rho \colon Z \to \PerfSys(X).
	\]
Then we have a natural equivalence of almost perfect $\rmR \Gamma(Z, \cO_Z)$-modules,
	\begin{equation} \label{eq:formula for the tangent}
		\bT_{\PerfSys(X), \rho}(M) \simeq \Map_{\bPerfSys(X)(Z) } \big( \mathbf{1},  \rho \otimes \rho^\vee) \otimes M[1] .
	\end{equation}
Moreover, the $\rmR \Gamma(Z, \cO_Z)$-module $\underline{\Map}_{\bPerfSys(X)(Z) } \big( \mathbf{1},  \rho \otimes \rho^\vee) $ is equivalent to a perfect $\rmR \Gamma(Z, \cO_Z)$-module.
\end{proposition}

Before giving the proof of \cref{prop:tangent_complex_of_PerfSys} we need to introduce a few preliminary considerations:

\begin{notation} Let $A \in \adCAlg$.
	We shall denote by $\mathbf{triv}_A \in \bPerfSys(X)(A)$ the unit of the symmetric monoidal \infcat $\bPerfSys(X)(A)^\otimes$. In particular, given any morphism $f \colon A \to B$ in $\adCAlg$, we have that the base change functor
		\[
			f^* \colon \bPerfSys(X)(A) \to \bPerfSys(X)(B),
		\]
	can be explicitly described by the formula
		\[
			\rho \in \bPerfSys(X)(A) \mapsto \rho \otimes_{\triv_A} \triv_B \in \bPerfSys(X)(B).
		\]
\end{notation}

Let $A \in \adCAlg$ and $M \in \Perf(A)$. Before proving \cref{prop:tangent_complex_of_PerfSys} we shall need a preliminary lemma:

\begin{proposition} \label{split}
Let
$\rho \in \bPerfSys(X)(A)$ and $\rho ' \in \bPerfSys(X)(A \oplus M )$ together with a morphism
	\[
		\theta \colon \rho ' \to \rho \otimes_{\triv_A} \triv_{ A \oplus M},
	\]
in the \infcat $\bPerfSys(X)(A \oplus M )$.
Assume further that $\theta$ becomes an equivalence, after base change along the canonical morphism
	\[
		A \oplus M \to A.
	\]
Then $\theta $ is an equivalence in the \infcat $\bPerfSys(X)(A \oplus M )$.
\end{proposition}

\begin{proof}
It suffices to prove the result in the case where 
	\[
		\theta \colon \rho' \to \rho \otimes_{\mathbf{triv}_A} (\mathbf{triv}_{A \oplus M} ),
	\]
coincides with the identity morphism
	\[
		\id \colon \rho \otimes_{\mathbf{triv}_A} \mathbf{triv}_{A \oplus M} \to \rho \otimes_{\mathbf{triv}(A)} \mathbf{triv}_{A \oplus M}.
	\]
Consider the cofiber $\cofib(\theta) \in \bPerfSys(X)(A \oplus M )$. The latter is a
dualizable object in the \infcat $\bPerfSys(X)(A \oplus M )$. Moreover, its image in $\bPerfSys(X)(A)$ is equivalent
to the zero object. We wish to prove that the coevaluation morphism
	\[
		\mathrm{coev} \colon \mathbf{triv}_{A \oplus M} \to \cofib(\theta) \otimes \cofib(\theta)^\vee,
	\]	
is the zero map in the \infcat $\bPerfSys(X)(A)$. Consider the inclusion morphism
	\[
		\mathbf{triv}_A \to \mathbf{triv}_{A \oplus M} ,
	\]
in $\Mod_A$. By naturality of the tensor product functor, we obtain a commutative diagram of the form
	\[
	\begin{tikzcd}
		\mathbf{triv}_A \ar{r}{\mathrm{coev} \otimes \mathbf{triv}_A } \ar{d} & (\cofib(\theta) \otimes \cofib(\theta)^\vee) \otimes \mathbf{triv}_A \ar{d} 
		\\
		\mathbf{triv}_{A \oplus M} \ar{r}{\mathrm{coev}} & \cofib(\theta) \otimes \cofib(\theta)^\vee.
	\end{tikzcd}
	\]
Notice that $\mathrm{coev} \otimes \mathbf{triv}_A$ corresponds to the coevaluation morphism of the dualizable object
	\[
		\cofib(\theta) \otimes_{\mathbf{triv}_{A \oplus M} }\mathbf{triv}_A \in \bPerfSys(X)(A).
	\]
Thus it must coincide with the identity morphism
	\[
		\cofib(\theta) \otimes \mathbf{triv}_A \to \cofib(\theta) \otimes \mathbf{triv}_A.
	\]
The latter is the zero morphism, by our assumption on $\theta$. It thus follows that the $A$-linear morphism
	\[
		\mathbf{triv}_A \to \cofib(\theta) \otimes \cofib(\theta)^\vee,
	\]
is the zero morphism. By adjunction, the coevaluation map
	\[
		\mathrm{coev} \colon \mathbf{triv}_{A \oplus M} \to \cofib(\theta) \otimes \cofib(\theta)^\vee
	\]
is the zero morphism, as well. Thus $\cofib(\theta) \simeq 0 $ in the \infcat $\bPerfSys(X)(A \oplus M )$,
as desired.
\end{proof}

\begin{proof}[Proof of \cref{prop:tangent_complex_of_PerfSys}]
We first observe that the derived Tate acyclicity theorem, cf. \cite[Theorem 3.1.]{Porta_Yue_derived_Hom}, implies that we have a canonical equivalence of \infcats
	\[
		\Coh(Z) \simeq \Coh( \rmR \Gamma(Z, \cO_Z)).
	\]
We will use this fact without further mention it.
Consider the right fibration of
spaces
	\[
		\lambda_M \colon \PerfSys(X)(Z[M]) \to \PerfSys(X)(Z),
	\]
which classifies a functor
	\[
		\lambda_M \colon \PerfSys(X)(Z) \to \cS,
	\]
whose value at $\rho \in \PerfSys(X)(Z)$ is equivalent to
	\[
		\bT_{\PerfSys(X), \rho} \in \cS.
	\]
Since the formula displayed in \eqref{eq:formula for the tangent} is closed under taking fibers of idempotent morphisms, we reduce ourselves, via \cref{homotopy2}, to the case where
	\[
		\rho \in \bPerfSys(X)(Z),
	\]	
is liftable. Thanks to the proof of \cite[Theorem 5.1]{thomason1990higher}, it follows that 
	\[
		N \coloneqq \pi^*(\rho) \in \Perf(\rmR \Gamma(Z, \cO_Z)),
	\]
admits a formal model $\mathfrak N \in \Perf(A)$, up to retract, for some derived $\kc$-adic algebra $A \in \adCAlg$ such that
	\[
		\Spf(A)^\rig \simeq Z.
	\]
Again by the closure of \eqref{eq:formula for the tangent} under retractions combined with the fact that $\Perf(\rmR \Gamma(Z, \cO_Z))$ is idempotent complete, we are reduced to the case where
$N $ admits a formal model $\mathfrak N \in \Perf(A)$. Let $\mathfrak M \in \Coh(A)$ denote a formal model for $M \in \Coh(\rmR \Gamma(Z, \cO_Z))$. Thanks to second assertion in \cref{homotopy2} it suffices to prove the statement of \label{prop:tangent_complex_of_PerfSys}
in the $A$-adic case.
Observe further that
\cref{split} and its proof allow us to identify the left hand side of \eqref{eq:formula for the tangent} with the morphism
	\begin{align*}
		\Omega( p_M(\rho))& \simeq  \fib_{\id_\rho} \big(\Map_{\bPerfSys(A \oplus \mathfrak M )} \big( \rho \otimes (A \oplus \mathfrak M),
		  \rho \otimes (A \oplus  \mathfrak M) \big) \\
		   & \to \Map_{\bPerfSys(X)(A)} (\rho, \rho).
	\end{align*}
We shall denote the right hand side simply by $\Map_{/ \rho} \big( \rho \otimes_A (A \oplus \mathfrak M ), \rho \otimes_A (A \oplus
\mathfrak M) \big)$. Since the underlying $A$-module of $\rho \otimes_A (A \oplus  \mathfrak M )$ can be identified with $\rho \oplus 
\rho \otimes_A \mathfrak M$, we have a chain of natural equivalences of mapping spaces
	\begin{align*}
		\Map_{/ \rho} \big( \rho \otimes_A (A \oplus \mathfrak  M ), \rho \otimes_A (A \oplus \mathfrak M) \big) & \simeq \\
		& \simeq \Map_{\bPerfSys(X)(A)_{/ \rho}} \big( \rho \otimes_A (A \oplus \mathfrak M ), \rho \otimes_A (A \oplus \mathfrak M) 
		\big) \\
		& \simeq \Map_{\bPerfSys(X)(A)_{/ \rho}} \big( \rho, \rho \otimes_A (A \oplus \mathfrak M) 
		\big) .
	\end{align*}
Notice that the latter mapping space is pointed at the zero morphism. Since $\rho \in \bPerfSys(X)(A)$ is a dualizable
object we have an equivalence of mapping spaces
	\[
		\Map_{\PerfSys(A)} \big( \rho , \rho \otimes_A \mathfrak M \big) \simeq \Map_{\PerfSys(A)} \big(
		\rho \otimes \rho^\vee, \mathfrak M \big).
	\]
Consider the pullback diagram of derived $\Ok$-adic algebras
	\[
	\begin{tikzcd}
		A \oplus \mathfrak M \ar{r} \ar{d} & A \ar{d} \\
		A \ar{r} &  A \oplus \mathfrak M[1]
	\end{tikzcd}.
	\]
As $\bPerfSys(X)$ is cohesive and the right adjoint $\functor^\simeq \colon \Cat \to
\cS$ commutes with limits we obtain a pullback diagram of the form
	\[
	\begin{tikzcd}
		\PerfSys(X)(A \oplus \mathfrak M ) \ar{r} \ar{d} & \PerfSys(X)(A) \ar{d} \\
		\PerfSys(X) (A) \ar{r} & \PerfSys(A \oplus \mathfrak M [1] ).
	\end{tikzcd},
	\]
in $\cS$.
By taking the corresponding fibers at $\rho \in \bPerfSys(X)(A)$, we obtain a pullback diagram of spaces
	\[
	\begin{tikzcd}
		\bT_{\PerfSys(X), \rho}  (\mathfrak M) \ar{r} \ar{d} & * \ar{d} \\
		* \ar{r} & \bT_{\PerfSys(X), \rho}( \mathfrak M[1] )
	\end{tikzcd}	.
	\]
By our previous computations, replacing $M$ with the shift $M[1]$ produces the chain of
equivalences
	\begin{align*}
		p_M(\rho) & \simeq \Omega ( p_{\mathfrak M[1]} (\rho)) \\
		& \simeq \Map_{\bPerfSys(X)(A)} \big( \rho \otimes \rho^\vee, \mathfrak M [1] \big),
	\end{align*}
as desired.
\end{proof}

\begin{remark} Let $\rho \in \PerfSys(X)(Z)$, with $Z \in \dAfd_k$.
The above proposition implies that the tangent complex $\mathbb T^\mathrm{an}_{\PerfSys(X)}$ can be computed as the (derived) global sections
	\[
		\mathbb T^\mathrm{an}_{\PerfSys(X), \rho} \simeq \rmR \Gamma_\cont(X, \mathrm{Ad}(\rho))[1],
	\]
where the right hand side computes the continuous cohomology of $X$ with coefficients in $\Ad(\rho)$. This recovers the usual definition whenever $X = \rmB G$, where $G$ is a profinite group.
\end{remark}

In order to prove that $\PerfSys(X)$ admits a cotangent complex (dual to its tangent complex) one must impose further finiteness conditions on $X \in \pro(\cS^\fc)$:

\begin{definition}
Let $X \in \pro(\cS^\fc)$ be a connected profinite space. Let $A \in \adCAlg$ and $\rho \in \bPerfSys(X)(A)$.
We say that $X$ is \emph{locally $p$-adic cohomological perfect at $\rho$} if the enriched mapping $A$-module
	\[
		\Map_{\bPerfSys(X)(A)} \big( \mathbf{triv}_A , \rho \big) \in \Sp,
	\]
is a perfect $A$-module. We say that $X$ is \emph{$p$-adically cohomological perfect}
if it is locally $p$-adically cohomological perfect for every $A \in \adCAlg$ and $\rho \in \bPerfSys(X)(A)$.
\end{definition}

\begin{remark} \label{rem:p-adic_cohomological_perfect_on_each_reduction}
	Let $X \in \pro(\cS^\fc)$ be a connected profinite space. Let $A \in \adCAlg$ and $\rho \in \bPerfSys(X)(A)$. By $p$-adic completeness, in order to check that $X$ is locally $p$-adically cohomology perfect at $\rho$ it suffices to prove that, for every $n \ge 1$,
		\[
			\Map_{\bPerfSys(X)(A_n) }\big( \mathbf{triv}_{A_n}, \rho \otimes_{\mathbf{triv}_A} \mathbf{triv}_{A_n} \big) 
		\]
	is a perfect $A_n$-module.
\end{remark}
Under such assumption on $X$ we shall prove that the functor $\PerfSys(X) \colon \dAfd^\op_k \to \cS$ admits a well defined (global) cotangent complex. We first need an auxiliary lemma: 

\begin{lemma}  \label{lem:auxiliary_lemma_concerning_base_change_of_mapping_spaces_for_compact_objects} Let $A$ be a derived ring.
	Let $\cC$ be an $A$-linear stable presentable \infcat and $C \in
	\cC$ a compact object of $\cC$. Then for every object $M \in \Mod_A$, we have an equivalence of enriched mapping objects
		\[
			\mathbf{Map}_\cC \big(C, M \otimes \mathbf{1}_\cC \big) \simeq \mathbf{Map}_\cC \big( C, \mathbf 1_\cC \big) \otimes_A M,
		\]
	in the \infcat $\Mod_A$. Here $\mathbf 1_\cC \in \cC$ denotes the unit for the symmetric monoidal structure on $\cC$.
\end{lemma}

\begin{proof}
Let $\cD \subseteq \Mod_A$ denote the full subcategory spanned by those $A$-modules $M$ such that the
assertion holds true. Clearly $A \in \cD$. Since $A \in \Mod_A$ generates the \infcat $\Mod_A$ under
small colimits and shifts, it suffices to show that $\cD$ is closed under small colimits and shifts. Stability of $\cD$ under shifts is clear. Suppose now that 
	\[
		M \simeq \colim_{i \in I} M_i,
	\]
where $M_i \in \cD$ and $I$ is a filtered \infcat. Then by our compactness assumption it follows that
we have a chain of equivalences
	\begin{align*}
		\mathbf{Map}_\cC(C, M ) & \simeq \Map_\cC \big(C, \colim_i M_i \otimes \mathbf{1}_\cC \big) \\
		& \simeq \colim_i \mathbf{Map}_\cC \big( C, M_i \otimes \mathbf{1}_\cC \big) \\
		& \simeq \colim_i \mathbf{Map}_\cC \big(C, \mathbf 1_\cC \big) \otimes_A M_i \\
		&\simeq  \mathbf{Map}_\cC \big(C, \mathbf 1_\cC \big) \otimes_A (\colim_i M_i ) \\
		& \simeq \mathbf{Map}_\cC \big( C, \mathbf 1_\cC \big) \otimes_A M.
	\end{align*}
Thus $\cD$ is closed under filtered colimits. It suffices to show that $\cD$ is closed under finite
colimits. Since $\Mod_A$ is a stable \infcat it suffices to show that $\cD$ is closed under finite coproducts and
cofibers. Let 
	\[
		f \colon C \to D
	\]
be a morphism in $\cD$, we wish to show that $\cofib(f ) \in \cD$. Thanks to \cite[Theorem 1.1.2.14]{lurie2012higher}
we have an equivalence 
	\[
		\cofib(f) \simeq \fib(f)[1].
	\]
As a consequence, we can write 
	\begin{align*}
		\Map_\cC \big( C, \cofib(f) \big) & \simeq \Map_\cC \big( C, \fib(f) [1] \big) \\
		& \simeq \fib \big( \Map_\cC \big(C, f \big) \big)[1] \\
		& \simeq \cofib \big( \Map_\cC \big(C, f \big) \big) \\
		& \simeq \Map_\cC \big(C, \mathbf 1_\cC \big) \otimes_A \cofib(f).
	\end{align*}
The case of coproducts follows along the same lines and it is easier. From this we conclude that 
	\[
		\cD \simeq \Mod_A,
	\]
as desired. 
\end{proof}

\begin{proposition} \label{computation}
Let $X \in \pro(\cS^\fc)$ be a $p$-adically cohomological perfect profinite space. Then for every $Z \in \dAfd_k$
and every $\rho \in \bPerfSys(X)(Z)$ the $\rmR \Gamma(Z, \cO_Z)$-module
	\[
		\mathbf{Map}_{\bPerfSys(X)(Z)} \big( \mathbf{triv}_Z, \rho \otimes \rho^\vee [1] \big)^\vee \in \Mod_A,
	\]
is perfect and it corepresents the functor $F \colon \Coh(Z) \to \cS$, given by the formula
	\[
		M \in \Coh(Z) \mapsto \fib_\rho \big( \bPerfSys(X)(Z[ M ] )\to \bPerfSys(X)(Z) \big) \in \cS.
	\]
\end{proposition}

\begin{proof} The above formula is closed under Postnikov towers, so we must only prove the assertion in the case where $Z \in \dAfd_k$ is a eventually truncated derived $k$-affinoid space. Let $A \in \adCAlg$ be a derived $\kc$-adic algebra such that 
	\[
		\Spf(A)^\rig \simeq Z,
	\]
in $\dAfd_k$. Suppose first that $\rho \in \PerfSys(X)(Z)$ is liftable. And let $\rho' \in \bPerfSys(X)(A)$ lifting such $\rho$.
Notice that \cref{homotopy2} implies the existence of a natural equivalence
	\[
		\Map_{\bPerfSys(X)(Z)}(\mathbf{triv}, \rho \otimes \rho^\vee [1]) \simeq \colim_{\mathrm{mult} \ \mathrm{by} \ p} \Mat \big( \mathbf{Map}_{\bPerfSys(X)(A)}( \mathbf{triv}_A, \rho' \otimes (\rho')^\vee[1])  \big),
	\]
For this reason, in the case where $\rho$ is liftable, we reduce ourselves to prove the statement of the lemma for a lift $\rho' \in \bPerfSys(X)(A)$.
Denote by 
	\[
		\mathbf{ModSys}_{p}(X)(A) \coloneqq \ind(\bPerfSys(X)(A)).
	\]
The latter is a presentable stable \infcat which is furthermore $A$-linear. 
\cref{lem:auxiliary_lemma_concerning_base_change_of_mapping_spaces_for_compact_objects} implies that,
for every $M \in \Mod_A$, we have a chain of equivalences
	\begin{align*}
		\mathbf{Map}_{\mathbf{ModSys}_{p}(X)(A)} \big( \rho \otimes \rho^\vee, M [1] \big) & \simeq \mathbf{Map}_{\mathbf{ModSys}_{p}(X)(A)}
		\big(  \rho \otimes \rho^\vee , \mathbf{triv}_A [1] \big) \otimes_A M \\
		& \simeq \mathbf{Map}_{\mathbf{ModSys}_{p}(X)(A)} \big( \mathbf{triv}_A, \rho \otimes \rho^\vee [1] \big) \otimes_A M.
	\end{align*}
Furthermore, it follows by our hypothesis on $X$ that
	\[
		C \coloneqq \mathbf{Map}_{\mathbf{ModSys}_{p}(X)(A)} \big(\mathbf{triv}_A, \rho \otimes \rho^\vee[1] \big) \in \Mod_A,
	\]
is a perfect $A$-module. Assemblying the above considerations together we thus obtain a chain of natural equivalences
	\begin{align*}
		\Map_{\Mod_A} \big(C^\vee, M \big) & \simeq \Map_{\Mod_A} \big( A, C \otimes_A M \big) \\
		& \simeq \Map_{\bPerfSys(X)(A)} \big( \mathbf{triv}_A , \rho' \otimes (\rho')^\vee[1]) \otimes_A M,
	\end{align*}
and the result now follows by the explicit formula for the tangent complex provided in \cref{prop:tangent_complex_of_PerfSys}. For a general $\rho \in \PerfSys(X)(Z)$, \cref{homotopy2} implies that
	\[
		\mathbf{Map}_{\bPerfSys(X)(Z)} \big( \mathbf{triv}_{\rmR \Gamma(Z, \cO_Z)}, \rho \otimes \rho^\vee [1] \big)^\vee,
	\]
is a retract of some $\mathbf{Map}_{\bPerfSys(X)(Z)} \big( \mathbf{triv}_{\rmR \Gamma(Z, \cO_Z)}, \rho' \otimes (\rho ')^\vee [1] \big)^\vee$, for some liftable $\rho'$. The result now follows from the fact that perfect $\rmR \Gamma(Z, \cO_Z)$-modules are closed under retracts.
\end{proof}

\subsection{\'Etale cohomology of perfect local systems} As we saw in the previous section in order to construct the cotangent complex of $\PerfSys(X)$ one needs to require $X$ to be $p$-adically perfect. In this \S, we will show that such condition
is always verified in the case where $X$ is the \'etale homotopy type of a smooth and proper scheme.

Let us fix notations: from now on we shall denote by $X$ a geometrically connected proper and smooth scheme over an algebraically closed field $K$.
Let 
	\[
		\pi_X: X \to \mathrm{Spec} K
	\]
denote the structural morphism. For each integer $n \geq 1$, we have a canonical equivalence of $\infty$-categories 
	\[
		\mathrm{Shv}( \mathrm{Spec} K, \mathbb{Z}/ p^{n} 
		\mathbb{Z}) \simeq \mathrm{Mod}_{\mathbb{Z}/ p^n \mathbb{Z}}.
	\] 
We have a pullback functor 
	\[
		p^* : \mathrm{Shv}_{\text{\'et}}( \mathrm{Spec} K, \mathbb{Z} / p^n \mathbb{Z} ) \to  \mathrm{Shv}_{\text{\'et}}( X, \mathbb{Z} / p^n \mathbb{Z} ) ,
	\]
which associates to each $\mathbb{Z}/ p^n \mathbb{Z}$-module $M$ the \'etale
constant sheaf on $X$ with values in $M$.

\begin{proposition} \label{finiteness_et_coho}
	Let $X$ be a proper normal scheme over an algebraically closed field $K$. Then $\mathrm{R} \Gamma (X_{\acute{\mathrm{e}} \mathrm{t}}, \mathbb{Z}/ p^n \mathbb{Z})$ is a perfect complex of $\mathbb{Z} / p^n \mathbb{Z}$-modules.
\end{proposition}

\begin{proof}
	This is a direct consequence of the more general results \cite[Proposition 4.2.15]{gaitsgory2014weil} in the $\ell \neq p$ case and \cite[Theorem 5.1]{scholze2013p} in the $p$-adic case.
\end{proof}

\begin{definition}
	Let $A$ be a derived ring. We say that $A$ is \emph{Noetherian} if it satisfies the following conditions:
	\begin{enumerate}
		\item $\pi_0(A)$ is a Noetherian ring;
		\item For each $i \geq 0$, $\pi_i(A)$ is an $\pi_0(A)$-module of finite type. 
	\end{enumerate}
\end{definition}

\begin{definition}
	Let $A$ be a derived ring and $M \in \Mod_A$ an $A$-module. We say that $M$ has \emph{tor-amplitude $\leq n$} if, for every discrete $A$-module $N$, the homotopy groups 
		\[
			\pi_i(M \otimes_A N)  \in \Mod_A
		\] 
	vanish for every integer $i >n $.
\end{definition}

\begin{lemma} \label{tor_ampl_perf}
Let $A$ be a Noetherian simplicial ring and $M \in \mathrm{Mod}_A$ be a connective $A$-module. Then $M$ is a perfect $A$-module if and only if the following two conditions are satisfied:
\begin{enumerate}
\item For each $i$, $\pi_i(M)$ is of finite type over $\pi_0(A)$;
\item $M$ is of finite Tor-dimension.
\end{enumerate}
\end{lemma}
\begin{proof}
	This is \cite[Proposition 7.2.4.23]{lurie2012higher}.
\end{proof}

\begin{lemma}[Projection Formula] \label{lem:proj_formula}
Let $X$ be a scheme over an algebraically closed field $K$. Let $A$ be a simplicial $\bZ/ p^n \bZ$-algebra and $\mathcal{F} \in \mathrm{Shv}_{\emphet}( X, A )$. Then, for any $M \in \mathrm{Mod}_A$, we have a natural equivalence,
	\[
		\pi_*( \mathcal{F} \otimes_A \pi^*( M) ) \simeq \pi_*(\mathcal{F}) \otimes_A M,
	\]
in the derived $\infty$-category $\mathrm{Mod}_A$, where $\pi$ denotes the structural morphism $\pi \colon X \to \Spec K$.
\end{lemma}

\begin{proof}
Let $\mathcal{C} \subset \mathrm{Mod}_{A}$ be the full subcategory spanned by those $A$-modules $M$ such that there exists a canonical equivalence \[\mathrm{R} \Gamma( X_{\text{\'et}}, M) \simeq \mathrm{R} \Gamma(X_{\text{\'et}}, A) \otimes_{A} M.\] It is 
clear that $A \in \mathcal{C}$ and $\mathcal{C}$ is closed under small colimits, since both tensor product and the direct image functor $\pi_*$ commute with small colimits. Furthermore, the fact that the $\infty$-category $\mathrm{Mod}_A
$ is compactly generated by the unit object $A$ implies the result.
\end{proof}

\begin{definition} Let $A \in \CAlg_{\bZ/ p^n \bZ}$ be a simplicial $\bZ/ p^n \bZ$-algebra.
	Let $\cF \in \Shv(X_{\et}, A)$. We say that $\cF$ is a \emph{local system of perfect $A$-modules on} $X_\et$ if there exists an \'etale covering $f \colon Y \to X$ such that
		\[
			f^*(\cF) \simeq \pi_Y^* (N),
		\]
	where $N \in \Mod_A$ is a perfect $A$-module and $\pi_Y \colon Y \to \Spec K$ denotes the structural morphism.
\end{definition}

\begin{remark}
Let $A$ be a derived $\mathbb{Z}/ p^n \mathbb{Z}$-algebra and let $N \in \Shv(X_{\text{\'et}}, A)$ be a local system of perfect $A$-modules on $X_{\text{\'et}}$. Thanks to \cite[Proposition 4.2.2]{gaitsgory2014weil} it follows that $N$
can be written as a (finite sequence) of retracts of the form
	\[
		f_{!}(A) \in \Shv(X_\et, A ) ,
	\]
where
	\[
		f_! \colon \Shv( Y_{\text{\'et}}, \mathbb{Z}/ p^n \mathbb{Z}) \to \Shv(X_{\text{\'et}}, \mathbb{Z}/ p^n \mathbb{Z}),
	\]
denotes the exceptional direct image functor associated to a given \'etale map $f \colon Y \to X$, see \cite[Example 4.1.10]{gaitsgory2014weil} for a definition of the latter. We will not need this result in our study.
\end{remark}

\begin{remark}
	Thanks to \cite[Remark  4.1.6]{gaitsgory2014weil} the \infcat $\Shv_{\et}(X, \bZ / p^n \bZ)$ admits a canonical t-structure.
\end{remark}

\begin{lemma} \label{lem:t-exactness_of_etale_pullback}
	Let $f \colon Y \to X $ denote an \'etale morphism. Then the pullback functor \[f^* \colon \Shv_{\emph{\et}}(X, \bZ/ p^n \bZ) \to \Shv_{\emph{\et}}(Y, \bZ / p^n \bZ),\] is t-exact.
\end{lemma}

\begin{proof}
	Let $\cF \in \Shv_{\et}(X, \bZ/ p^n \bZ)^\heartsuit$ denote a discrete \'etale sheaf on $X$. Then $f^*(\cF) \in \Shv_{\et}(Y, \bZ/ p^n \bZ)$ is on objects by the formula
		\[
			(U \to Y) \in Y_{\et} \mapsto \cF(U) \in \Mod_{\bZ/ p^n \bZ}.
		\]
	It is then clear that $f^*(\cF)$ is a discrete \'etale sheaf on $Y$, as well. The claim now follows.
\end{proof}

\begin{proposition} \label{perfect_etale}
Let $A \in \CAlg_{\mathbb{Z}/ p^n \mathbb{Z}}$ be a Noetherian simplicial $\mathbb{Z}/ p^n \mathbb{Z}$-algebra.
Let $\cF$ be a local system of perfect $A$-modules on $X_{\acute{\mathrm{e}}\mathrm{t}}$. Then the (derived) global sections 
$ \mathrm{R} \Gamma ( X_\emphet, \cF) \in \Mod_A$, is a perfect $A$-module.
\end{proposition}

\begin{proof}
Let $\cF$ be a local system of perfect $A$-modules on $X_{\text{\'et}}$. Our goal is 
to show that 
	\[
		\mathrm{R} \Gamma(X_{\text{\'et}}, \cF) \in \Mod_A,
	\]
lies in the full subcategory $\Perf(A)$.
By \cref{tor_ampl_perf} it suffices to show that, for each $i \in \mathbb{Z}$, the cohomology groups
	\[
		H^{-i}(X_{\text{\'et}}, \cF) := \pi_i(\mathrm{R} \Gamma(X_{\text{\'et}}, \cF)) ,
	\]
are finite
$\pi_0(A)$-modules and 
	\[
		\mathrm{R} \Gamma(X_{\text{\'et}},\cF) \in \Mod_A,
	\]
is of finite Tor-amplitude over $A$. Up to performing a suitable shift of $\rmR \Gamma(X_\et, \cF)$,
we can assume that $\cF$ is a connective perfect $A$-module on $X_{\text{\'et}}$. In order to prove the claim,
we will directly verify the conditions of \cref{tor_ampl_perf}. We start by verifying condition i) in loc. cit.:

Let $f \colon Y \to X$ be an \'etale morphism such that
	\[
		f^*(\cF) \simeq \pi^*_Y(N),
	\]
for some $N \in \Perf(A)$ and let $\pi_Y \colon Y \to \Spec K$ denote the structural morphism. Thank to \cref{lem:t-exactness_of_etale_pullback}, for each $i \ge 0$, one has a chain of natural equivalences
	\begin{align*}
		f^*(\pi_i(\cF))	 & \simeq \pi_i(f^*(\cF)) \\
					 & \simeq \pi_i(\pi^*_Y(N)) \\
				 	 & \simeq \pi_Y^*(\pi_i(N)),
	\end{align*}
where the last equivalence follows from \cite[Warning 4.1.8]{gaitsgory2014weil} and the definition of
	\[
		\pi_Y^* \colon \Mod_{\bZ/ p^n \bZ} \to \Shv_{\et}(X, \bZ / p^n \bZ).
	\]
For this reason, $\pi_i(\cF)$ is still a local system on $X_\et$.
As $X$ 
is smooth it is in particular a normal scheme. Consequently, we can assume without loss of generality, that the \'etale map $f \colon Y 
\to X$ is a Galois covering. 
It follows, by Galois descent, that we have a natural equivalence
	\[
		\mathrm{R} 
		\Gamma(X_{\text{\'et}}, \pi_i(\cF)) \simeq \mathrm{R} \Gamma(G, 
		\mathrm{R} \Gamma_{\mathrm{grp}}( Y_{\text{\'et}}, \pi_Y^*(\pi_i(N)))) ,
	\]
of $A$-modules,
where $G$ denotes the finite 
group of automorphisms of the Galois covering $f \colon Y \to X$ and $\mathrm{R} \Gamma_{\mathrm{grp}}(G, -)$ the functor of group cohomology associated to $G$.

Assume 
first that $\mathrm{R} \Gamma(Y_{\text{\'et}},\pi_Y^*(\pi_i(N)))$ is an $A$-module whose homotopy groups are finitely generated over $\pi_0(A)$. Granting this assumption, it follows by the fact that
$G$ is finite, that the group cohomology of $G$ with $
\mathbb{Z}$-coefficients is finitely generated and of torsion. By analyzing the corresponding Hochschild-Serre spectral sequence
	\begin{equation} \label{eq:Hochschild_Serre_sp}
		\pi_j( \rmR \Gamma_{\mathrm{grp}}(G, \pi_i(\rmR \Gamma(Y_\et, \pi_Y^*(N)))) \Rightarrow \pi_{i+j}(\rmR \Gamma_{\mathrm{grp}}(G, \rmR \Gamma(Y_\et, \pi_Y^*(N)))),
	\end{equation}
we deduce that the homotopy groups 
	\[
		\pi_i(\mathrm{R} \Gamma(G, \mathrm{R} \Gamma( V_{\text{\'et}}, \pi_Y^*(N))) ,
	\]
are finitely generated over $\pi_0(A)$, as well. The above reasoning allow us to reduce to the case where $\cF $ is constant, i.e., 
	\[
		\cF \simeq \pi_X^*(N),
	\]
with $N \in \Perf(A)$. By the projection formula,
\cref{lem:proj_formula}, we have that
	\begin{equation} \label{eq:proj_formula_applied_to_constant_coefficients}
		\rmR \Gamma(X_\et, \pi_X^*(\pi_i(N))) \simeq \rmR \Gamma(X_\et, \pi_0(A)) \otimes_{\pi_0(A)} \pi_i(N).
	\end{equation}
Thus to prove finiteness of the left hand side of \eqref{eq:proj_formula_applied_to_constant_coefficients} it suffices to prove it in the case where $\pi_0(A) \simeq \pi_i(N)$.
By the same reasoning, we have that 
	\[
		\rmR \Gamma(X_\et, \pi_0(A)) \simeq \rmR \Gamma(X_\et, \bZ/ p^n \bZ) \otimes_{\bZ / p^n \bZ} \pi_0(A).
	\]
Thus, to prove the claim, we are reduced to know that $\rmR \Gamma(X_\et, \bZ/ p^n \bZ)$ has finitely generated homotopy groups. The latter claim is now a consequence of \cref{finiteness_et_coho}.

Consider now the fiber sequence associated to the $n$-th step of the Postnikov tower for $\cF$,
	\[
		 \tau_{\leq n+1} \cF \to \tau_{\leq n } \cF \to  \pi_{n+1} (\cF) [n+2].
	\]
By induction we conclude that, for every $n \geq 0$, both complexes
	\[
		\mathrm{R} \Gamma( X_{\text{\'et}}, \tau_{\leq n} \cF) \quad \text{and } \mathrm{R} \Gamma( X_{\text{\'et}}, \pi_{n+1} (\cF)) [n+2] \in \Mod_A
	\]
have finitely generated homotopy $\pi_0(A)$-modules. Since the functor $
\mathrm{R} \Gamma(X_{\text{\'et}}, -)$ is exact, it follows that also the complex 
	\[
		\mathrm R \Gamma(X_{\textrm{\'et}}, \tau_{\leq n+1} \cF ),
	\]
has finite type homotopy groups, as well.
Consider now the fiber sequence
	\[
		\tau_{> n } \cF \to \cF \to \tau_{ \leq n } \cF,
	\]
of \'etale sheaves.
The \'etale site of $X$ is of finite cohomological dimension, cf.\cite[Tag 095U, Lemma 88.2]{destacks}. It thus follows that, for any given integer $i$, there is a sufficiently large integer $n$ such that 
	\[
		\pi_i(\mathrm{R} \Gamma(X_{\textrm{\'et}}, \tau_{>n} (\cF)) \in \Mod_{\pi_0 (A)},
	\]
vanishes. Therefore, by exactness of $\rmR \Gamma(X_\et, - )$ we conclude that
\[\pi_i(\rmR \Gamma(X_\et, \cF) ) \simeq \pi_i(\rmR \Gamma(X_\et, \tau_{\le n}(\cF)),\] for some sufficiently large $n$. We conclude thus that $\pi_i(\rmR \Gamma(X_\et, \cF ))$ is a finitely generated $\pi_0(A)$-module, for every $i \in \bZ$.
 This establishes condition (i) of \cref{tor_ampl_perf}, in general. In order to establish the Lemma we are left to prove that
 	\[
		\rmR \Gamma( X_\et, \cF),
	\]
is a finite tor-amplitude $A$-module.
Let $M \in \Mod_A$ denote a discrete $A$-module, (which can be naturally considered as a $\pi_0(A)$-module).
By\cite[Tag 095U, Lemma 88.2]{destacks}, it follows that the $A$-module 
	\[
		\pi_{X, *}  \pi_X^*(M) \simeq \rmR \Gamma(X_\et, \pi_X^*(M)),
	\]
has non-zero homotopy groups lying in a fixed finite set of indices. Let $f \colon Y \to X$ be a Galois covering of $X$, of finite group $G$ of automorphisms, such that 
	\[
		f^*(\cF) \simeq \pi_Y^*(N),
	\]
for some $N \in \Perf(A)$. Moreover, we can always assume $Y$ to be connected (otherwise, we take a connected component of $U$ mapping surjectively onto $X$). Then, by means of the projection formula, we conclude that
	\begin{align*}
		\rmR \Gamma(Y_\et, f^*(\cF)) \otimes_A M & \simeq \rmR \Gamma(Y_\et, f^*(\cF) \otimes_A \pi_Y^*(M)) \\
										  & \simeq \rmR \Gamma(Y_\et, \pi_Y^*(N) \otimes_A \pi_Y^*(M)),
	\end{align*}
can be obtained by a finite sequence of retracts and finite colimits of $\pi_{X, *}  \pi_X^*(M)$, since $N$ belongs to the full subcategory $\Perf(A) \subseteq \Mod_A$ generated by $A$ under finite colimits and retracts. By the previous paragraph, we conclude that
the non-zero homotopy groups of the $A$-module
	\[
		\rmR \Gamma(Y_\et, f^*(\cF) \otimes_A \pi_Y^*(M)),
	\]
lie in a finite set of indices, whose amplitude is bounded by $2\mathrm{dim} (X)$ plus the finite tor-amplitude of $N \in \Perf(A)$. We now observe that we have a sequence of natural equivalences
	\begin{align*}
		\rmR \Gamma(X_\et, \cF) \otimes_A M & \simeq \rmR \Gamma(X_\et, \cF \otimes_A \pi_X^*(M)) \\
									    & \simeq \rmR \Gamma_{\mathrm{grp}}(G, \rmR \Gamma (Y_\et,f^*( \cF )\otimes_A  \pi_Y^*(M))),
	\end{align*}
of $A$-modules. Since group cohomology is a left t-exact functor it follows from the preceding paragraph that the above module is bounded on the left. We further deduce
that, for each $i $,
	\begin{align*}
		\rmR \Gamma_{\mathrm{grp}}(G, \pi_i( \rmR \Gamma (Y_\et,f^*( \cF )\otimes_A  \pi_Y^*(M)))) & \simeq \rmR \Gamma (X_\et, \cF \otimes_A \pi_X^*(M)) \\
																			       & \simeq \rmR \Gamma(X_\et, \cF ) \otimes_A M
	\end{align*}
is bounded on the right by the bound $2 \mathrm{dim}(X) + 1$, cf.\cite[Tag 095U, Lemma 88.2]{destacks}.  The result now follows by the previous discussion, in constant case, and by analyzing the associated Hochschild-Serre spectral sequence displayed in \eqref{eq:Hochschild_Serre_sp}.
\end{proof}

\begin{corollary}
	The profinite space $\Sh_\emph{\et}(X) \in \pro(\cS^\fc)$ is $p$-adically cohomological perfect.
\end{corollary}

\begin{proof}
	It is an immediate consequence of \cref{perfect_etale} together with \cref{rem:p-adic_cohomological_perfect_on_each_reduction}.
\end{proof}

\subsection{Main results}
Let $X$ be a geometrically connected proper and smooth scheme over an algebraically closed field.
To such $X$ we can associate it a profinite space, namely its \'etale homotopy
type
	\[
		\Sh^\et(X) \in \pro(\cS^\fc),
	\]
cf. \cite[\S 3.6]{2009derived}.
By construction, $\Sh^\et(X) \in \pro(\cS^\fc)$ classifies \'etale local systems on
$X$. Moreover, we have a canonical identification
	\[
		\pi_1 \big( \Sh^\et(X) \big) \simeq \pi_1^\et(X) ,
	\]
of profinite groups. Therefore, it is natural to consider the moduli stack
	\[
		\PerfSys \big(\Sh^\et(X) \big) \in \dSt \big(\dAfd_k, \tau_\et \big),
	\]
as a derived extension of the moduli $\LocSys(X) \in \St \big( \Afd_k, \tau_\et,
\rmP_\sm \big)$.

\begin{definition}
Let $\dLocSys(X) \subseteq \PerfSys \big( \Sh^\et(X) \big)
$ denote the substack spanned by continuous $p$-adic representations of
$\Sh^\et(X) \in \pro(\cS^\fc)$, with values in rank $n$ free modules.
\end{definition}

\begin{proposition} \label{t1}
We have a canonical equivalence of stacks
	\[
		\trun_{\leq 0} ( \dLocSys(X) ) \simeq \LocSys(X),
	\]
in the \infcat $\St( \Afd_k , \tau_\emphet )$.
\end{proposition}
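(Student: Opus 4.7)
The plan is to compute both sides on an arbitrary ordinary $k$-affinoid algebra $A \in \Afd_k \subseteq \dAfd_k$ and identify the resulting groupoids naturally in $A$. Since $\trun_{\leq 0}$ is obtained by restricting along the inclusion $\Afd_k \hookrightarrow \dAfd_k$ and then sheafifying for $\tau_\et$, and since both $\LocSys(X)$ and $\dLocSys(X)$ are \'etale hypersheaves, it suffices to exhibit a natural equivalence of groupoids
\[
\dLocSys(X)(A) \simeq \LocSys(X)(A)
\]
for every $A \in \Afd_k$, compatible with the \'etale topology.

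First I would unwind the definition of $\dLocSys(X)(A)$. By construction it is the subspace of $\PerfSys(\Sh^\et(X))(A) = \Perf_p(\Sh^\et(X))(\Gamma(Z))^\simeq$ spanned by those continuous $p$-adic representations whose underlying perfect $A$-module is free of rank $n$. The evaluation functor $\ev(*) \colon \Perf_p(\Sh^\et(X))(A) \to \Perf(A)$ of \cref{par} is a coCartesian fibration, and the fiber over the free module $A^n$ is, by \cref{par} and the connectedness of $\Sh^\et(X)$,
\[
\Map_{\Mon_{\bE_1}(\ind(\pro(\cS)))}\bigl(\Omega \Sh^\et(X),\ \cEnd(A^n)\bigr) \in \cS.
\]
Taking the groupoid core and then the quotient by the $\GLn(A)$-action corresponding to automorphisms of $A^n$ in $\Perf(A)^\simeq$ produces $\dLocSys(X)(A)$.

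The key computation is to identify $\cEnd(A^n) \in \Mon_{\bE_1}(\ind(\pro(\cS)))$ explicitly. Fix a formal model $A_0$ for $A$; then by \cref{rem:strt} one has
\[
\cEnd(A^n) \simeq \colim_{\text{mult.\ by }p}\ \underset{k\geq 1}{\lim}\ \End(A_0^n/p^k A_0^n),
\]
where each $\End(A_0^n/p^k A_0^n) \in \Mon_{\bE_1}(\cS)$ is a \emph{discrete} monoid (as $A$ is $0$-truncated). Hence $\cEnd(A^n)$ is the $0$-truncated ind-pro-monoid $\Mat_n(A)$ equipped with its canonical ind-pro-topology inherited from $A$. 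Since $\Omega \Sh^\et(X)$ is a loop object and the target is $0$-truncated, the mapping space factors through $\pi_0 \Omega \Sh^\et(X) \simeq \pi_1^\et(X)$, and by the limit-colimit formula for mapping spaces in $\ind(\pro(\cS))$ together with the Noohi property of $\GLn(A)$ proved in \cref{Noohi}, one identifies
\[
\Map_{\Mon_{\bE_1}(\ind(\pro(\cS)))}\bigl(\Omega \Sh^\et(X),\ \cEnd(A^n)\bigr)^{\mathrm{inv}} \simeq \Hom_\cont\bigl(\pi_1^\et(X),\ \GLn(A)\bigr),
\]
where the superscript $\mathrm{inv}$ picks out the components landing in the invertible part $\GLn(A) \subseteq \Mat_n(A)$, a condition that is forced since we are restricting to equivalences in $\Perf(A)$. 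The right hand side is exactly $\LocSysfr(\pi_1^\et(X))(A)$ by \cref{pr:Hom_G}.

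Finally, the groupoid structure on $\dLocSys(X)(A)$ above this set of objects is given by isomorphisms of the underlying free modules, i.e.\ by the conjugation action of $\GLn(A) = \anGLn(A)$ on $\Hom_\cont(\pi_1^\et(X), \GLn(A))$. By \cref{rep_def} this is precisely $[\LocSysfr(\pi_1^\et(X))/\anGLn](A) = \LocSys(X)(A)$. Naturality in $A$ and compatibility with \'etale pullbacks are immediate from the functoriality of all constructions involved. The main obstacle is the bookkeeping in the second step: pinning down that the enriched mapping object $\cEnd(A^n)$ in the ind-pro-enriched setting does recover the standard ind-pro-topology on $\Mat_n(A)$, and that the Noohi property of $\GLn(A)$ is what makes the passage from continuous homomorphisms of spaces to continuous group homomorphisms of topological groups lossless; once this is in place the rest is formal.
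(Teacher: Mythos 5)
Your proposal is correct and follows essentially the same route as the paper's proof: both reduce to the discrete case, use that $\cEnd(A^n)$ (resp.\ $\rmB\cEnd(A^n)$) is $0$-truncated (resp.\ $1$-truncated) to factor through $\pi_1^\et(X)$, identify the enriched mapping object with $\Hom_\cont(\pi_1^\et(X),\GLn(A))$, and then recover the stack quotient by $\anGLn$ --- the paper presents this last step via the bar construction geometric realization, while you present it as the $\GLn(A)$-action on the fiber of $\ev(*)$ over $A^n$, which is the same decomposition. One small correction: the Noohi property of \cref{Noohi} is not what makes the passage to $\Hom_\cont$ lossless --- that lemma concerns reconstructing $\GLn(A)$ from its category of $G$-sets and enters only in \cref{scholze&bhatt}; what you actually need is the limit-colimit formula for mapping spaces in $\ind(\pro(\cS))$ together with the fact that $\pi_1^\et(X)$ and $\cEnd(A^n)$ are discrete (ind-)pro-monoids, so that enriched homomorphisms coincide with topological homomorphisms as in \S 2.
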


\begin{proof}
Let $A \in \Afd_k$, then $\dLocSys(X)(\Sp (A))$ can be identified with the space
	\[
		\dLocSys(X)(\Sp(A)) \simeq \Map_{\Mon_{\bE_1}(\ind(\pro(\cS)))}
		\big( \Sh^\et(X), \rmB \cEnd(A) \big),
	\]
of continuous $\bE_1$-monoid like morphisms 
	\[
		\Sh^\et(X) \to \rmB \cEnd(A).
	\]
As $A \in \Afd_k$ is discrete, it follows that the underlying ind-pro-space of $\rmB \cEnd(A)$ is discrete. Therefore, by the universal property of $1$-st truncation we have
a chain of equivalences
	\begin{align*}
		\Map_{\Mon_{\bE_1}(\ind(\pro(\cS)))}
		\big(  \Sh^\et(X), \rmB \cEnd (A) \big) & \simeq \\
		& \simeq \Map_{\Mon_{\bE_1}(\ind(\pro(\cS)))}
		\big( \tau_{\leq 1} (\Sh^\et(X)), \rmB  \cEnd (A) \big) \\
		& \simeq \Map_{\Mon_{\bE_1}(\ind(\pro(\cS)))}
		\big( \rmB \pi_1^\et(X), \rmB \cEnd (A) \big) \\
		& \simeq \Map_{\Mon^{\mathrm{grp}}_{\bE_1}(\ind(\pro(\cS)))} \big( \rmB \pi_1^\et(X), 
		\rmB \GLn(A) \big) ,
	\end{align*}
where the last equivalence follows from the fact that $\pi_1^\et(X)$ is an actual group object in $\Mon_{\bE_1}(\ind(\pro(\cS)))$. Indeed,
every morphism $\pi_1^\et(X) \to \cEnd(A)$ factors through the
sub-group of units of $\cEnd(A)$. The latter coincides precisely with $\GLn(A)$, equipped with the induced $k$-analytic
topology. For this reason, we have a natural morphism
	\[
		\theta \colon \Hom_{\cont}(\pi_1^\et(X), \GLn(A)) \to \Map_{\Mon^{\mathrm{grp}}_{\bE_1}(\ind(\pro(\cS)))} \big( \rmB \pi_1^\et(X), 
		\rmB \GLn(A) \big),
	\]
obtained by sending each continuous morphism $f \colon \pi_1^\et(X) \to \GLn(A)$ to the corresponding morphism of $\bE_1$-monoid group like objects
	\[
		\rmB \pi_1^\et(X) \to \rmB \GLn(A),
	\]
in $\ind(\pro(\cS))$. Moreover, it is clear that the morphism $\theta$ is invariant under $\GLn(A)$-conjugation. Therefore, we obtain a well defined induced morphism
	\[
		\theta \colon \LocSys(X)(A) \to \Map_{\Mon^{\mathrm{grp}}_{\bE_1}(\ind(\pro(\cS)))} \big( \rmB \pi_1^\et(X), 
		\rmB \GLn(A) \big).
	\]
Let $\rho \in \Map_{\Mon^{\mathrm{grp}}_{\bE_1}(\ind(\pro(\cS)))} \big( \rmB \pi_1^\et(X), 
		\rmB \GLn(A) \big).$ Since $\GLn(A)$ is discrete, there exists a continuous group homomorphism $f \colon \pi_1^\et(X) \to \GLn(A)$ which realizes $\rho$, under $\theta$. Moreover, such choice only depends on the conjugacy class of $f$. The result follows.
\end{proof}

\begin{theorem} \label{t2}
The moduli stack $\dLocSys(X) \in \dSt(\dAfd_k, \tau_\emphet)$ admits a cotangent
complex. Given $\rho \in \dLocSys(X) ( Z)$, with $Z \in \dAfd_k$, we have a natural equivalence 
	\begin{align*}
		\bL^\an_{\LocSys(X), \rho} & \simeq C^*_\emphet(X, \Ad(\rho) ,\big)^\vee [-1],
	\end{align*}
in $\Mod_{\rmR \Gamma(Z, \cO_Z)}$, where $C^*_\emphet \big( X, \Ad(\rho) \big)$ denotes the \'etale cohomology of $X$
with coefficients in the adjoint representation
	\[
		\Ad (\rho) \coloneqq \rho \otimes \rho^\vee.
	\]
\end{theorem}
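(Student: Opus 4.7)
The plan is to deduce the existence and identification of the analytic cotangent complex of $\LocSys(X)$ from the analogous assertion for the larger moduli $\PerfSys(\Sh^\et(X))$, for which a cotangent complex was computed in the final corollary of the previous section. The first step is to verify that $\Sh^\et(X) \in \pro(\cS^\fc)$ satisfies the hypotheses of that corollary. Connectedness follows from connectedness of $X$. The $p$-cohomological compactness assertion reduces, by devissage over the Postnikov tower of $p$-torsion coefficients, to the finiteness of $H^i_\et(X, \bZ/p^n\bZ)$, which is classical for smooth proper $X$ over an algebraically closed field. The $p$-cohomological perfectness assertion follows from \cref{perfect_etale}: continuous cohomology of $\Sh^\et(X)$ with coefficients in a perfect local system agrees, by the universal property of the \'etale homotopy type, with $\rmR \Gamma(X_\et, -)$ of the corresponding perfect local system of $A$-modules.

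Next, I would argue that $\LocSys(X) \hookrightarrow \PerfSys(\Sh^\et(X))$ is representable by Zariski-open immersions, since the condition that the underlying perfect complex $\ev(*)(\rho) \in \Perf(\Gamma(Z))$ be locally free of rank $n$ is an open condition on a derived $k$-affinoid $Z$. Consequently, the cotangent complex of $\LocSys(X)$ at a point $\rho \colon Z \to \LocSys(X)$ is inherited from that of $\PerfSys(\Sh^\et(X))$. Combined with \cref{use}, which transports the corepresenting object from the formal to the derived $k$-analytic side, the corollary cited above produces
	\[
		\bL^\an_{\LocSys(X), \rho} \simeq \Map_{\Perf_p(\Sh^\et(X))(Z)}\bigl( 1, \rho \otimes \rho^\vee[1] \bigr)^\vee.
	\]

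The final step is to identify this mapping object with $C^*_\emphet(X, \Ad(\rho))[1]$. Since the unit $1$ of the symmetric monoidal structure on $\Perf_p(\Sh^\et(X))(Z)$ is the constant representation at $\Gamma(Z) \otimes_{\Ok} k$, the enriched mapping object $\Map(1, -)$ computes continuous \'etale global sections. Using the universal property of $\Sh^\et(X)$ as classifying \'etale local systems together with the pointwise nature of the tensor product in the enriched functor \infcat, the object $\rho \otimes \rho^\vee \in \Perf_p(\Sh^\et(X))(Z)$ corresponds to the adjoint local system $\Ad(\rho)$, so
	\[
		\Map_{\Perf_p(\Sh^\et(X))(Z)}\bigl( 1, \rho \otimes \rho^\vee[1] \bigr) \simeq C^*_\emphet(X, \Ad(\rho))[1].
	\]
Dualizing yields the claimed equivalence, and functoriality in $Z$ follows from the functoriality of both sides.

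The principal obstacle is the enriched identification in the last paragraph: one must show that the internal hom in the $\ind(\pro(\cS))$-enriched \infcat of continuous representations recovers the derived \'etale cohomology, handling carefully the $p$-adic completion and the passage through formal models. The lifting theorem \cref{homotopy2} reduces this to the analogous statement for $\Perf^\ad(\Sh^\et(X))(A)$ with $A$ a formal model of $\Gamma(Z)$, where it follows from \cref{perfect_etale} and the projection formula established there. A secondary technical point is ensuring that the open immersion $\LocSys(X) \hookrightarrow \PerfSys(\Sh^\et(X))$ is compatible with the derived structure; this reduces, via nilcompleteness, to the truncated setting, where local freeness of rank $n$ defines an open substack of the moduli of perfect modules.
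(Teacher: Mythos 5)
Your proposal follows essentially the same route as the paper's proof: both deduce $p$-cohomological compactness and perfectness of $\Sh^\et(X)$ from smoothness and properness of $X$, apply the cotangent-complex corollary for $\PerfSys(\Sh^\et(X))$ together with \cref{use}, identify the corepresenting object with $C^*_\emphet(X,\Ad(\rho))$ via the universal property of the \'etale homotopy type, and handle non-liftable $\rho$ by the retract argument from the lifting results. Your write-up simply makes explicit two points the paper leaves terse: that $\LocSys(X)\hookrightarrow \PerfSys(\Sh^\et(X))$ is open (the paper only says ``by restriction''), and the shift/dualization bookkeeping matching the tangent-side formula to the stated cotangent-side formula.
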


\begin{proof}
Since $X$ is smooth and proper over an algebraically closed field it follows that $\Sh^\et(X)$ is
$p$-cohomologically compact and $p$-cohomologically perfect. Therefore,
$\PerfSys(X)$ admits a cotangent complex and by restriction so does
$\LocSys(X)$. Moreover, the tangent complex of $\LocSys(X)$ at the morphism
	\[
		\rho \colon Z \to \LocSys(X)
	\]
can be identified with the mapping space
	\[
		\bT^\an_{\LocSys(X), \rho} \simeq \Map_{\bPerfSys(X)(Z)} \big(
		\mathbf{triv}_{\rmR \Gamma(Z, \cO_Z)}, \rho \otimes \rho \big) [1].
	\]
We are thus reduced to prove that 
	\[
	\Map_{\bPerfSys(X)(Z)} \big(
		\mathbf{triv}_{\rmR \Gamma(Z, \cO_Z)}, \rho \otimes \rho \big)[1] \simeq C^*_\et(X, \Ad(\rho) \big) [1].
	\]
But this follows by the universal property of $\Sh^\et(X)$ together with the fact
that global sections of local systems with torsion coefficients on $\Sh^\et(X)$
classify \'etale cohomology on $X$, with torsion coefficients, \cite[Theorem 4.3]{hoyois2018higher}. The result follows
now for liftable such $\rho$ and for general $\rho$ by \cref{homotopy1}.
\end{proof}

\begin{proposition} \label{t3}
The moduli stack $\dLocSys(X)$ is cohesive and nilcomplete.
\end{proposition}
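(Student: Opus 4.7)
The key observation is that $\LocSys(X)$ is defined as the full substack of $\PerfSys(\Sh^\et(X))$ cut out by requiring the underlying perfect module $\ev(*)(\rho) \in \Perf(\Gamma(Z) \otimes_{\Ok} k)$ to be free of rank $n$. Since $\PerfSys(\Sh^\et(X))$ has already been shown to be cohesive and nilcomplete in the previous subsection, the plan is to check that this defining condition is stable under the limit operations entering both properties, so that both assertions descend from $\PerfSys(\Sh^\et(X))$ to $\LocSys(X)$.

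For nilcompleteness, given $Z \in \dAfd_k$, I would form the commutative square comparing $\LocSys(X)(Z) \hookrightarrow \PerfSys(X)(Z)$ with its limit counterpart along $\{\trun_{\leq n} Z\}_n$. The right vertical arrow is an equivalence by nilcompleteness of $\PerfSys(\Sh^\et(X))$, and the horizontal arrows are inclusions of connected components. Since freeness of a perfect complex over a connective derived ring is detected on $\pi_0$, and $\pi_0(\Gamma(Z) \otimes_{\Ok} k) \simeq \pi_0(\Gamma(\trun_{\leq 0} Z) \otimes_{\Ok} k)$, the ``free of rank $n$'' condition on $Z$ agrees with the corresponding condition on each $\trun_{\leq n} Z$; hence the left vertical arrow is also an equivalence.

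For cohesiveness, given a derivation $d \colon \bL^\an_Z \to \cF$ with $\cF \in \Coh^{\geq 1}(Z)$, I would compare the natural map
\[
\LocSys(X)(Z_d[\cF[-1]]) \to \LocSys(X)(Z) \times_{\LocSys(X)(Z[\cF])} \LocSys(X)(Z)
\]
with its analogue for $\PerfSys(\Sh^\et(X))$, which is an equivalence by the previous proposition. The canonical morphism $\Gamma(Z_d[\cF[-1]]) \to \Gamma(Z)$ is a square-zero extension with connective fiber $\cF[-1]$, so after rigidification the induced map on $\pi_0$ is surjective with square-zero kernel. Freeness (of a fixed rank) of a perfect module lifts uniquely across such extensions, so the ``free of rank $n$'' locus on the source of the map corresponds exactly to the pullback of the corresponding loci on the target, and cohesiveness of $\LocSys(X)$ follows from that of $\PerfSys(\Sh^\et(X))$.

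The main obstacle I anticipate is making precise the assertion that $\LocSys(X) \hookrightarrow \PerfSys(\Sh^\et(X))$ is an inclusion of connected components---i.e.\ that ``free of rank $n$'' defines an open-and-closed substack of the moduli of perfect complexes. Once this is cleanly established (it reduces to the corresponding discrete statement over $\pi_0$, where the result is classical), both verifications become essentially bookkeeping on top of the already-established cohesiveness and nilcompleteness of $\PerfSys(\Sh^\et(X))$.
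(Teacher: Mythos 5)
Your proposal is correct and takes essentially the same approach as the paper, which simply asserts the result is a direct consequence of the analogous statement for $\PerfSys(\Sh^\et(X))$. You usefully spell out the step the paper leaves implicit: that the rank-$n$ freeness condition carving $\LocSys(X)$ out of $\PerfSys(\Sh^\et(X))$ is detected on $\pi_0$ (which is invariant under truncation and lifts uniquely along square-zero extensions), so the substack inclusion is a union of path components that is stable under the limits entering both nilcompleteness and cohesiveness.
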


\begin{proof}
This is a direct consequence of the analogous statement for $\PerfSys(X)$. 
\end{proof}

As a consequence we obtain our main result:

\begin{theorem} \label{thm:final_thm}
The moduli stack $\LocSys(X) \in \dSt \big(\dAfd_k, \tau_\emphet \big)$ is
representable by a derived $k$-analytic stack.
\end{theorem}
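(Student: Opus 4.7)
The plan is to apply the representability theorem of Porta--Yu Yue \cite[Theorem 7.1]{porta2017representability}, which provides a clean list of sufficient conditions for a derived prestack on $\dAfd_k$ to be representable by a derived $k$-analytic stack: namely, its $0$-truncation must be a geometric $k$-analytic stack, and the derived stack itself must admit a global analytic cotangent complex, be cohesive, and be convergent (i.e.\ nilcomplete). All four of these ingredients are essentially in hand from the preceding sections, so the final step is to package them together.

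More precisely, I would first invoke \cref{t1} to identify $\trun_{\leq 0}( \LocSys(X) )$ with the classical $k$-analytic stack $\LocSys(X)$ of \cref{rep_def}, which was shown to be a geometric $k$-analytic stack in \cref{thm:Theorem_1} (via the presentation as the stack quotient $[ \LocSysfr(\pi_1^\et(X))/ \anGLn]$ together with \cref{stack_quotient}). This takes care of the truncation hypothesis. Next I would invoke \cref{t3} to supply cohesiveness and nilcompleteness, noting that both properties for $\LocSys(X)$ were reduced to the corresponding statements for $\PerfSys \big( \Sh^\et(X) \big)$, which in turn are deduced from the cofiber-stability of the enriched constructions of \S4--\S5 and the formal model lifting theorems of \cite{antonio2018p}.

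The third ingredient, the existence of a global analytic cotangent complex, is provided by \cref{t2}: for any $\rho \colon Z \to \LocSys(X)$ with $Z \in \dAfd_k$, we have
\[
\bL^\an_{\LocSys(X), \rho} \simeq C^*_{\et}\big(X, \Ad(\rho)\big)^\vee[-1] \in \Coh(Z),
\]
and the functoriality in $Z$ (pullback compatibility) follows directly from the functoriality of $\Ad(\rho) = \rho \otimes \rho^\vee$ and of the $\ind(\pro(\cS))$-enriched mapping objects. The finiteness properties of $\Sh^\et(X)$ (namely $p$-cohomological compactness and $p$-cohomological perfectness, which hold because $X$ is smooth and proper over an algebraically closed field by \cref{finiteness_et_coho} and \cref{perfect_etale}) guarantee that $C^*_\et(X, \Ad(\rho))$ is a perfect $\Gamma(Z)$-module, so $\bL^\an_{\LocSys(X), \rho}$ is eventually connective as required.

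With these four conditions verified, the Porta--Yu Yue representability theorem applies directly and yields the conclusion. The main conceptual obstacle in the whole argument is not in this last assembly step but rather in the preparatory work of \S4--\S5, which was needed to make sense of the derived enhancement $\PerfSys(\Sh^\et(X))$ via $\ind(\pro(\cS))$-enriched \infcats and to prove \cref{homotopy1,homotopy2}, which allow one to reduce questions about arbitrary derived continuous representations to the liftable case (where formal model techniques and the results of \S3 on $\Perf^\ad(X)$ can be brought to bear); once this reduction is available, the verification of the four Porta--Yu Yue hypotheses for $\LocSys(X)$ proceeds uniformly from the corresponding statements for $\Perf^\ad(\Sh^\et(X))$ via rigidification (\cref{label}, \cref{use}).
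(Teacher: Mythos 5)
Your proposal follows exactly the paper's argument: cite the Porta--Yu Yue representability theorem and verify its hypotheses by assembling \cref{t1} (identification of the $0$-truncation with the geometric stack of \cref{thm:Theorem_1}/\cref{pr:Hom_G}), \cref{t2} (the cotangent complex), and \cref{t3} (cohesiveness and nilcompleteness). The added commentary on eventual connectivity and functoriality of $\bL^\an$, and on where the real work lies, is accurate but does not change the approach.
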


\begin{proof}
The proof follows by the Representability theorem, \cref{thm:representability}, together with \cref{t1}, \cref{pr:Hom_G},
\cref{t2} and \cref{t3}.
\end{proof}
\bibliography{Introduction}
\bibliographystyle{alpha}
\adress
\end{document}